\documentclass[11pt]{amsart}
\usepackage{amsmath,amsthm,amssymb,latexsym}
\setlength{\textheight}{22 cm} \setlength{\textwidth}{15 cm}
\setlength{\oddsidemargin}{0cm} \setlength{\topmargin}{0cm}
\setlength{\headheight}{1cm} \setlength{\marginparwidth}{6.5cm}

\newtheorem{teo}{Theorem}
\newtheorem*{teo*}{Theorem}
\newtheorem{co}[teo]{Corollary}
\newtheorem{lemma}[teo]{Lemma}
\newtheorem{prop}[teo]{Proposition}
\newtheorem*{sublema*}{Sublemma}

\theoremstyle{definition}

\theoremstyle{remark}

\newtheorem*{remark}{Remark}

\newcommand{\Hu}{{\mathcal H}^1}
\newcommand{\Rn}{{\mathbb R}^d}
\newcommand{\Rd}{{\mathbb R}^2}
\newcommand{\ep}{\varepsilon}
\newcommand{\N}{\mathbb{N}}

\newcommand{\op}{\operatorname{op}}
\newcommand{\id}{\operatorname{id}}
\newcommand{\pv}{\operatorname{p.v.}}
\newcommand{\cc}{{\mathcal C}}

\newcommand{\CC}{{\mathcal C}}

\newcommand{\diam}{{\rm diam}}

\newcommand{\ds}{\displaystyle }

\newcommand{\rf}[1]{{(\ref{#1})}}

\newcommand{\supp}{{\rm supp}}

\newcommand{\vphi}{{\varphi}}
\newcommand{\ve}{{\varepsilon}}

\newcommand{\wt}[1]{{\widetilde{#1}}}

\newcommand{\rest}{{\lfloor}}

\begin{document}
\title{Capacities associated with Calder\'on-Zygmund kernels}

\author{V. Chousionis, J. Mateu, L. Prat and X. Tolsa}
\thanks{Most of this work had been carried out in the first semester of 2011 while V.C was visiting the Centre de Recerca Matem\`atica in Barcelona and he feels grateful for the hospitality. V.C was supported by the Academy of Finland. J.M and L.P are supported by grants 2009SGR-000420 (Generalitat de Catalunya) and
MTM2010-15657 (Spain). X.T is supported by grants 2009SGR-000420 (Generalitat de Catalunya) and MTM2010-16232
(Spain).}

\address{Vasilis Chousionis.  Departament de
Ma\-te\-m\`a\-ti\-ques, Universitat Aut\`onoma de Bar\-ce\-lo\-na, Catalonia}

\email{vasileios.chousionis@helsinki.fi}

\address{Joan Mateu.  Departament de
Ma\-te\-m\`a\-ti\-ques, Universitat Aut\`onoma de Bar\-ce\-lo\-na, Catalonia}

\email{mateu@mat.uab.cat}

\address{Laura Prat.  Departament de
Ma\-te\-m\`a\-ti\-ques, Universitat Aut\`onoma de Bar\-ce\-lo\-na, Catalonia}

\email{laurapb@mat.uab.cat}
\address{Xavier Tolsa. Instituci\'{o} Catalana de Recerca
i Estudis Avan\c{c}ats (ICREA) and Departament de
Ma\-te\-m\`a\-ti\-ques, Universitat Aut\`onoma de Bar\-ce\-lo\-na.
08193 Barcelona, Catalonia} \email{xtolsa@mat.uab.cat}

\maketitle
\begin{abstract}
Analytic capacity is associated with the Cauchy kernel $1/z$ and the $L^\infty$-norm. 
For $n\in\mathbb{N}$, one has likewise capacities related to the kernels 
$K_i(x)=x_i^{2n-1}/|x|^{2n}$, $1\le i\le 2$, $x=(x_1,x_2)\in\Rd$. The main result of this paper states that the capacities associated with the vectorial kernel $(K_1, K_2)$ are 
comparable to analytic capacity.
\end{abstract}
\vspace{1cm}

The analytic capacity of a compact subset $E$ of the plane is
defined by
$$ \gamma(E)=\sup|f'(\infty)|$$
where the supremum is taken over those analytic functions in
$\mathbb{C}\setminus E$ such that $|f(z)|\leq 1$ for all  $z\in
\mathbb{C}\setminus E$.  Sets of zero analytic capacity are exactly
the removable sets for bounded analytic functions, as shown by Ahlfors, and thus $\gamma(E)$ quantifies the non-removability of $E$.
Early work on analytic capacity used basically one complex variable
methods (see, e.g., \cite{Ahlfors}, \cite{garnett} and \cite{Vi}).
Analytic capacity may be written as
\begin{equation}\label{acap2}
\gamma(E)=\sup |\langle T,1\rangle|,
\end{equation}
where the supremum is taken over all complex distributions $T$
supported on $E$ whose Cauchy potential $f=1/z * T$ is in the closed
unit ball of  $L^\infty(\mathbb{C})$. 
Expression \eqref{acap2} shows that analytic capacity is formally an
analogue of classical logarithmic capacity, in which the logarithmic
kernel has been replaced by the complex kernel $1/z$. This suggests
that real variables techniques could help in studying analytic
capacity, in spite of the fact that the Cauchy kernel is complex. In
fact, significant progress in the understanding of analytic capacity
was achieved when real variables methods were
systematically used (\cite{C}, \cite{davidvitushkin},  \cite{mmv},
\cite{mtv}, \cite{semiad} and~\cite{bilipschitz}), in particular the
Calder\'{o}n-Zygmund theory of the Cauchy singular integral. 

Recall that for a Borel set $E$ with finite length, $0<\mathcal{H}^1(E)<\infty$, David and L\'eger (see \cite{Leger}) proved that the $L^2(\mathcal{ H}^1|E)-$boundedness of the 
singular integral associated with the Cauchy kernel (or even with one of its coordinate parts $x_1/|x|^2$, $x_2/|x|^2$, $x=(x_1,x_2)\in\Rd$) implies that $E$ is rectifiable. 
We recall that a set in $\Rd$ is rectifiable if it is contained, up to an $\mathcal{H}^1$-negligible set, in a countable union of $1$-dimensional Lipschitz graphs. 
In \cite{cmpt} we extended this result to any kernel of the form $x_i^{2n-1}/|x|^{2n}$, $i=1,2$, $n\in\N,$ providing the first non-trivial examples of operators not 
directly related to the Cauchy transform whose $L^2-$boundedness implies rectifiability.

In this paper we introduce capacities associated with these kernels.
For $n\geq 1$, write $x=(x_1,x_2)\in\Rd$ and consider the kernels 
\begin{equation}\label{ourkernel}
K_1(x)=x_1^{2n-1}/|x|^{2n} \mbox{ and }K_2(x)=x_2^{2n-1}/|x|^{2n}.
\end{equation} 
For compact sets $E\subset\Rd$, we define 
\begin{equation*}
\gamma_n(E)=\sup|\langle T,1\rangle|,
\end{equation*}
the supremum taken over those real distributions $T$ supported on $E$ such
that for $i=1,2$, the potentials $K_i*T$
are in the unit ball of $L^\infty(\Rd)$.

We will show that the above defined capacity is comparable to analytic capacity, that is,

\begin{teo}\label{main}
There exists some positive constant $C$ such that for all compact sets $E\subset\Rd$,
\begin{equation*}
C^{-1}\gamma_n(E)\le\gamma(E)\le C\gamma_n(E).
\end{equation*}
\end{teo}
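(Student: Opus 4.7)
The plan is to sandwich both capacities between their positive-measure counterparts and compare those. Define
\[
\gamma_{n,+}(E) = \sup\bigl\{\mu(E) : \supp\mu \subset E,\ \|K_i * \mu\|_{L^\infty(\Rd)} \le 1,\ i=1,2\bigr\},
\]
and let $\gamma_+(E)$ be the analogous quantity defined with the Cauchy kernel $1/z$. Since positive Radon measures are in particular admissible distributions, one has trivially $\gamma_{n,+}(E) \le \gamma_n(E)$ and $\gamma_+(E) \le \gamma(E)$. Using the equivalence $\gamma \approx \gamma_+$ of Tolsa, the theorem reduces to establishing
\[
\text{(a)}\quad \gamma_n(E) \le C\,\gamma_{n,+}(E), \qquad \text{(b)}\quad \gamma_{n,+}(E) \approx \gamma_+(E).
\]

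For step (b) the strategy is to characterize both $\gamma_+$ and $\gamma_{n,+}$, up to absolute constants, as the supremum of $\mu(E)$ over positive Radon measures on $E$ with linear growth $\mu(B(x,r)) \le Cr$ whose associated singular integral operator (the Cauchy transform, or the vectorial operator with kernels $(K_1, K_2)$, respectively) is bounded on $L^2(\mu)$. For $\gamma_+$ this is the known Nazarov--Treil--Volberg / Tolsa characterization; for $\gamma_{n,+}$ one proves the analogous statement through a non-homogeneous $T(1)$ argument tailored to the kernels $K_i$, together with standard suppression of the potentials. Once both capacities are rewritten in this common form, one bridges them using \cite{cmpt}: $L^2(\mu)$-boundedness of the operator with kernels $K_1, K_2$ forces $\supp\mu$ to be rectifiable, and therefore, by David's theorem on uniformly rectifiable sets, the Cauchy transform is also bounded on $L^2(\mu)$; the reverse implication (rectifiability yields $L^2(\mu)$-boundedness of the $K_i$ operators) is again David's theorem, since the $K_i$ are smooth odd Calder\'on--Zygmund kernels.

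Step (a) is the substantive part and occupies the bulk of the work. It will follow the blueprint of Tolsa's proof that $\gamma \lesssim \gamma_+$, replacing the Cauchy kernel throughout by the vectorial pair $(K_1, K_2)$. Concretely, the plan is: (i) prove a non-homogeneous $Tb$ theorem for the operators associated with $K_1, K_2$, valid for general measures with linear growth; (ii) develop a corona-type decomposition of the admissible measure adapted to these operators; and (iii) combine this decomposition with stopping-time arguments on an associated Whitney tree to produce, out of any admissible distribution $T$ for $\gamma_n$, a positive measure admissible for $\gamma_{n,+}$ with mass comparable to $|\langle T, 1\rangle|$. The main obstacle is that for $n \ge 2$ no simple positive algebraic identity analogous to the Menger curvature formula is available; positivity must be extracted from the $K_i$ through the permutation-type identities introduced in \cite{cmpt}, and these permutation expressions, unlike curvature, are not pointwise non-negative, so sign is controlled only after integration over triples of points.

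Assembling the pieces, (a) and (b) together with $\gamma \approx \gamma_+$ give
\[
\gamma_n(E) \,\le\, C\,\gamma_{n,+}(E) \,\le\, C\,\gamma_+(E) \,\le\, C\,\gamma(E),
\]
while the reverse chain $\gamma(E) \approx \gamma_+(E) \approx \gamma_{n,+}(E) \le \gamma_n(E)$ yields the opposite inequality and completes the proof.
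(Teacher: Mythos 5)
Your outer structure matches the paper's: sandwich $\gamma_n$ and $\gamma$ between their positive-measure analogues, reduce to $\gamma_{n,+}\approx\gamma_+$, use Tolsa's $\gamma\approx\gamma_+$, and prove the harder inequality $\gamma_n\lesssim\gamma_{n,+}$ by adapting Tolsa's $T(b)$ scheme. But there are two genuine problems in the middle.

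First, your claim that the permutation quantities $p_i(z_1,z_2,z_3)$ associated with $K_i=x_i^{2n-1}/|x|^{2n}$ are ``not pointwise non-negative, so sign is controlled only after integration over triples'' is false, and this misconception steers the rest of the plan. It was proved in \cite{cmpt} that each $p_i$ is non-negative for \emph{every} triple of distinct points and vanishes exactly on collinear triples; the paper goes further and shows (Lemmas \ref{lowerbound} and the following lemma) the pointwise two-sided comparison $p_1(z_1,z_2,z_3)+p_2(z_1,z_2,z_3)\approx c(z_1,z_2,z_3)^2$. This \emph{is} the ``simple positive algebraic identity analogous to the Menger curvature formula'' that you say is missing, and it is exactly what makes the whole scheme go through.

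Second, because you think pointwise positivity is unavailable, you route step (b) through rectifiability: ``$L^2(\mu)$-boundedness of the $K_i$-operator forces $\supp\mu$ to be rectifiable, so by David's theorem the Cauchy transform is bounded.'' This bridge does not hold. The L\'eger/CMPT rectifiability theorems apply to sets $E$ with $0<\HH^1(E)<\infty$ and the measure $\HH^1|_E$; a general positive measure with linear growth and $L^2(\mu)$-bounded $S_n$ need not have rectifiable support (nor be Ahlfors regular, which David's theorem on uniformly rectifiable sets requires). The correct, and much more elementary, bridge is the one the paper takes: because $p_1+p_2\approx c^2$ pointwise, one reads off directly that for $\mu$ with linear growth, $\|S_n\|_{L^2(\mu)}<\infty$ iff $\|\mathcal C\|_{L^2(\mu)}<\infty$ (Corollary \ref{l2bound}), hence $\gamma_{n,\op}\approx\gamma_{\op}$ (Corollary \ref{desigualtats}); combined with the Davie--{\O}ksendal duality $\gamma_{n,\op}\approx\gamma_{n,+}$ this gives step (b) cleanly. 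Finally, for step (a) you correctly identify the Tolsa blueprint as the workhorse, but you omit the two specific obstacles that the paper has to overcome to make it run: the localization estimate for the potentials $K_i*T$ (Lemma \ref{localization1}), which requires the reproduction formula of Lemma \ref{reproductionformula} and the growth condition set up via \eqref{ourgrowth}, and the outer regularity of $\gamma_n$ (Lemma \ref{extreg}). Without these pieces the induction on scales in Tolsa's argument does not start.
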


The main motivation to study these capacities is getting a better understanding of the relation between the operators whose $L^2-$boundedness implies rectifiability 
and the comparability of analytic capacity and the capacities related to the kernels of such operators. It is worth to mention that if one considers  
the kernel $k(x_1,x_2)=x_1x_2^2/|x|^4$, then the comparability result between analytic capacity and the capacity related to the kernel $k$ does not hold.
See Section \ref{sectionhuov} for more details.\newline

For our second main result, we turn to the higher dimensional setting. Motivated by the paper \cite{mpv}, we set $n=1$ and consider capacities in $\Rn$, 
associated with the kernels $x_i/|x|^2$, $1\le i\le d$. 

For a compact $E \subset \Rn$ set
$$
\Gamma(E)=\sup \left\{|\langle T ,1\rangle | \right\},
$$
where the supremum is taken over those real distributions $T$
supported on $E$ such that the vector field
$\displaystyle{\frac{x}{|x|^2}*T}$ is in the unit ball of
$L^\infty(\Rn, \Rn)$.  Notice that, for $d=2$, due to \cite{semiad}, $\Gamma(E)$ is comparable to the analytic capacity $\gamma(E)$ .
Finally, for $1\le k \le d$, set
\begin{equation}\label{Gammak}
\Gamma_{\hat k}(E)=\sup \left\{|\langle T ,1\rangle| :  \left\|\frac{x_i}{|x|^{2}}*T\right\|_\infty
\leq 1,\, 1 \leq i \leq d,\, i \neq k\right \}.
\end{equation}
Thus we require the boundedness of $d-1$ components of the vector
valued potential~$x/|x|^{2}*T$ with Riesz kernel of homogeneity
$-1$.
  
In the plane, an easy complex argument  (see \cite{mpv}) shows that
\begin{equation}\label{mark}
\gamma(E)\approx\Gamma_{\hat k}(E),\;\;k=1,2.
\end{equation}
However in higher dimensions, this is an open question and indeed very little is known about these capacities $\Gamma_{\hat k}$. The reason why  $\Gamma_{\hat k}$ is 
difficult to understand in higher dimensions is that boundedness of $d-1$ potentials does not provide any linear growth condition on the distribution $T$. 
Concretely, it is not true that boundedness of $x_i/|x|^{2}*T$, $1\le i\le d-1$, implies that for each cube $Q$ one has
\begin{equation}\label{basicgrowth}
|\langle T,\varphi_Q\rangle|\le Cl(Q),
\end{equation}
for each test function $\varphi_Q\in{\mathcal C}_0^\infty(Q)$ satisfying $\|\varphi_Q\|_\infty\le 1$ and $\|\nabla\varphi_Q\|_{\infty}\le l(Q)^{-1}$. 
See Section 5 of \cite{mpv} for some examples of such phenomenon. Here $l(Q)$ stands for the side length of $Q$.\newline
 
In \cite{mpv} it was shown that the capacities $\Gamma_{\hat k}(E)$ are finite. Moreover,  the following higher dimensional version of \eqref{mark} was also shown: for $d\ge 3$,
\begin{equation}\label{conjecture}
\Gamma(E)\approx\Gamma_{\hat k}(E),\;\;1\le k\le d,
\end{equation}
assuming an extra growth condition on the definition of the capacities $\Gamma_{\hat k}(E)$.  Naturally, the following open question appeared: is it true that \eqref{conjecture} holds without any growth condition on the definition of $\Gamma_{\hat k}(E)$? 

Our next result deals with this question and answers it in the affirmative sense, replacing the capacity $\Gamma_{\hat k}$, $1\le k\le d$, by the capacity 
$\Gamma_{\hat k,+}$, which is a version of $\Gamma_{\hat k}$ in the sense that one replaces the real distributions in \eqref{Gammak} by positive measures. 
It is defined as follows, given a compact set $E\subset\Rn$, 
\begin{equation*}
\Gamma_{\hat k,+}(E)=\sup\mu(E),
\end{equation*}
the supremum taken over those positive measures $\mu$ supported on $E$ such
that the potentials $\mu*x_i/|x|^2$, $1\le i\le d$, $i\neq k$, are in the unit ball of $L^\infty(\Rn)$.

\begin{teo}\label{comparability2}
There exists some positive constant $C$ such that for all compact sets $E\subset\Rn$ 
$$C^{-1}\Gamma_{\hat k,+}(E)\le\Gamma(E)\le C\Gamma_{\hat k,+}(E).$$
\end{teo}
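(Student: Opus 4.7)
For the inequality $\Gamma(E)\le C\Gamma_{\hat k,+}(E)$, the plan is to invoke the higher-dimensional analogue of Tolsa's theorem $\gamma\approx\gamma_+$ for the vector Riesz transform $x/|x|^2$ in $\Rn$, namely $\Gamma(E)\approx\Gamma_+(E)$, where $\Gamma_+$ denotes the positive-measure version of $\Gamma$ requiring boundedness of all $d$ components of the vector potential. Granting this comparability, the desired inequality is immediate from the trivial inclusion $\Gamma_+(E)\le\Gamma_{\hat k,+}(E)$, since requiring boundedness of only $d-1$ components relaxes the constraint on positive competitors.

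For the reverse direction $\Gamma_{\hat k,+}(E)\le C\Gamma(E)$, the plan is to reduce to the main theorem of \cite{mpv}, which gives $\Gamma(E)\approx\Gamma_{\hat k}(E)$ under an auxiliary linear-growth hypothesis on the competitors. Concretely, given a positive $\mu$ supported on $E$ with $\|(x_i/|x|^2)*\mu\|_\infty\le 1$ for $i\ne k$, the key claim is that $\mu$ automatically satisfies $\mu(B(x,r))\le Cr$ for all balls $B(x,r)$. Once this is in hand, $\mu$ qualifies as a competitor in the growth-restricted capacity of \cite{mpv}, and that theorem yields $\mu(E)\le C\Gamma(E)$.

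The main obstacle is proving the linear growth of $\mu$. The introduction emphasizes that this property fails for signed distributions, so positivity must play an essential role. My proposed strategy is a two-point symmetrization: setting $R_i\mu=(x_i/|x|^2)*\mu$, at the antipodal pair $x_0\pm\lambda re_i$ (for any fixed $i\ne k$ and a suitable $\lambda>1$) the local contribution of $\mu|_{B(x_0,r)}$ to $R_i\mu$ has opposite definite signs and magnitude $\gtrsim\mu(B(x_0,r))/r$ by positivity, while $\|R_i\mu\|_\infty\le 1$; hence the difference of $R_i\mu$ at those two points, bounded by $2$, controls $\mu(B(x_0,r))/r$ up to a far-field term that depends on the growth of $\mu$ at larger scales. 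I would close the argument by a downward induction on scale, starting from the trivial bound $\mu(B(x,r))\le\mu(E)$ at the scale of $\diam(E)$ and propagating to smaller scales. Ensuring that the iteration produces a scale-independent constant rather than one that accumulates geometrically is the delicate point; if the direct bootstrap does not suffice, an alternative is to work with the logarithmic potential $U_\mu=-\log|\cdot|*\mu$, whose partial derivatives in the $d-1$ directions orthogonal to $e_k$ are bounded by hypothesis, and exploit the resulting Lipschitz regularity of $U_\mu$ transverse to $e_k$ in combination with $\mu\ge 0$ to extract the growth bound.
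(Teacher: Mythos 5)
Your first direction is fine: the trivial inclusion $\Gamma_+\le\Gamma_{\hat k,+}$ combined with the comparability of $\Gamma$ with a positive-measure capacity gives $\Gamma(E)\le C\,\Gamma_{\hat k,+}(E)$. (The paper gets this slightly more directly from the \cite{mpv} theorem, whose sup over positive measures with linear growth and bounded $(d-1)$ potentials is automatically $\le \Gamma_{\hat k,+}(E)$, since adding a growth constraint only shrinks the supremum.)

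The second direction is where the proposal breaks down. You claim that any positive $\mu$ with $\|(x_i/|x|^2)*\mu\|_\infty\le1$ for $i\ne k$ satisfies $\mu(B(x,r))\le Cr$ for \emph{every} ball. The paper neither claims nor proves this; it proves only a ``big piece'' statement (Lemma~\ref{bigp}): the union $A_M^\mu$ of all non--$M$-Ahlfors balls has $\mu(A_M^\mu)\le\tfrac12\mu(\Rn)$ once $M$ is large. That weaker statement is then fed into \cite{mpv}, and it is proved by a completely different mechanism, not by two-point symmetrization: one mollifies $\mu$ to $\mu_t$, bounds the permutation quantity $p(\mu_t)\lesssim\|\mu\|$ using only the $L^\infty$ potential bounds, and then invokes the David--L\'eger curvature theorem in the form of Lemma~\ref{curvgran}/\ref{curvgran2} to show every high-density ball contributes a quantitatively large share of $p(\mu_t)$, so such balls cannot carry much mass in total. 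Your symmetrization argument has the defect you already flag and do not repair: for $z=x_0-y$ with $|z|>r$, the bracket $\frac{(a+z_i)}{|\,\cdot\,|^2}-\frac{(-a+z_i)}{|\,\cdot\,|^2}$ has sign equal to that of $z_\perp^2-z_i^2+a^2$, so the far-field contribution is genuinely two-signed; its negative part is controlled only by growth of $\mu$ at comparable and larger scales, and the near-field coefficient $(\lambda-1)/(\lambda+1)^2$ cannot be made to dominate it uniformly. A downward induction on scales then produces a geometric series whose ratio is not obviously $<1$, so a scale-independent constant does not fall out. The logarithmic-potential alternative also does not close: in $\Rn$, $\nabla\log|x|=x/|x|^2$ but $\Delta\log|x|=(d-2)/|x|^2$ is not a multiple of $\delta_0$, so Lipschitz control of $U_\mu$ transverse to $e_k$ does not recover $\mu$ from a Laplacian. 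In short: the key missing idea is that one should aim only for linear growth on a big piece, and prove it via the permutation/curvature estimate and David--L\'eger, rather than asserting everywhere-linear growth by a direct potential-theoretic argument.
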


The paper is organized as follows, Section \ref{sectionhuov} contains some examples of capacities that are not comparable to analytic capacity. 
In Section \ref{sectionsketch} we present a sketch
of the proof of Theorem \ref{main}. It becomes clear that the proof depends
on two facts: the close relationship between the quantities one
obtains after symmetrization of the kernels~$1/z$
and~$x_i^{2n-1}/|x|^{2n}$, $i=1,2$, and a localization $L^\infty$ estimate for the
scalar kernels $x_i^{2n-1}/|x|^{2n}$. In Section \ref{sectionpermu} we deal with the
symmetrization issue and in Section \ref{sectionlocal} with the localization
estimate. In Section \ref{sectioncont} we show an exterior regularity property of $\gamma_n$ needed for the proof of Theorem \ref{main}. 
In Section \ref{sectionbigpiece} we prove Theorem \ref{comparability2}. Finally,  in Section \ref{sectionmiscellaneous} we present various additional results. 

\section{Preliminaries}

\subsection{Some capacities that are not comparable to analytic capacity}\label{sectionhuov}

Let $K$ be some Calder\'on-Zygmund kernel of homogeneity $-1$ and consider its associated capacity $\gamma_K$ which is defined as follows: for a compact set $E\subset\Rd$,
$$\gamma_{K}(E)=\sup\{|\langle T,1\rangle |\},$$
the supremum taken over all distributions supported on the set $E$ and such that $K*T$ is an $L^\infty-$ function with $\|K*T\|_\infty\le 1$. 

As we already stated in the Introduction, we are interested in characterizing which are the homogeneous Calder\'on-Zygmund kernels whose related 
capacity is comparable to the analytic capacity $\gamma$. We are as well interested in the open problem of fully characterizing the homogeneous Calder\'on-Zygmund operators whose 
boundedness in $L^2({\mathcal{H}^1|E})$ implies the rectifiability of $E$ (see \cite{mmv}, \cite{Leger} and \cite{cmpt}). 
We think that both characterizations are deep problems in the area as even the candidate classe of ``reasonable''
kernels for the problems is far from clear. The relation between the two problems is illustrated in the Proposition \ref{relation} below. As a consequence, Corollary 
\ref{noncomp} shows that for some Calder\'on-Zygmund kernels, 
the capacities related to them are not comparable to analytic capacity. 

\begin{prop}\label{relation}
 Let $E\subset\Rd$ be a compact set with ${\mathcal H}^1(E)<\infty$. Let $K$ be some Calder\'on-Zygmund kernel of homogeneity $-1$ and 
 $S_K$ its associated Calder\'on-Zygmund operator. If $\gamma_K(E)\approx\gamma(E)$ and $S_K:L^2({\mathcal H}^1|E)\rightarrow L^2({\mathcal H}^1|E)$, then $E$ is not 
purely unrectifiable.
\end{prop}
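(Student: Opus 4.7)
My plan is to show that the two hypotheses force $\gamma(E)\gtrsim\mathcal H^1(E)$, and then appeal to David's solution of the Vitushkin conjecture to rule out pure unrectifiability. Assume $\mathcal H^1(E)>0$ (else the conclusion is trivial) and write $\mu=\mathcal H^1\lfloor E$. By a standard Egorov-type restriction argument applied to the upper $1$-density of $\mu$, one may further assume, after replacing $E$ by a subset of comparable $\mathcal H^1$-measure, that $\mu$ has linear growth $\mu(B(x,r))\le C_0 r$ for all $x,r$.

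The core step is to produce a positive measure $\nu\le\mu$ with $\nu(E)\gtrsim\mathcal H^1(E)$ and $\|K*\nu\|_\infty\lesssim 1$; any such $\nu$ is a competitor in the definition of $\gamma_K$, so it yields $\gamma_K(E)\gtrsim\mathcal H^1(E)$. To build $\nu$, I would first apply Cotlar's inequality, which transfers the $L^2(\mu)$-boundedness of $S_K$ to the maximal truncation $S_{K,\ast}$. A Chebyshev estimate on $\mu$ then produces a subset $F\subset E$ with $\mu(F)\ge \mu(E)/2$ on which $S_{K,\ast}\mu$ is pointwise bounded by a constant depending only on $\|S_K\|_{L^2(\mu)\to L^2(\mu)}$. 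A standard good-$\lambda$ / stopping-time argument, in the spirit of the proof of $\gamma_+\gtrsim\mathcal H^1$ in the Cauchy setting, then upgrades this pointwise bound to a genuine $L^\infty(\Rd)$-bound for $K*(\mu\lfloor F')$ on some further subset $F'\subset F$ of comparable $\mu$-measure, and I set $\nu=\mu\lfloor F'$.

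Combining this with the hypothesis $\gamma_K(E)\approx\gamma(E)$ gives $\gamma(E)\gtrsim\mathcal H^1(E)>0$; since $\mathcal H^1(E)<\infty$, David's theorem \cite{davidvitushkin} forces $E$ to contain a rectifiable subset of positive length, so $E$ is not purely unrectifiable. The main obstacle is the good-$\lambda$ upgrade from pointwise control of $S_{K,\ast}\mu$ on a large subset of $E$ to $L^\infty(\Rd)$-control of the full potential $K*\nu$: this is an instance of non-homogeneous Calder\'on-Zygmund machinery for the single kernel $K$ and uses nothing beyond the two standing hypotheses together with the arranged linear growth of $\mu$.
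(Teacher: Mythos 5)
Your high-level plan matches the paper's: restrict $\mathcal H^1\lfloor E$ to a subset with linear growth, use the $L^2$-boundedness of $S_K$ to manufacture a nonzero competitor for $\gamma_K$, conclude $\gamma_K(E)>0$ hence $\gamma(E)>0$ from the comparability hypothesis, and finish with David's theorem \cite{davidvitushkin}. The gap is in the middle step. The paper passes from $L^2(\mu)$-boundedness to weak-$(1,1)$ boundedness of $S_K$ with respect to $\mu$ (classical CZ theory for doubling $\mu$, \cite{ntv2} in general) and then applies the Davie--{\O}ksendal dualization \cite{do}, \cite{christ}, \cite{uy}, \cite{mp}: this produces a \emph{weight} $h$, $0\le h\le 1$, with $\int h\,d\mu\gtrsim\mu(G)$ and $\|K*(h\,d\mu)\|_{L^\infty(\Rd)}\le 1$, and $h\,d\mu$ is the competitor.

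You instead propose Cotlar plus Chebyshev to find a large $F\subset E$ on which $S_{K,*}\mu$ is pointwise bounded, and then a ``good-$\lambda$/stopping-time'' upgrade giving an $L^\infty(\Rd)$-bound on $K*(\mu\lfloor F')$ for some further $F'\subset F$. That upgrade is where things break. Since $K$ is not a positive kernel, a pointwise bound on $S_{K,*}\mu$ over $F$ says nothing about $S_{K,*}(\mu\lfloor F')$: writing $K*_\epsilon(\mu\lfloor F')(x)=K*_\epsilon\mu(x)-K*_\epsilon(\mu\lfloor(E\setminus F'))(x)$, the subtracted term is uncontrolled no matter how $F'$ is chosen, and there is no good-$\lambda$ or stopping-time argument that repairs this by itself. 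The reason the Davie--{\O}ksendal route works is precisely that it optimizes over general weights $h$ rather than characteristic functions of subsets: the minimax/Hahn--Banach duality is what secures a genuinely $L^\infty(\Rd)$-bounded potential. Your appeal to ``the spirit of the proof of $\gamma_+\gtrsim\mathcal H^1$ in the Cauchy setting'' is in fact an appeal to that duality in disguise, not to a good-$\lambda$ mechanism. Replace your Cotlar/Chebyshev/good-$\lambda$ chain by the weak-$(1,1)$ estimate followed by the Davie--{\O}ksendal lemma and the proof closes. (A minor further point: if $\mathcal H^1(E)=0$ the conclusion is not trivially true but trivially \emph{false}; both your proof and the paper's tacitly assume $\mathcal H^1(E)>0$.)
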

\begin{proof}
 Let $F\subset E$ be such that ${\mathcal H}^1(F)>0$ and ${\mathcal H}^1|F$ has linear growth. Set $\mu={\mathcal H}^1|F$. 
From the $L^2(\mu)-$boundedness of $S_K$, we get that each $S_K$ is of weak type $(1,1)$ with respect to $\mu$. This follows from the standard 
Calder\'on-Zygmund theory if the measure is doubling and by an argument from \cite{ntv2} in the general case. By a standard dualization process 
(see \cite{do}, \cite[Theorem 23]{christ}, \cite{uy} and \cite{mp}) we get that for each compact set $G\subset F$ with $0<\mu(G)<\infty$, there exists a function 
$h$ supported on $G$, $0\le h\le 1$, such that 
$\displaystyle{\int_G hd\mu\ge C\mu(G)}$ and $\|S_K(hd\mu)\|_\infty=\|K*hd\mu\|_\infty\le 1$. Therefore $\gamma_K(E)>0$ and $\gamma(E)>0$ as well. 
Then by \cite{davidvitushkin}, $E$ is not purely unrectifiable (recall that a set $E$ is purely unrectifiable if the intersection of E with any curve of finite length has zero 1-dimensional Hausdorff 
measure).
\end{proof}

From Proposition \ref{relation} we obtain the following corollary :

\begin{co}
Let $K$ be some Calder\'on-Zygmund kernel of homogeneity $-1$ and 
 $S_K$ its associated Calder\'on-Zygmund operator. Suppose $\gamma_K\approx\gamma$. If $E\subset\Rd$ is a compact set with ${\mathcal H}^1(E)<\infty$ and $S_K$ is bounded in 
$L^2({\mathcal H}^1|E)$, then $E$ is rectifiable.
\end{co}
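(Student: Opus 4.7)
The plan is to argue by contradiction using the Besicovitch decomposition together with the proposition just proved. Suppose $E$ is not rectifiable. Since $\HH^1(E)<\infty$, the Besicovitch decomposition writes $E=E_r\cup E_u$ where $E_r$ is rectifiable, $E_u$ is purely unrectifiable, and the two pieces are disjoint. By the assumption that $E$ fails to be rectifiable, we must have $\HH^1(E_u)>0$.

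Next I would verify that $S_K$ remains bounded on $L^2(\HH^1\rest E_u)$. This is automatic: if $f\in L^2(\HH^1\rest E_u)$, extend it by zero to a function on $E$; then
\[
\|S_K f\|_{L^2(\HH^1\rest E_u)}\le \|S_K f\|_{L^2(\HH^1\rest E)}\le C\|f\|_{L^2(\HH^1\rest E)}=C\|f\|_{L^2(\HH^1\rest E_u)},
\]
so the $L^2(\HH^1\rest E)$-boundedness of $S_K$ transfers to $L^2(\HH^1\rest E_u)$-boundedness.

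Now I would apply Proposition \ref{relation} to the compact set $E_u$ (or, since $E_u$ need not be compact, to a compact subset of it carrying positive $\HH^1$-measure; such a subset exists because $\HH^1\rest E_u$ is a Radon measure). The hypotheses are all met: $\HH^1(E_u)<\infty$, $\gamma_K\approx\gamma$ is assumed globally, and $S_K$ is $L^2$-bounded on the restricted measure. The conclusion of Proposition \ref{relation} is that this set is \emph{not} purely unrectifiable, contradicting the defining property of $E_u$.

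The only nontrivial ingredient is Proposition \ref{relation} itself (which supplies a nonzero distribution with bounded $K$-potential via the non-homogeneous $T1$/dualization machinery of \cite{ntv2}, \cite{christ}, \cite{mp}, together with David's theorem \cite{davidvitushkin}); everything else is structural. Since that proposition is already available, I expect no real obstacles here beyond the minor bookkeeping of passing to a compact subset of positive measure inside $E_u$ before invoking it.
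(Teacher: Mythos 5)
Your argument is correct and follows essentially the same route as the paper: argue by contradiction, extract a compact, purely unrectifiable subset of $E$ of positive $\HH^1$-measure (the paper does this directly, you via the Besicovitch decomposition, but these are the same observation), note that $L^2$-boundedness of $S_K$ restricts to subsets, and then apply Proposition \ref{relation} to get a contradiction. Your bookkeeping about compactness and the restriction of the operator fills in details the paper leaves implicit.
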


\begin{proof}
If $E$ were not rectifiable, then taking a purely unrectifiable compact subset $F\subset E$ with ${\mathcal H}^1(F)>0$ and using that, by 
Proposition \ref{relation}, $\gamma_K(F)\approx\gamma(F)$, we would get that $F$ is not purely unrectifiable, a contradiction.
\end{proof}

In \cite{huovinen}, it is shown that there exist homogeneous kernels, such as $H(x_1,x_2)=\frac{x_1x_2^2}{|x|^4}$, $x=(x_1,x_2)\in\Rd$, 
whose corresponding singular integrals are $L^2$-bounded on purely unrectifiable sets. We consider now the capacity related to this kernel $H$, namely $\gamma_H$.
As a consequence of Proposition \ref{relation} we obtain the following corollary

\begin{co}\label{noncomp}
 There exists some compact set $E\subset\Rd$ with $\gamma(E)=0$ and $\gamma_{H}(E)>0$.
\end{co}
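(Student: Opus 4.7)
The plan is to produce a concrete witness $E$ by taking the purely unrectifiable set constructed by Huovinen on which the singular integral operator associated with $H$ is $L^2$-bounded, and then to argue that $\gamma(E)=0$ comes for free from pure unrectifiability while $\gamma_H(E)>0$ is extracted from the $L^2$-boundedness via the same dualization used in the proof of Proposition~\ref{relation}.

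More precisely, first I would invoke \cite{huovinen} to fix a compact set $E\subset\Rd$ with $0<\mathcal H^1(E)<\infty$ that is purely unrectifiable and such that the singular integral operator $S_H$ associated with the kernel $H(x_1,x_2)=x_1x_2^2/|x|^4$ is bounded on $L^2(\mathcal H^1|E)$. Since $E$ is purely unrectifiable with finite $1$-dimensional Hausdorff measure, the David--Vitushkin theorem \cite{davidvitushkin} (already cited inside the proof of Proposition~\ref{relation}) yields $\gamma(E)=0$. This gives the first half of the claim without any further work.

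Next I would show $\gamma_H(E)>0$. Set $\mu=\mathcal H^1|E$. The $L^2(\mu)$-boundedness of $S_H$ implies the weak-$(1,1)$ boundedness of $S_H$ with respect to $\mu$ (the standard Calder\'on--Zygmund argument in the doubling case, and the Nazarov--Treil--Volberg extension \cite{ntv2} in the non-doubling situation). Then the dualization argument used in the proof of Proposition~\ref{relation} (\cite{do}, \cite[Theorem~23]{christ}, \cite{uy}, \cite{mp}) produces a function $h$ with $0\le h\le 1$, supported on $E$, satisfying $\int h\,d\mu \gtrsim \mu(E)>0$ and $\|H*(h\,d\mu)\|_\infty \le 1$. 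Taking $T=h\,d\mu$ as a competitor in the definition of $\gamma_H(E)$ yields $\gamma_H(E)\gtrsim \mu(E)>0$, completing the proof.

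I do not expect a serious obstacle here: the statement is essentially the contrapositive of Proposition~\ref{relation} applied to Huovinen's example, and the only nontrivial inputs are (i) the existence of Huovinen's purely unrectifiable set of positive finite length on which $S_H$ is $L^2$-bounded, and (ii) the dualization machinery, both of which are already in place. The only mildly delicate point is to ensure that the set supplied by \cite{huovinen} really has $0<\mathcal H^1(E)<\infty$ (so that David--Vitushkin applies to give $\gamma(E)=0$) and simultaneously supports an $L^2$-bounded $S_H$; if Huovinen's construction gives this on a subset $F\subset E$ rather than on all of $E$, one applies the above argument to $F$ and uses $\gamma_H(E)\ge \gamma_H(F)$ and $\gamma(E)\ge\gamma(F)$ together with the choice $E=F$.
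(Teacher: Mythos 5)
Your proof is correct and matches the paper's intended argument; the paper gives no explicit proof, simply asserting that the corollary follows from Proposition~\ref{relation}, and your write-up supplies exactly the unpacking that assertion requires. You correctly observed the key point: one cannot apply the \emph{statement} of Proposition~\ref{relation} (which would only yield that $\gamma_H\not\approx\gamma$, not a witness set), but rather must reuse the dualization step from its \emph{proof} — which does not need the hypothesis $\gamma_K\approx\gamma$ — to extract $\gamma_H(E)>0$ directly from the $L^2$-boundedness on Huovinen's purely unrectifiable set, while $\gamma(E)=0$ follows from David's theorem. The only detail you elide is the standard reduction inside the proof of Proposition~\ref{relation}: one first passes to a subset $F\subset E$ with $\mathcal H^1(F)>0$ and $\mathcal H^1|F$ of linear growth before invoking the weak $(1,1)$ estimate and dualization; since $L^2$-boundedness and the desired lower bound on $\gamma_H$ are both inherited by restriction to subsets, this costs nothing, but it is worth stating explicitly.
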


It is worth saying that Huovinen's method does not work for the kernels we are considering in \eqref{ourkernel}, namely his construction does not give a purely 
unrectifiable set when changing the kernel $H$ by the kernels in \eqref{ourkernel}.\newline

\subsection{Sketch of the proof of Theorem \ref{main}}\label{sectionsketch}

In this section we will sketch the proof of the two inequalities appearing in the statement of Theorem \ref{main}. 
The first one is the following, for a compact set $E\subset\Rd$,
\begin{equation}\label{firstineq}
\gamma_n(E) \le C \,\gamma(E).
\end{equation}
For the proof of this inequality we need to introduce
the Cauchy transform with respect to an underlying positive Radon
measure $\mu$ satisfying the linear growth condition
\begin{equation}\label{lineargrowth}
\mu(B(x,r)) \le C\,r, \quad x \in \Rd, \quad r\geq 0.
\end{equation}
Given $\epsilon > 0$ we define the truncated Cauchy transform at
level $\epsilon$ as
\begin{equation}\label{Cauchyep}
C_{\epsilon}(f \,\mu)(z)=\int_{|w-z|>\epsilon} \frac{f(w)} {w-z}
\, d\mu(w), \quad z \in \Rd,
\end{equation}
for $f \in L^2(\mu)$. For a finite measure $\mu$, the growth condition on $\mu$ insures that
each $C_{\epsilon}$ is a bounded operator on $L^2(\mu)$ with
operator norm $ \|C_{\epsilon}\|_{L^2(\mu)}$ possibly depending on~$\epsilon$. We say that the Cauchy transform is bounded on
$L^2(\mu)$ when the truncated Cauchy
transforms are uniformly bounded on~$L^2(\mu)$. Call $L(E)$ the
set of positive Radon measures supported on $E$ which satisfy
\eqref{lineargrowth} with $C=1$ . One defines the capacities $\gamma_{\op}(E)$ and $\gamma_+(E)$ by
$$
\gamma_{\op}(E) = \sup \{\mu(E): \mu \in L(E) \quad\text{and}\quad
\|C\|_{L^2(\mu)} \le 1 \},
$$
$$
\gamma_+(E) = \sup \{\mu(E): \mu \in L(E) \quad\text{and}\quad \|\frac 1{z}*\mu\|_\infty\le 1\}.
$$
Clearly $\gamma_+(E)\le\gamma(E)$. The deep result in \cite{semiad} asserts that in fact $\gamma_+(E)$ is comparable to the anality capacity of $E$.
In \cite{tolsaindiana}, it was proved that the capacitiy $\gamma_+(E)$ is comparable to $\gamma_{\op}(E)$, that is, for some positive
constant $C$ one has
\begin{equation}\label{GammaopGamma+}
C^{-1}\, \gamma_{\op}(E) \le \gamma_+(E) \le C\,
\gamma_{\op}(E),
\end{equation}
for each compact set $E \subset\Rd.$ We remind the reader that the first inequality in
\eqref{GammaopGamma+} depends on a simple but ingenious duality
argument due to Davie and {\O}ksendal (see \cite[p.139]{do},
 \cite[Theorem 23, p.107]{christ} and \cite[Lemma 4.2]{verdera}).

From the first inequality in \eqref{GammaopGamma+}  we get that for some constant~$C$ and all compact sets~$E$,
$$
\gamma_{\op}(E) \le C \, \gamma(E).
$$

 To prove \eqref{firstineq} we will estimate $\gamma_n(E)$ by a constant times
$\gamma_{\op}(E)$. The natural way to perform that is to introduce
the capacity $\gamma_{n,\op}(E)$ and check the validity of the two estimates
\begin{equation}\label{GammanGammanop}
\gamma_n(E) \le C \,\gamma_{n,\op}(E)
\end{equation}
and
\begin{equation}\label{GammanopGammaop}
\gamma_{n,\op}(E) \le C \,\gamma_{\op}(E).
\end{equation}

To define $\gamma_{n,\op}$, first we introduce the truncated transform $S_{n,\ep}(f
\,\mu)(x)$ associated with the vectorial kernel $K=(K_1,K_2)$ with $\displaystyle{K_i(x)=x_i^{2n-1}/|x|^{2n}}$, $i=1,2$, as in~\eqref{Cauchyep}, but
 with the Cauchy kernel replaced by the vector valued  kernel $K$ just defined. We also set
$$
\|S_n\|_{L^2(\mu)} = \sup_{\epsilon > 0}
\|S_{n,\ep}\|_{L^2(\mu)} ,
$$
and
$$
\gamma_{n, \op}(E) = \sup \{\mu(E): \mu \in L(E)
\quad\text{and}\quad \|S_n\|_{L^2(\mu)} \le 1\,\}.
$$

One proves \eqref{GammanopGammaop} by checking that the
symmetrization of the Cauchy kernel is controlled by the
symmetrization of kernel $K$ (see Lemma \ref{lowerbound} and Corollary \ref{desigualtats}). 
In fact, we prove in Corollary \ref{l2bound} that for a positive measure $\mu$ having linear growth, the $L^2(\mu)$ boundedness of the Cauchy transform is 
equivalent to the $L^2(\mu)$ boundedness of the operators $S_n$. Therefore, the capacities $\gamma_{n, \op}(E)$ and $\gamma_{\op}(E)$ are comparable. 
Here the fact that we are dealing with kernels of
homogeneity $-1$ plays a key role, because, as it is shown by Farag in \cite{Farag},
they enjoy a special positivity property which is missing in
general. See Section \ref{sectionpermu} for complete details.

The proof of \eqref{GammanGammanop} depends on Tolsa's
proof of $\gamma(E) \le C\, \gamma_{\op}(E).$
One of the technical points that we need to prove in our setting is a localization
result for the potentials we deal with in this case, namely for the potentials associated with the kernels $K_i$, $i=1,2$. Specifically, in Section \ref{sectionlocal} we
prove that there exists a positive constant~$C$ such that, for
each compactly supported distribution~$T$ and for each coordinate
$i$, we have
\begin{equation}\label{localiz}
\left\|\frac{x_i^{2n-1}}{|x|^{2n}} * \varphi_Q T \right\|_\infty  \leq
C\left(\left\|\frac{x_i^{2n-1}}{|x|^{2n}} * T \right\|_\infty+G(T)\right)
\end{equation}
for each square $Q$ and each  $\varphi_Q \in{\mathcal C}^\infty_ 0(Q)$
satisfying $\|\varphi_Q\|_\infty\le 1$ and $\|\nabla\varphi_Q\|_\infty \le l(Q)^{-1}$. 
Here $G(T)$ is some constant related to the linear growth of $T$ (see Section \ref{sectionlocal} for a definition).

Once \eqref{localiz} is at our disposition, we claim that inequality \eqref{GammanGammanop} can be
proved by adapting the scheme of the proof of Theorems~1.1 in~\cite{semiad} and 7.1 in~\cite{semiad2}.  As Lemma \ref{extreg}
shows, the capacities $\gamma_n$, $n\in\N$, enjoy
the exterior regularity property. This is also true for the
capacities $\gamma_{n,+}$, defined by
$$
\gamma_{n,+}(E)=\sup\left\{\mu(E):\left\|\frac{x_j^{2n-1}}{|x|^{2n}}*\mu\right\|_\infty\leq 1, j=1,2\right\},
$$
just by the weak $\star$ compactness of the set of positive
measures with total variation not exceeding $1$. Therefore
we can approximate a general compact set $E$ by sets which are
finite unions of squares of the same side length in such a way that
the capacities $\gamma_n$ and $\gamma_{n,+}$ of the
approximating sets are as close as we wish to those of $E$.
As in \eqref{GammaopGamma+}, one has, using the Davie-{\O}ksendal
Lemma for several operators \cite[Lemma 4.2]{mp},
\begin{equation*}
C^{-1}\, \gamma_{n,\op}(E) \le \gamma_{n,+}(E) \le C\,
\gamma_{n,\op}(E).
\end{equation*}
Thus we can assume, without loss of generality,  that $E$ is a
finite union of squares of the same size. This will allow to implement
an induction argument on the size of certain rectangles. The first step involves rectangles of diameter
comparable to the side length of the squares whose union is $E$.

The starting point of the general inductive step in~\cite{semiad} and \cite{semiad2} consists in
the construction of a positive Radon measure $\mu$ supported on a
compact set $F$ which approximates $E$ in an appropriate sense. The set $F$ is defined as the
union of a special family of squares $\{Q_i\}_{i=1}^N$ that cover the
set $E$ and approximate $E$ at an appropriate intermediate scale.
One then sets
$$F=\bigcup_{i=1}^NQ_i.$$
The construction of the approximating set~$F$ implies that 
$\gamma_{n,+}(F)\leq C\,\gamma_{n,+}(E)$. This part of the proof extends without any obstruction to our case
because of  the positivity properties of the symmetrization of our
kernels (see Section \ref{sectionpermu}). To construct the measure $\mu$, observe that the definition of $\gamma_{n}(E)$
gives us a real distribution $S_0$ supported on $E$ such that
\begin{enumerate}
 \item ${\gamma_{n}(E) \le 2 |\langle S_0,1\rangle |.}$\newline
 \item  $\displaystyle{\left\|\frac{x_j^{2n-1}}{|x|^{2n}}* S_0\right\|_\infty\leq 1,}$ \quad $1\leq j\leq 2$.
 \end{enumerate}

Consider now functions $\varphi_i\in{\mathcal C}_0^{\infty}(2Q_i)$,
$0\leq\varphi_i\leq 1$, $\|\varphi_i\|_\infty\le 1$ and $\|\nabla \varphi_i\|_\infty\leq
l(Q_i)^{-1}$  and
$\sum_{i=1}^N\varphi_i=1$ on $\bigcup_iQ_i$. We define now
simultaneously the measure~$\mu$ and an auxiliary measure~$\nu$,
which should be viewed as a model for~$S_0$ adapted to the family of
squares $\{Q_i\}_{i=1}^N$.  For each square~$Q_i$ take a concentric
segment~$\Sigma_i$ of length a small fixed fraction of $\gamma_{n}(E \cap 2Q_i)$ and set
$$
\mu=\sum_{i=1}^N\Hu_{|\Sigma_i} \quad \mbox{ and }\quad
\nu=\sum_{i=1}^N\frac{\langle S_0,\varphi_i\rangle }{\Hu(\Sigma_i)}\Hu_{|\Sigma_i}.
$$
We have $d\nu=bd\mu$, with
$\displaystyle{b=\frac{\langle \varphi_i,\nu_0\rangle}{\Hu(\Sigma_i)}}$ on
$\Sigma_i$. At this point we need to show that our function~$b$ is
bounded, to apply later a suitable $T(b)$ Theorem. To estimate
$\|b\|_{\infty}$ we use the localization inequalities \eqref{localiz}. Thus,  $|\langle S_0 , \varphi_i\rangle |\leq C\,\gamma_{n}(2Q_i\cap E),$ for $1\leq i\leq N$.
It is now easy to see that $\gamma_n(E) \le C \,\mu(F) $:
\begin{equation*}
\gamma_n(E) \le 2\,|\langle S_0,1\rangle | = 2\left|\sum_{i=1}^N \langle S_0 ,
\varphi_i\rangle \right| \le C \sum_{i=1}^N  \gamma_n(2Q_i\cap E) =
C \, \mu(F).
\end{equation*}

Notice that the
construction of $F$ and $\mu$ gives readily that $\gamma_n(E)
\le C\, \mu(F),$  and $ \gamma_{n,+}(F) \le C\,
\gamma_{n,+}(E)$, which tells us that $F$ is not too small
but also not too big.
 However, one cannot expect the
operator $S_n$ to be bounded on~$L^2(\mu)$. One has to carefully look for a
compact subset $G$ of $F$ such that $\mu(F) \le C\,\mu(G)$, the
restriction $\mu_G$ of $\mu$ to $G$ has linear growth and $S_n$ is bounded on
$L^2(\mu_G)$ with dimensional constants. This completes the proof
because then
\begin{equation*}
\begin{split}
\gamma_{n}(E) &\le C\, \mu(F) \le C\, \mu(G) \le
C\,\gamma_{n,\op}(G)  \le C\,\gamma_{n,\op}(F) \\*[5pt]
& \le C\,\gamma_{n,+}(F) \le C\, \gamma_{n,+}(E) \le
C\,\gamma_{n,\op}(E) .
\end{split}
\end{equation*}
We do not insist in summarizing the intricate details, which can be
found in \cite{semiad} and \cite{semiad2}, of the definition of the
set $G$ and of the application of the $T(b)$ Theorem of \cite{ntv}.\newline

The second inequality in Theorem \ref{main} is 
\begin{equation}\label{secondineq}
\gamma(E)\le C \gamma_n(E).
\end{equation}
Since by \cite{semiad}, $\gamma(E)\approx\gamma_{\op}(E),$ and as we mentioned above we have
\begin{equation}\label{GammaopGammanop}
\gamma_{\op}(E)\le C\gamma_{n,\op}(E),
\end{equation}
we get that $\gamma(E)\le C\gamma_{\op}(E)\le C\gamma_{n,\op}(E).$
The duality arguments used to prove the first inequality in \eqref{GammaopGamma+} can also be used in our setting, therefore $\gamma_{n,\op}(E)\le C\gamma_{n,+}(E)$ holds. Finally, by definition, $\gamma_{n,+}(E)\le\gamma_n(E).$ This shows how \eqref{secondineq} in Theorem \ref{main} can be proved. 

\section{Symmetrization process and $L^2-$boundedness}\label{sectionpermu}

The symmetrization process for the Cauchy kernel introduced in
\cite{me} has been succesfully applied to many problems of analytic
capacity and $L^2$ boundedness of the Cauchy integral operator (see
\cite{mv}, \cite{mmv}, \cite{semiad}, and the book \cite{pa}, for example). In the recent paper \cite{cmpt}, the symmetrization method was also used to give the first non-trivial examples of operators not 
directly related to the Cauchy transform whose $L^2-$boundedness implies rectifiability. 

Given three distinct points in the plane, $z_1$, $z_2$ and
$z_3$, one finds out, by an elementary computation that
\begin{equation}\label{curvatura}
c(z_1,z_2,z_3)^2=\sum_{\sigma}\frac
1{(z_{\sigma(1)}-z_{\sigma(3)})\overline{(z_{\sigma(2)}-z_{\sigma(3)})}}
\end{equation}
where the sum is taken over the permutations of the set
$\{1,2,3\}$ and $c(z_1,z_2,z_3)$ is {\em Menger curvature}, that
is, the inverse of the radius of the circle through $z_1$, $z_2$ and~$z_3$. In particular \eqref{curvatura} shows that the sum on the
right hand side is a non-negative quantity.

In $\Rd$ and for $1\leq i\leq 2$ the quantity
\begin{equation*}
 p_i(z_1,z_2,z_3) = K_i(z_1-z_2)\,K_i(z_1-z_3) + K_i(z_2-z_1)\,K_i(z_2-z_3) + K_i(z_3-z_1)\,K_i(z_3-z_2),\end{equation*}
 is the obvious analogue of the right hand side of~\eqref{curvatura} for the kernel $K_i(x)=x_i^{2n-1}/|x|^{2n}$. 
In \cite{cmpt} it was shown that for any three distinct points $z_1,z_2,z_3\in\Rd$, the quantities $p_i(z_1,z_2,z_3)$, $1\le i\le 2$, are also non negative and they vanish 
if and only if the three points are colinear. 

The relationship between the quantity $p_i(z_1,z_2,z_3)$, $1\leq
i\leq 2$, and the $L^2$~estimates of the operator with
kernel~$x_i^{2n-1}/|x|^{2n}$ is as follows. Take a compactly supported positive Radon
measure~$\mu$ in $\Rd$ with linear growth. Given $\ep>0$ consider
the truncated transform $T^i_\ep(\mu)$ of $\mu$
associated with the kernel $K_i$, as in Section \ref{sectionsketch}. Then we
have (see in~\cite{mv} the argument for the Cauchy integral
operator)
\begin{equation*}
\left|\int|T^i_{\ep}(\mu)(x)|^2\,d\mu(x)-\frac 1
3p_{i,\ep}(\mu)\right|\leq C\|\mu\|,
\end{equation*}
$C$ being a positive constant depending only on $n$ and the linear growth constant of $\mu$, and
$$
p_{i,\ep}(\mu)=\underset{S_\ep}{\iiint}p_i(x,y,z)\,d\mu(x)\,d\mu(y)\,d\mu(z),
$$
with
$$
S_\ep=\{(x,y,z):|x-y|>\ep,\, |x-z|>\ep \text{ and }|y-z|>\ep\}.
$$

It is worth saying now that for $n=1$ and $i=1,2$, $\displaystyle{p_i(z_1,z_2,z_3)=\frac 1 2c(z_1,z_2,z_3)^2}$. For $n>1$, it is in general not true that $p_i(z_1,z_2,z_3)$, $i=1,2$, is comparable 
to Menger curvature $\displaystyle{c(z_1,z_2,z_3)^2}$.
The next two lemmas show that the sum of the above defined permutations, $\displaystyle{p_1(z_1,z_2,z_3)+p_2(z_1,z_2,z_3)}$ is comparable to Menger curvature, $\displaystyle{c(z_1,z_2,z_3)^2}$.

\begin{lemma}
\label{lowerbound}
There exists a constant $c_1=c_1(n)$, such that for all distinct points $z_1,z_2,z_3\in\Rd$,
$$p_1(z_1,z_2,z_3)+p_2(z_1,z_2,z_3) \geq c_1 c(z_1,z_2,z_3)^2.$$
\end{lemma}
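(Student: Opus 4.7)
The plan is to argue by homogeneity and compactness. Each kernel $K_i$ is homogeneous of degree $-1$ in $\Rd\setminus\{0\}$, so $p_1+p_2$ is homogeneous of degree $-2$ in the configuration $(z_1,z_2,z_3)$; the Menger curvature $c(z_1,z_2,z_3)$ is likewise homogeneous of degree $-1$ (the circumradius scales linearly with the points), so $c^2$ is homogeneous of degree $-2$ as well. Both quantities are translation invariant. After normalizing, say by $z_1=0$ and $\max_{i\ne j}|z_i-z_j|=1$, we work on a compact parameter space.

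On this compact space both $p_1+p_2$ and $c^2$ are continuous and non-negative. Recalling from \cite{cmpt} that $p_i\ge 0$ with equality exactly when $z_1,z_2,z_3$ are collinear (and noting that $c^2$ enjoys the same characterization), the ratio $(p_1+p_2)/c^2$ is well defined, continuous and strictly positive on the open set $U$ of normalized non-collinear configurations. The lemma is thus equivalent to showing that this ratio admits a strictly positive continuous extension to the collinear boundary of $U$.

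To analyze the boundary, fix a collinear limit configuration $w_1,w_2,w_3$ on a line of direction angle $\theta$, and parameterize nearby configurations by $z_j=w_j+\epsilon\,\xi_j v$, where $v$ is the unit vector perpendicular to the line and $\xi_j\in\mathbb R$ are the normal deviations. A direct expansion gives $c^2=A(w,\xi)\,\epsilon^2+O(\epsilon^3)$, with $A$ strictly positive and independent of $\theta$ (the $\epsilon$-linear change of each side length vanishes because $v\perp w_i-w_j$). Using the identity $p_i=K_i(z_1-z_2)K_i(z_1-z_3)-K_i(z_1-z_2)K_i(z_2-z_3)+K_i(z_1-z_3)K_i(z_2-z_3)$ together with the collinear vanishing of $p_i$ recalled from \cite{cmpt}, the constant and linear terms in $\epsilon$ cancel, leaving $p_1+p_2=B_n(\theta,w,\xi)\,\epsilon^2+O(\epsilon^3)$ with $B_n\ge 0$.

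The main obstacle is establishing the strict positivity of $B_n$ and the uniform lower bound $B_n/A\ge c_1(n)$. Writing $B_n=B_n^{(1)}+B_n^{(2)}$, where each summand is the non-negative quadratic coefficient of the corresponding $p_i$, a direct second-derivative computation shows that $B_n^{(1)}>0$ when $\theta=0$ (one checks this on an aligned thin triangle, where $p_1\sim n\epsilon^2/(t^2(1-t)^2)$), while $B_n^{(2)}>0$ at $\theta=\pi/2$ by the same computation after exchanging the roles of $K_1$ and $K_2$. One then verifies that the two trigonometric functions $\theta\mapsto B_n^{(1)}$ and $\theta\mapsto B_n^{(2)}$ have no common zero on the circle: their sum is an explicit non-negative trigonometric polynomial whose coefficients are strictly positive rational functions of the collinear geometry and of $\xi$, so the sum is strictly positive everywhere. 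Consequently $B_n/A$ is a strictly positive continuous function on the compact normalized parameter space, hence attains a positive minimum, and that minimum is the desired $c_1(n)$.
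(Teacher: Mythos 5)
Your compactness-and-Taylor-expansion strategy is reasonable in outline, but the proof has a genuine gap precisely where the lemma's content lies: the positivity of the quadratic coefficient $B_n=B_n^{(1)}+B_n^{(2)}$ is asserted, not established. The key claim that $B_n^{(1)}(\theta)$ and $B_n^{(2)}(\theta)$ have no common zero is supported only by the unsubstantiated statement that their sum is ``an explicit non-negative trigonometric polynomial whose coefficients are strictly positive,'' which is both circular (it restates positivity without proving it) and dubious: positive coefficients do not force positivity of a trigonometric polynomial (consider $1+\cos\theta$), and a direct look at the expansion shows that for $n>1$ the function $B_n^{(1)}$ vanishes identically at $\theta=\pi/2$ (the $x_1$-differences are then $O(\epsilon)$, so $p_1=O(\epsilon^{4n-2})=o(\epsilon^2)$), while the $\theta$-dependence in between involves $\tan^2\theta$, not an obviously positive trigonometric structure. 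Even the check you sketch at $\theta=0$ is imprecise: taking $w_1=0$, $w_2=(1,0)$, $w_3=(t,0)$, the leading coefficient of $p_1$ is proportional to $\eta^2$, where $\eta=\xi_3-(1-t)\xi_1-t\xi_2$ measures the failure of the deviations $\xi_j$ to preserve collinearity; the same $\eta^2$ factor appears in $c^2$, so the ratio of quadratic coefficients is $n/4$, but this exposes a degenerate stratum $\eta=0$ on the collinear boundary where both quantities vanish to higher than second order and your two-term expansion says nothing. Finally, the normalization $\max_{i\ne j}|z_i-z_j|=1$ does not rule out $\min_{i\ne j}|z_i-z_j|\to 0$, so the compactified parameter space has a coincident-point boundary where the ratio has no unique limit, and the ``continuous extension'' framing breaks down there.

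The paper's proof takes a much more direct route and avoids all of these issues. After translating $z_1$ to the origin, it observes that at least two of the three sides of the triangle $(0,z,w)$ make an angle $\geq\pi/4$ with the vertical axis (or else with the horizontal axis, in which case one argues with $p_2$ in place of $p_1$). If the third side $L_{z,w}$ is also not too close to vertical, a pointwise lower bound for $p_1$ alone from \cite{cmpt}, in terms of $\sin^2(z,w)$, already gives the estimate; if $L_{z,w}$ is nearly vertical, a hands-on expansion of $p_1$ isolates a dominant positive term $A\geq (1/4)^{2n}/|w|^2$ that beats the remainder, and since $c(0,z,w)\leq 2/|w|$ one again gets $p_1\geq c_1\,c(0,z,w)^2$. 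This produces an explicit constant $c_1(n)$ and sidesteps both the compactness argument and the $\theta$-dependent Taylor computations you are deferring; those computations, carried out in full, would be at least as laborious as the paper's two-case analysis.
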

\begin{proof} It suffices to prove the claim for any triple $(0,z,w), z\neq w \in \Rd \setminus\{0\}$. For any line $L$ denote by $\theta_V (L)$ and $\theta_H(L)$ the 
smallest angle that $L$ forms with the vertical and horizontal axes respectively. Then at least two of the angles,
$$\theta_V (L_{0,z}),\quad \theta_V(L_{0,w}),\quad \theta_V(L_{z,w})$$
or at least two of the angles
$$\theta_H(L_{0,z}),\quad \theta_H(L_{0,w}),\quad \theta_H(L_{z,w})$$
are greater or equal than $\pi/4$.
Without loss of generality we can assume that
\begin{equation}
\label{farfromvert}
\theta_V (L_{0,z})\geq \frac{\pi}{4} \quad \text{and} \quad \theta_V(L_{0,w}) \geq \frac{\pi}{4}.
\end{equation}
Now let $\theta= \theta_V (L_{z,w})$. Let $c$ be some very small constant, depending on $n$, that will be chosen later. \\

\textit{Case 1}: $\theta \geq c$.

As in Lemma 2.3 in \cite{cmpt}, we have that for $z=(x,y)$ and $w=(a,b)$,
\begin{equation}\label{dinouimig}
p_1(0,z,w) \geq n \left(\frac{|x|}{|z|}\right)^{2n-2}\left(\frac{|a|}{|w|}\right)^{2n-2}\left(\frac{|x-a|}{|z-w|}\right)^{2n-2} \frac{\sin ^2(z,w)}{|z-w|^2}.
\end{equation}
By (\ref{farfromvert}) we have that
\begin{equation}\label{comparab}
\frac{|x|}{|z|}> \frac{1}{2}, \qquad \frac{|a|}{|w|}> \frac{1}{2}
\end{equation}
and by the assumption in this case,
$$\frac{|x-a|}{|z-w|}\geq \sin c.$$
Furthermore,
$$c(0,z,w)=\dfrac{2 \sin(z,w)}{|z-w|}.$$
By \eqref{dinouimig},
$$p_1(0,z,w)\geq c_1 c(0,z,w)^2,$$
for some positive constant $c_1$ depending on $n$.\newline

\textit{Case 2}: $\theta < c$.

In this case, notice that by \eqref{comparab},
\begin{equation*}
\begin{split} 
\big||x|-|a|\big| &\leq |x-a| =|z-w| \sin \theta  \leq |z|\sin \theta + |w| \sin \theta \\
&\leq 2 |x| \sin \theta + 2 |a| \sin \theta.
\end{split}
\end{equation*}
Hence,
$$\frac{1-2 \sin \theta}{1+2 \sin \theta} |a| \leq |x| \leq \frac{1+2 \sin\theta}{1-2 \sin \theta}|a|$$
and since $\theta<c$ and $c$ will be chosen very small, it follows that
\begin{equation}
\label{xcompa}
\frac{|a|}{2} \leq |x| \leq 2 |a|.
\end{equation}

Combining (\ref{xcompa}) and (\ref{comparab}) we obtain that 
\begin{equation}
\label{zcompw}
\frac{|w|}{4} \leq |z| \leq 4 |w|.
\end{equation}
Expanding $p_1 (0,z,w)$ we get
\begin{equation*}
\begin{split}
p_1(0,z,w)&=\frac{x^{2n-1}a^{2n-1}}{|z|^{2n}|w|^{2n}}+\frac{(x-a)^{2n-1}}{|z-w|^{2n}}\left(\frac{x^{2n-1}}{|z|^{2n}}-\frac{a^{2n-1}}{|w|^{2n}} \right) \\
&=A+B,
\end{split}
\end{equation*}
where the last equality is a definition for $A$ and $B$.
Since
\begin{equation*}
|x^{2n-1}-a^{2n-1}|\leq |x-a|\left(|x|^{2n-2}+|x|^{2n-3}|a|+ \dots + |x||a|^{2n-3}+ |a|^{2n-2} \right),
\end{equation*}
then by (\ref{xcompa}),
\begin{equation}
\label{wev1}
|x^{2n-1}-a^{2n-1}| \leq (2n-1)2^{2n-2}|x-a||x|^{2n-2}.
\end{equation} 
Arguing in the same way and using (\ref{zcompw}) we obtain
\begin{equation}
\label{wev2}
\frac{\big||w|^{2n}-|z|^{2n}\big|}{|z|^{2n}|w|^{2n}} \leq 8n\;4^{2n-1} \frac{\big||z|-|w|\big|}{|w|^{2n+1}}.
\end{equation} 
Notice that
\begin{equation*}
\frac{x^{2n-1}}{|z|^{2n}}-\frac{a^{2n-1}}{|w|^{2n}}=\frac{x^{2n-1}-a^{2n-1}}{|z|^{2n}}+a^{2n-1}\left( \frac{1}{|z|^{2n}}-\frac{1}{|w|^{2n}} \right).
\end{equation*}
Therefore from (\ref{wev1}) and (\ref{wev2}) we get
\begin{equation*}
\begin{split}
|B|&\leq \frac{(\sin \theta )^{2n-1}}{|z-w|} \left(\frac{(2n-1)2^{2n-2}|x-a||x|^{2n-2}}{|z|^{2n}}+\frac{8n\;4^{2n-1}\big||z|-|w|\big||a|^{2n-1}}{|w|^{2n+1}} \right) \\
&\leq (\sin \theta )^{2n-1} \left(\frac{(2n-1)2^{2n-2}}{|z|^{2}}+\frac{8n\;4^{2n-1}}{|w|^{2}} \right)\leq (\sin \theta )^{2n-1}\left( \frac{16n4^{2n-1}}{|w|^2} \right).
\end{split}
\end{equation*}
On the other hand, by (\ref{comparab}) and \eqref{zcompw},
$$|A|=\left( \frac{|x|}{|z|}\right)^{2n-1}\left( \frac{|a|}{|w|}\right)^{2n-1}\frac{1}{|w||z|}\geq \left( \frac{1}{4} \right)^{2n} \frac{1}{|w|^2}$$
Therefore choosing $c\leq \dfrac{1}{10^4 n}$ we obtain that
$$p_1(0,z,w) \geq \frac{1}{2}\left( \frac{1}{4} \right)^{2n} \frac{1}{|w|^2}.$$
Since it follows easily that $c(0,z,w) \leq \dfrac{2}{|w|},$ the proof is complete.
\end{proof}

\begin{lemma}
There exists a positive constant $C=C(n)$ such that for all distinct points $z_1,z_2,z_3\in\Rd$,
$$p_i(z_1,z_2,z_3)\le C\; c(z_1,z_2,z_3)^2,\;\; 1\le i\le 2.$$
\end{lemma}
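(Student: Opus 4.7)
My plan is to recast the claim as a polynomial inequality and exploit the divisibility structure of $p_i$ forced by its vanishing on collinear triples. By translation I place $z_3=0$ and write $u=z_1$, $v=z_2$; using the oddness of $K_i$,
$$p_i(u,v,0)=K_i(u)K_i(v)+K_i(u-v)\bigl[K_i(u)-K_i(v)\bigr].$$
Clearing denominators,
$$P_i(u,v):=p_i(u,v,0)\,|u|^{2n}|v|^{2n}|u-v|^{2n}=u_i^{2n-1}v_i^{2n-1}|u-v|^{2n}+(u_i-v_i)^{2n-1}\bigl[u_i^{2n-1}|v|^{2n}-v_i^{2n-1}|u|^{2n}\bigr]$$
is a polynomial of degree $6n-2$ in $u_1,u_2,v_1,v_2$, and the lemma is equivalent to
$$P_i(u,v)\le 4C\,(u\wedge v)^2\,|u|^{2n-2}|v|^{2n-2}|u-v|^{2n-2},\qquad u\wedge v:=u_1v_2-u_2v_1.$$

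My first step will be to extract the factor $(u\wedge v)^2$. The collinear-vanishing identity used in \cite{cmpt} gives $P_i(u,\lambda u)=0$ for every $\lambda\in\mathbb{R}$, so $P_i$ vanishes on the irreducible real hypersurface $\{u\wedge v=0\}$ and therefore $(u\wedge v)\mid P_i$ in $\mathbb{R}[u_1,u_2,v_1,v_2]$. The positivity $p_i\ge 0$ proved in \cite{cmpt} forces $P_i\ge 0$, while $u\wedge v$ itself changes sign across the collinear variety; hence the quotient $P_i/(u\wedge v)$ must also vanish there, giving $(u\wedge v)^2\mid P_i$. I therefore write $P_i(u,v)=(u\wedge v)^2\,R_i(u,v)$ with $R_i$ a nonnegative polynomial, homogeneous of degree $6n-6=3(2n-2)$.

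What remains is to bound $R_i(u,v)\le C(n)\,|u|^{2n-2}|v|^{2n-2}|u-v|^{2n-2}$. I will show that $R_i$ vanishes to order $\ge 2n-2$ at each of the three degenerate loci $\{u=0\}$, $\{v=0\}$, $\{u=v\}$. For $\{u=0\}$, expanding $P_i(\varepsilon u',v)$ in powers of $\varepsilon$, the leading $O(\varepsilon^{2n-1})$ contributions from the two summands of $P_i$ cancel identically (exactly the same cancellation that makes $p_i$ vanish on collinear triples), so $P_i(\varepsilon u',v)=O(\varepsilon^{2n})$; dividing by $(u\wedge v)^2=O(\varepsilon^2)$ yields $R_i(\varepsilon u',v)=O(\varepsilon^{2n-2})$. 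The symmetry $P_i(u,v)=P_i(v,u)$ handles $\{v=0\}$, and the substitution $v=u+\varepsilon w$ handles $\{u=v\}$ in the same way. Consequently the ratio $R_i(u,v)/(|u|^{2n-2}|v|^{2n-2}|u-v|^{2n-2})$ is a $0$-homogeneous function whose only possible singularities lie on the three loci just discussed; the vanishing-order estimates show it is bounded in a neighborhood of each locus, and the loci are pairwise disjoint on the unit sphere $|u|^2+|v|^2=1$, so compactness of the sphere gives a uniform bound, which is the desired $C(n)$. The main obstacle I anticipate is cleanly carrying out the cancellation $P_i(\varepsilon u',v)=O(\varepsilon^{2n})$ for general $n>1$; however this is exactly the algebraic identity already used in \cite{cmpt} to establish $p_i\ge 0$ together with its vanishing on collinear configurations, so no fundamentally new computation is needed.
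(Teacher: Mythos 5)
Your proof is correct, and it takes a genuinely different route from the paper's. The paper proceeds computationally: it invokes the explicit identity $p_1(0,z,w)=A(z,w)/(|z|^{2n}|w|^{2n}|z-w|^{2n})$ with $A=\sum_k\binom{n}{k}x^{2(n-k)}a^{2(n-k)}(x-a)^{2(n-k)}F_k(z,w)$ from \cite{cmpt}, normalizes so that $|z-w|$ is the longest side of the triangle, and treats three cases according to whether $a$ or $b$ vanishes; in the main case it passes to $s=y/b$, $t=x/a$, factors each $F_k$ as $a^{4k-2}b^{2k}(s-t)^2Q(s,t)$ after verifying $P_t(t)=P_t'(t)=0$ directly, and then bounds $Q$ by a term-by-term estimate. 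Your argument replaces that casework and explicit factorization with a soft structural one: after clearing denominators, $P_i$ is a nonnegative polynomial vanishing on the irreducible real quadric $\{u\wedge v=0\}$, so the real Nullstellensatz gives $(u\wedge v)\mid P_i$; the sign change of $u\wedge v$ across its zero set (smooth and of codimension one away from the origin) together with $P_i\ge 0$ promotes this to $(u\wedge v)^2\mid P_i$; and a Taylor computation at the three degenerate loci plus homogeneity and compactness of the unit sphere produce the constant. One small caveat: the $O(\varepsilon^{2n})$ cancellation at $\{u=0\}$ is a slightly finer statement than the collinear vanishing quoted from \cite{cmpt}, which a priori gives only one order of vanishing; it does hold and follows in one line from your displayed formula for $P_i$, but it should be verified rather than merely cited. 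The paper's computation has the merit of being explicit and essentially self-contained given the $F_k$-identity; your argument has the merit of showing the lemma is a formal consequence of just two qualitative facts from \cite{cmpt} --- nonnegativity of $p_i$ and its vanishing exactly on collinear triples --- together with degree bookkeeping, and it would apply verbatim to any odd homogeneous kernel of degree $-1$ enjoying those same two properties.
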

\begin{proof} Without loss of generality fix $i=1$.
Since $p_1$ is translation invariant, it is enough to estimate the permutations $p_1(0,z,w)$ for any two distinct points $z=(x,y)$, $w=(a,b)\in\Rd \setminus \{0\}$ such that
\begin{equation}
\label{trinv}
|z| \leq |z-w| \quad \text{and} \quad |w|\leq |z-w|.
\end{equation}  
As shown in Proposition 2.1 in \cite{cmpt},
\begin{equation}\label{idcmpt}
p_1(0,z,w)=\frac{A(z,w)}{|z|^{2n}|w|^{2n}|z-w|^{2n}},
\end{equation}
where \begin{equation}\label{estrella}A(z,w)=\sum_{k=1}^n\binom{n}{k}x^{2(n-k)}a^{2(n-k)}(x-a)^{2(n-k)}F_k(z,w)\end{equation}
and
$$F_k(z,w)=x^{2k-1}a^{2k-1}(y-b)^{2k}+x^{2k-1}(x-a)^{2k-1}b^{2k}-a^{2k-1}(x-a)^{2k-1}y^{2k}.$$
Notice also that
\begin{equation}
\label{extprod}
(xb-ay)^2=|z|^2|w|^2 \sin ^2(z,w)=\frac{1}{4}|z|^2|w|^2|z-w|^2 c(0,z,w)^2.
\end{equation}
\textit{Case 1}: $a=0$.

In this case, notice that $F_n(z,w)=x^{4n-2}b^{2n}$ and all sumands in \eqref{estrella} are zero, apart from the last one. Therefore, using 
(\ref{extprod}) and (\ref{trinv}),
\begin{equation*}
\begin{split}
p_1(0,z,w)&= \frac{x^{4n-4}b^{2n-2}}{|z|^{2n}|w|^{2n}|z-w|^{2n}} x^2 b^2\\
&=\frac{1}{4}\frac{|z|^2 |w|^2 |z-w|^2 x^{4n-4} b^{2n-2} }{|z|^{2n}|w|^{2n}|z-w|^{2n}}c(0,z,w)^2 \\
& \leq \frac{1}{4}\frac{|x|^{2n-2}}{|z-w|^{2n-2}}c(0,z,w)^2\leq \frac{1}{4}c(0,z,w)^2.
\end{split}
\end{equation*}
\textit{Case 2}: $a\neq 0$ and $b \neq 0$.

Let $t=x/a$ and $s=y/b$. Then $F_k$ can be rewritten as follows
\begin{equation*}
\begin{split}
\frac{F_k(z,w)}{a^{4k-2}b^{2k}}&=\left(\frac x a\right)^{2k-1}\left(\frac y b-1\right)^{2k}+\left(\frac x a\right)^{2k-1}\left(\frac x a-1\right)^{2k-1}-\left(\frac x a-1\right)^{2k-1}\left(\frac y b\right)^{2k}\\\\&=t^{2k-1}(s-1)^{2k}+t^{2k-1}(t-1)^{2k-1}-(t-1)^{2k-1}s^{2k}\\\\&=P(s,t),
\end{split}
\end{equation*}
the last identity being the definition of the polynomial $P(s,t)$. Then, for some polynomial $Q(s,t)$,
$$P(s,t)=(s-t)^2 Q(s,t),$$
because if we consider $P$ as a polynomial of the variable $s$ with parameter $t$ , i.e. $P_t(s):=P(s,t)$, we obtain easily that 
$$P_t(t)=P'_t(t)=0.$$

It is also immediate to check that the degree of $P$ is $4k-2$ and the smallest degree of the monomials of $P$ is $2k$. 

Therefore
$$Q(s,t)=\sum_{l+l'=2k-2}^{4k-4}c_{l,l'} t^l s^{l'}.$$
By  (\ref{extprod}) and (\ref{trinv}), for each $1\le k\le n$,
\begin{equation*}
\begin{split}
|F_k(z,w)|&=\left| a^{4k-4}b^{2k-2}(xb-ay)^2Q\left(\frac x a,\frac y b\right)\right|\\
&=\frac 1 4 |a|^{4k-4}|b|^{2k-2}|z|^2|w|^2|z-w|^2c(0,z,w)^2\;\left|Q\left(\frac x a,\frac y b\right)\right|\\
&\le C(n)|a|^{4k-4}|b|^{2k-2}\;|z|^2|w|^2|z-w|^2\;c(0,z,w)^2\sum_{l+l'=2k-2}^{4k-4} \left| \frac x a \right|^l \left| \frac y b \right|^{l'}\\
&= C(n)|z|^2|w|^2|z-w|^2 c(0,z,w)^2\sum_{l+l'=2k-2}^{4k-4} |a|^{4k-4-l}|b|^{2k-2-l'}|x|^l |y|^{l'}\\
&\le C(n)|z|^2|w|^2|z-w|^2 c(0,z,w)^2\sum_{l+l'=2k-2}^{4k-4} |w|^{6k-6-(l+l')}|z|^{l+l'}\\
&= C(n)|z|^{2k}|w|^{2k}|z-w|^{2}\;c(0,z,w)^2\sum_{l+l'=2k-2}^{4k-4}|w|^{4k-4-(l+l')}|z|^{l+l'-2k+2}\\
&\le C(n)|z|^{2k}|w|^{2k}|z-w|^{2}\;c(0,z,w)^2\sum_{l+l'=2k-2}^{4k-4}|z-w|^{4k-4-(l+l')+l+l'-2k+2}\\
&\le C(n)|z|^{2k}|w|^{2k}|z-w|^{2k}\;c(0,z,w)^2.
\end{split}
\end{equation*}

Then, from \eqref{idcmpt} we conclude that
\begin{equation*}
\begin{split}
p_1(0,z,w)&=\sum_{k=1}^n\binom{n}{k}\frac{x^{2(n-k)}a^{2(n-k)}(x-a)^{2(n-k)}}{|z|^{2n}|w|^{2n}|z-w|^{2n}}F_k(z,w)
\\\\&\le C(n)\left(\frac{|x|}{|z|}\right)^{2n}\left(\frac{|a|}{|w|}\right)^{2n}\left(\frac{|x-a|}{|z-w|}\right)^{2n}\;c(0,z,w)^2
\sum_{k=1}^n\binom{n}{k}\\\\&\le C(n)\;c(0,z,w)^2.
\end{split}
\end{equation*}

\textit{Case 3}: $b=0$.

In this case $F_k(z,w)=a^{2k-1}y^{2k}(x^{2k-1}-(x-a)^{2k-1})$. Hence by (\ref{extprod}) 
\begin{equation}
\label{f1}
F_1(z,w)=a^2y^2=\frac{1}{4}|z|^2 |w|^2|z-w|^2 c(0,z,w)^2.
\end{equation}
For $1<k\leq n$, by using \eqref{extprod} again,
\begin{equation*}
\begin{split}
F_k(z,w)&=a^{2k-1}y^{2k}(x^{2k-1}-(x-a)^{2k-1})\\
&=a^2 y^2 a^{2k-3} y^{2k-2}(x^{2k-1}-(x-a)^{2k-1})\\
&=\frac{1}{4}|z|^2|w|^2|z-w|^2 c(0,z,w)^2a^{2k-3}y^{2k-2}\sum_{j=0}^{2k-2}{{2k-1}\choose{j}}x^j a^{2k-1-j}.
\end{split}
\end{equation*}
And using (\ref{trinv}) we estimate,
\begin{equation*}
\begin{split}
|F_k(z,w)| &\leq C(n)c(0,z,w)^2|z|^2|w|^2|z-w|^2|w|^{2k-3}|z|^{2k-2} \sum_j^{2k-2} |z|^j |w|^{2k-1-j} \\
&\leq C(n)c(0,z,w)^2|z|^{2k}|w|^{2k}|z-w|^2  \sum_{j=0}^{2k-1} |z|^j |w|^{2k-2-j} \\
&= C(n)c(0,z,w)^2|z|^{2k}|w|^{2k}|z-w|^2 \sum_{j=0}^{2k-1} |z-w|^j |z-w|^{2k-2-j} \\
&\leq C(n)c(0,z,w)^2 |z|^{2k}|w|^{2k}|z-w|^{2k}.
\end{split}
\end{equation*}
The previous estimate combined with (\ref{f1}) implies that for $1\leq k \leq n$
$$|F_k(z,w)| \leq C(n)c(0,z,w)^2 |z|^{2k}|w|^{2k}|z-w|^{2k}.$$
Therefore, from \eqref{idcmpt} we derive that 
$$p_1(0,z,w)\leq C(n)\;c(0,z,w)^2$$
in an identical manner to case 1.

\end{proof}

From these two lemmas and the relationship between the symmetrization method and the $L^2$-norm we obtain the following:

\begin{co}\label{l2bound}
Let $S_n$ be the operator associated with the vectorial kernel $K=(K_1,K_2)$, with $K_i=x_i^{2n-1}/|x|^{2n}$, $n\in\N$ and $1\le i\le 2$.
If $\mu$  is a compactly supported positive measure in the plane
having linear growth, the Cauchy transform of $\mu$ is bounded on
$L^2(\mu)$ if and only if $S_n$ is bounded on $L^2(\mu)$.
\end{co}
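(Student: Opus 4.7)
My approach is to combine the Melnikov--Verdera symmetrization identity, recalled in Section~\ref{sectionpermu}, with the pointwise comparability of the permutation sums $p_1+p_2$ and Menger curvature squared established in Lemma~\ref{lowerbound} and the subsequent lemma, and then pass from these trilinear estimates to $L^2(\mu)$ operator norms by means of the non-homogeneous $T(1)$ theorem of Nazarov--Treil--Volberg in~\cite{ntv}.

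First I would write the symmetrization identity for each component $T^i_{\varepsilon}$ and add them up. Since $|S_{n,\varepsilon}f|^2=|T^1_{\varepsilon}f|^2+|T^2_{\varepsilon}f|^2$, this gives
\[
\Big|\int |S_{n,\varepsilon}\mu|^2\,d\mu -\tfrac{1}{3}\big(p_{1,\varepsilon}(\mu)+p_{2,\varepsilon}(\mu)\big)\Big|\le C\|\mu\|.
\]
The analogous identity for the Cauchy transform, with $\tfrac{1}{6}c^2_{\varepsilon}(\mu)$ in place of $\tfrac{1}{3}(p_{1,\varepsilon}+p_{2,\varepsilon})(\mu)$, is standard. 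The two lemmas immediately before the corollary, applied triplewise on $S_{\varepsilon}$, upgrade to the uniform comparability
\[
c_1\,c^2_{\varepsilon}(\mu)\le p_{1,\varepsilon}(\mu)+p_{2,\varepsilon}(\mu)\le C\,c^2_{\varepsilon}(\mu),
\]
and, crucially, the same comparability holds for any restriction $\chi_Q\mu$ with $\|\mu\|$ replaced by $\mu(Q)$ on the error side. Combining these three ingredients yields, for every square $Q$ and every $\varepsilon>0$,
\[
\int_Q |S_{n,\varepsilon}(\chi_Q\mu)|^2 d\mu \approx \int_Q |C_{\varepsilon}(\chi_Q\mu)|^2 d\mu + O(\mu(Q)).
\]

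To convert this local equivalence into the equivalence of $L^2(\mu)$-boundedness, I would invoke the non-homogeneous $T(1)$ theorem: for a measure with linear growth, an antisymmetric Calder\'on--Zygmund operator is bounded on $L^2(\mu)$ if and only if the local bound $\int_Q |T_{\varepsilon}(\chi_Q\mu)|^2 d\mu\le C\mu(Q)$ holds uniformly in $\varepsilon$ and in squares $Q$. Each $K_i$ is odd, homogeneous of degree $-1$ and smooth off the origin, hence a standard antisymmetric Calder\'on--Zygmund kernel, so the theorem applies to each $T^i$ individually, and the vector-valued $L^2$-boundedness of $S_n$ is equivalent to the simultaneous $L^2$-boundedness of $T^1$ and $T^2$. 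Therefore, the displayed local equivalence identifies the Nazarov--Treil--Volberg characterizations for $S_n$ and for the Cauchy transform, giving the desired biconditional.

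The only potential obstacle is the verification that the Nazarov--Treil--Volberg $T(1)$ criterion passes cleanly to the vectorial setting, but this is essentially automatic once one observes that $\|S_{n,\varepsilon}f\|_{L^2(\mu)}^2=\|T^1_{\varepsilon}f\|_{L^2(\mu)}^2+\|T^2_{\varepsilon}f\|_{L^2(\mu)}^2$. All the substantive work has already been absorbed into the two preceding lemmas, where the positivity of the symmetrization of the kernels $x_i^{2n-1}/|x|^{2n}$ (originally due to Farag) is exploited to show that $p_1+p_2$ is pointwise comparable to $c^2$.
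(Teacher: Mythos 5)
Your proposal is correct and follows essentially the same route the paper intends: sum the component-wise Melnikov--Verdera symmetrization identities, use the pointwise comparability of $p_1+p_2$ with $c^2$ from Lemma~\ref{lowerbound} and the lemma following it, and then pass from the resulting local $L^2$ estimates to operator boundedness via the non-homogeneous $T(1)$/$T(b)$ criterion of Nazarov--Treil--Volberg applied to the antisymmetric kernels $K_i$. The paper leaves the last step implicit (it simply says ``from these two lemmas and the relationship between the symmetrization method and the $L^2$-norm''), and your write-up supplies exactly the expected filling-in.
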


We state now inequalities \eqref{GammanopGammaop} and  \eqref{GammaopGammanop}, because they are immediate consequences of the preceding corollary.

\begin{co}\label{desigualtats}
There exists a positive constant $C$ such that for any compact set $E\subset\Rd$, 
$$C^{-1}\;\gamma_{\op}(E)\le \gamma_{n, \op}(E)\leq C \,\gamma_{\op}(E).$$ 
\end{co}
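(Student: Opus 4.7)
The plan is to deduce both inequalities as a direct consequence of Corollary \ref{l2bound}, via a straightforward rescaling argument. The point is that Corollary \ref{l2bound} is in fact \emph{quantitative}: combining the symmetrization identity for the Cauchy transform with its two analogues for $T^1_\ep$ and $T^2_\ep$, and invoking Lemma \ref{lowerbound} together with the second lemma above---which sandwich $p_1(z_1,z_2,z_3)+p_2(z_1,z_2,z_3)$ between constant multiples of $c(z_1,z_2,z_3)^2$---one obtains a two-sided comparison of the shape
$$
\bigl|\,\|C_\ep(\mu)\|_{L^2(\mu)}^2 \;-\; c_0\sum_{i=1}^{2}\|T^i_\ep(\mu)\|_{L^2(\mu)}^2\,\bigr| \;\leq\; C\,\|\mu\|,
$$
valid for every positive measure $\mu$ of linear growth with constants depending only on $n$. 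A standard bootstrapping, exactly of the type used to derive Corollary \ref{l2bound}, then yields a dimensional $C_0$ such that $\|S_n\|_{L^2(\mu)} \leq C_0\bigl(\|C\|_{L^2(\mu)}+1\bigr)$ and, symmetrically, $\|C\|_{L^2(\mu)} \leq C_0\bigl(\|S_n\|_{L^2(\mu)}+1\bigr)$ for every $\mu\in L(E)$.

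With this norm comparison in hand, the passage to capacities is routine rescaling. I would start from any measure $\mu$ admissible for $\gamma_{n,\op}(E)$, so that $\mu\in L(E)$ and $\|S_n\|_{L^2(\mu)}\leq 1$. The estimate above yields $\|C\|_{L^2(\mu)}\leq C_1$ for some $C_1 = C_1(n)$. Set $\nu:=C_1^{-1}\mu$. Its linear growth constant is at most $C_1^{-1}\leq 1$, so $\nu\in L(E)$; since the Cauchy kernel is homogeneous of degree $-1$, the $L^2$-operator norm scales linearly with the underlying measure, giving $\|C\|_{L^2(\nu)} = C_1^{-1}\|C\|_{L^2(\mu)}\leq 1$. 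Hence $\nu$ is admissible for $\gamma_{\op}(E)$, so $\gamma_{\op}(E)\geq\nu(E) = C_1^{-1}\mu(E)$. Taking the supremum over admissible $\mu$ produces $\gamma_{n,\op}(E)\leq C_1\,\gamma_{\op}(E)$. The reverse inequality follows from the same argument with the roles of $C$ and $S_n$ exchanged.

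I do not expect any serious obstacle: as the remark preceding the corollary indicates, it is genuinely a bookkeeping consequence of Corollary \ref{l2bound}. The only minor care needed is to extract an \emph{explicit} operator-norm comparison from the qualitative ``boundedness if and only if boundedness'' statement; this is precisely what the error term $O(\|\mu\|)$ in the symmetrization identity provides, and it is absorbed cleanly after the scalar rescaling $\mu\mapsto C_1^{-1}\mu$ described above.
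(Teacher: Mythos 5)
Your proof is correct and fills in exactly the argument the paper leaves implicit (the text merely declares the corollary an ``immediate consequence'' of Corollary~\ref{l2bound}): extract the quantitative operator-norm comparison $\|S_n\|_{L^2(\mu)}\le C_0(\|C\|_{L^2(\mu)}+1)$ and its symmetric counterpart for $\mu\in L(E)$, then rescale $\mu\mapsto C_1^{-1}\mu$ using that the $L^2(\mu)$-operator norm of a convolution singular integral scales linearly under $\mu\mapsto\lambda\mu$. One small imprecision: the intermediate display
\begin{equation*}
\Bigl|\,\|C_\ep(\mu)\|_{L^2(\mu)}^2-c_0\sum_{i=1}^{2}\|T^i_\ep(\mu)\|_{L^2(\mu)}^2\,\Bigr|\le C\|\mu\|
\end{equation*}
cannot hold with a single constant $c_0$, since Lemma~\ref{lowerbound} and its companion give only a two-sided comparability $c_1\,c^2\le p_1+p_2\le c_2\,c^2$, not proportionality; what one actually obtains is the pair of one-sided bounds $\|C_\ep(\mu)\|_{L^2(\mu)}^2\le C\sum_i\|T^i_\ep(\mu)\|_{L^2(\mu)}^2+C\|\mu\|$ and its reverse, which is all your subsequent bootstrapping (testing on $\chi_Q\mu$ over cubes $Q$ and applying the non-homogeneous $T(1)$ theorem of Nazarov--Treil--Volberg) requires.
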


It is worth to mention that for $n=1$,  it was proven in \cite{mpv} that corollary \ref{l2bound} remains valid if the operator $S_1$ is replaced by one of its coordinates, $S_1^1$ or $S_1^2$, (here $S_1^i$ is the operator with kernel $x_i/|x|^2$, $i=1,2$). 
\section{Growth conditions and localization}\label{sectionlocal}

We need the following reproduction formula for the kernels $K_i(x)=x_i^{2n-1}/|x|^{2n}$:

\begin{lemma}\label{reproductionformula}
If a function $f(x)$ has continuous derivatives up to order one, then it is representable in the form
\begin{equation}\label{repro}f(x)=(\varphi_1*K_1)(x)+(\varphi_2*K_2)(x),\, x\in\Rd,
\end{equation}
where for $i=1,2,$ \begin{equation}\label{tilda}\varphi_i= S_i(\partial_i f):=c\partial_if+\widetilde S_i(\partial_i f),\end{equation} for some constant $c$ and Calder\'on-Zygmund operators $\widetilde S_1$ and  $\widetilde S_2$. \end{lemma}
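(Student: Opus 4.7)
The plan is to take Fourier transforms and realize \eqref{repro} as the inversion of the distributional divergence of $(K_1,K_2)$. First, I would compute this divergence: each $K_i$ is locally integrable (it behaves like $1/|x|$ near the origin), so integrating by parts against a test function $\varphi$ on the complement of $\{|x|<\varepsilon\}$ and keeping the boundary jump on $|x|=\varepsilon$ as $\varepsilon\to 0$ yields
$$\partial_1 K_1+\partial_2 K_2 \;=\; c_n\,\delta_0 + T,\qquad c_n := \int_{S^1}(\sigma_1^{2n}+\sigma_2^{2n})\,d\sigma > 0,$$
where $T$ is the principal-value part, a Calder\'on--Zygmund convolution kernel, homogeneous of degree $-2$, with even and (automatically) mean-zero profile on $S^1$. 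Convolving the display above against $f$ and commuting partial derivatives through the convolution gives
$$(c_n\,\delta_0 + T)*f \;=\; K_1 * \partial_1 f + K_2 * \partial_2 f.$$

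Next, I would introduce the convolution operator $B$ with Fourier symbol $\widehat B(\xi):=1/(c_n+\widehat T(\xi))$, so that $(c_n\,\delta_0+T)*B=\delta_0$. Assuming $\widehat B$ is well-defined and smooth on $S^1$, convolving the preceding identity with $B$ and using commutativity of convolution yields
$$f \;=\; K_1 * (B*\partial_1 f) + K_2 * (B*\partial_2 f),$$
which is exactly \eqref{repro} with $\varphi_i := B*\partial_i f$. Splitting $\widehat B = c + (\widehat B - c)$, where $c := \frac{1}{2\pi}\int_{S^1}\widehat B\,d\sigma$, we write $B = c\,\delta_0 + \widetilde S$, where $\widetilde S$ is the Fourier multiplier whose symbol is the mean-zero, degree-$0$ function $\widehat B - c$; Mihlin-H\"ormander then identifies $\widetilde S$ as a Calder\'on--Zygmund convolution operator. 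Thus $\varphi_i = c\,\partial_i f + \widetilde S(\partial_i f)$, matching the form in \eqref{tilda} with $\widetilde S_1 = \widetilde S_2 = \widetilde S$.

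The main obstacle will be to verify that $c_n + \widehat T(\xi)$ is bounded away from zero on $S^1$, which is needed both to invert $c_n\,\delta_0 + T$ and to identify $\widetilde S$ as a CZ operator. For $n=1$ this is immediate, since $K_i = \partial_i\log|x|$ and $\Delta\log|x| = 2\pi\delta_0$ in $\mathbb{R}^2$; hence $\widehat T \equiv 0$, $\widehat B \equiv (2\pi)^{-1}$, and no CZ correction appears. For $n\ge 2$, one must show directly that $\widehat T$ never takes the value $-c_n$ on the unit circle; a natural route is to exploit the positivity estimate $p_1+p_2 \gtrsim c(z_1,z_2,z_3)^2$ from Lemma~\ref{lowerbound}, which on the $L^2$-side forces a strict lower bound for the symbol $|\Phi_1|^2+|\Phi_2|^2$ with $\Phi_i(\xi):=2\pi i\,\xi_i\widehat{K_i}(\xi)$, and hence rules out simultaneous vanishing of $\Phi_1$ and $\Phi_2$; since $\Phi_1+\Phi_2 = c_n+\widehat T$, a separate argument (or a symmetric use of $(\Phi_1,\Phi_2)$ in place of $\Phi_1+\Phi_2$) then completes the non-vanishing step and allows the multiplier $B$ to be constructed.
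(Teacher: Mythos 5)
Your plan is a valid alternative route: rather than (as the paper does) inverting each scalar symbol $\widehat{K_i}(\xi)/\xi_i$ separately to build two operators $S_1,S_2$, you invert the single ``divergence symbol'' $\widehat{\partial_1 K_1+\partial_2 K_2}$ to build one operator $B$, getting $\widetilde S_1=\widetilde S_2$. Your distributional computation of the divergence is correct: the delta coefficient $c_n=\int_{S^1}(\sigma_1^{2n}+\sigma_2^{2n})\,d\sigma$ is right, and one can check (using $\int\cos^{2n}\theta\,d\theta/\int\cos^{2n-2}\theta\,d\theta=(2n-1)/(2n)$) that the p.v.\ remainder is indeed mean-zero on $S^1$, so $T$ is a genuine CZ kernel. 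The subsequent factorization $\widehat B=c+(\widehat B-c)$ with $\widetilde S$ the mean-zero part is the same device the paper uses (via \cite[Theorem 4.15]{duandikoetxea}).

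However, the crux of the lemma is the non-vanishing of the degree-$0$ symbol you need to invert, and this is precisely where your proposal has a real gap. You acknowledge the issue but propose to settle it via the curvature estimate of Lemma \ref{lowerbound}. That route does not obviously close: the inequality $p_1+p_2\gtrsim c^2$ is a statement about triple correlations (equivalently, about $L^2$-norms of the truncated operators against arbitrary linear-growth measures), and extracting from it a \emph{pointwise} lower bound on $|\Phi_1(\xi)|^2+|\Phi_2(\xi)|^2$ for $\xi\in S^1$ is itself a non-trivial step you do not supply. Even granting it, non-vanishing of the vector $(\Phi_1,\Phi_2)$ does not give non-vanishing of the scalar $\Phi_1+\Phi_2=c_n+\widehat T$ (the two complex numbers could cancel), and you defer this to ``a separate argument.'' The paper short-circuits all of this by a direct computation: Lemma \ref{fourier} shows $\widehat{K_i}(\xi)=c\,\xi_i\,p(\cdot)/|\xi|^{2n}$ where $p$ is a homogeneous polynomial of degree $2n-2$ with \emph{strictly positive} coefficients $b_{2m}$, the positivity coming from the combinatorial identity of Lemma \ref{coefficient}. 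In your framework, this immediately gives
$$c_n+\widehat T(\xi) = 2\pi i\bigl(\xi_1\widehat{K_1}(\xi)+\xi_2\widehat{K_2}(\xi)\bigr) = -2\pi c\,\frac{\xi_1^2\,p(\xi_1,\xi_2)+\xi_2^2\,p(\xi_2,\xi_1)}{|\xi|^{2n}},$$
which is a sum of same-signed terms, hence bounded away from $0$ on $S^1$. Without that explicit Fourier computation of $\widehat{K_i}$ (reduced to $\partial_1^{2n-1}E_n$ via the Lyons--Zumbrun formula and the beta-function identity), your proof does not close; with it, the curvature detour becomes unnecessary.
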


The proof of Lemma \ref{reproductionformula} is a consequence of the following two lemmas:

\begin{lemma}\label{coefficient}
For $m\geq 0$,
$$\sum_{k=0}^m\frac{(-1)^k\;2^{2k}\;k!}{(2k+1)!(m-k)!}=\frac 1{(2m+1)\;m!}.$$
\end{lemma}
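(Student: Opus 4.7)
My plan is to prove the identity by recognizing the Beta-function structure hidden in the summand and then collapsing the sum via the binomial theorem.

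First I would multiply both sides by $m!$ and use $\dfrac{m!}{(m-k)!}=\binom{m}{k}\,k!$ to rewrite the left hand side as
\begin{equation*}
\sum_{k=0}^{m}\binom{m}{k}\,(-1)^k\,2^{2k}\,\frac{(k!)^{2}}{(2k+1)!}.
\end{equation*}
The key observation is the Beta-integral identity
\begin{equation*}
\frac{(k!)^{2}}{(2k+1)!}=B(k+1,k+1)=\int_{0}^{1}t^{k}(1-t)^{k}\,dt,
\end{equation*}
which turns the sum into an integral of an elementary closed form.

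Next I would swap sum and integral (finite sum, so no convergence issue), pull the factor $2^{2k}=4^{k}$ inside, and apply the binomial theorem to obtain
\begin{equation*}
\int_{0}^{1}\sum_{k=0}^{m}\binom{m}{k}\bigl(-4t(1-t)\bigr)^{k}\,dt=\int_{0}^{1}\bigl(1-4t(1-t)\bigr)^{m}\,dt.
\end{equation*}
Here comes the punch line: $1-4t(1-t)=1-4t+4t^{2}=(1-2t)^{2}$, so the integrand is $(1-2t)^{2m}$. A change of variables $u=1-2t$ then gives
\begin{equation*}
\int_{0}^{1}(1-2t)^{2m}\,dt=\frac{1}{2}\int_{-1}^{1}u^{2m}\,du=\frac{1}{2m+1},
\end{equation*}
which, after dividing by $m!$, is exactly the claimed identity.

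I do not anticipate a serious obstacle: the only non-routine step is spotting the Beta-function representation (or equivalently, guessing that the sum should telescope after reduction to $(1-2t)^{2m}$). If for some reason one preferred an elementary approach, induction on $m$ would also work, using the standard shift $\binom{m+1}{k}=\binom{m}{k}+\binom{m}{k-1}$ and the recursion $(2k+1)!=(2k+1)(2k)!$; however, the integral representation above is both cleaner and conceptually clearer, so I would present that one.
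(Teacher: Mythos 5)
Your proof is correct, and it takes a genuinely different route from the paper's. The paper first rewrites the claim as saying that the sequence $a_k=\tfrac{(2^k k!)^2}{(2k+1)!}$ has binomial transform $\tfrac{1}{2m+1}$, then invokes the fact that the binomial transform is an involution to reduce the problem to the inverse identity $a_m=\sum_{k=0}^m(-1)^k\binom{m}{k}\tfrac{1}{2k+1}$, which it proves by integrating $x^{-1/2}(1-x)^m$ over $[0,1]$ and recognizing $B(\tfrac12,m+1)$. You instead attack the original sum head-on: you insert the Beta representation $\tfrac{(k!)^2}{(2k+1)!}=\int_0^1 t^k(1-t)^k\,dt$, interchange the finite sum and the integral, collapse the sum via the binomial theorem, and use the algebraic identity $1-4t(1-t)=(1-2t)^2$ to reduce to the elementary integral $\int_0^1(1-2t)^{2m}\,dt=\tfrac{1}{2m+1}$. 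Both routes ultimately rest on Beta/Gamma identities, but yours avoids the detour through the binomial-transform involution entirely, is self-contained, and requires no citation; the paper's version is a one-line reduction once you are willing to quote the involution property. Your version is arguably the cleaner presentation.
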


\begin{proof}
We will show that
\begin{equation}\label{binomtransform}
\sum_{k=0}^m (-1)^k\;\binom{m}{k}\;a_k=\frac 1{2m+1},
\end{equation}
where
$$a_k=\frac{(2^k\;k!)^2}{(2k+1)!}.$$
Notice that \eqref{binomtransform} is equivalent to saying that the binomial transform of the sequence $a_k$ is $1/(2m+1)$ (see \cite{k}). Since the binomial transform is an involution of sequences, \eqref{binomtransform} is equivalent to regaining the original sequence $a_m$ by the inversion formula

\begin{equation}\label{sum}
a_m=\sum_{k=0}^m(-1)^k\;\binom{m}{k}\frac 1{2k+1}.
\end{equation}

To prove this identity, consider the Newton binomial formula
$$(1-x)^m=\sum_{k=0}^m(-1)^k\binom{m}{k}\;x^k$$
and multiply on both sides by $x^{-1/2}$. Integration between $0$ and $1$ gives now

$$\int_0^1 (1-x)^mx^{-1/2}dx=2\sum_{k=0}^m(-1)^k\;\binom{m}{k}\frac 1{2k+1}.$$

Recall that  $$\int_0^1 (1-x)^mx^{-1/2}dx=B\left(\frac 1 2,m+1\right),$$
$B(x,y)$ being the beta function.
Since it is easily seen that $$B\left(\frac 1 2,m+1\right)=2\frac{(2^m\;m!)^2}{(2m+1)!}=2a_m,$$
\eqref{sum} follows.
\end{proof}

The next lemma computes the Fourier transform of the kernel $K_i=x_i^{2n-1}/|x|^{2n}$, $1\le i\le 2$, $n\ge 1$, by using Lemma \ref{coefficient}.

\begin{lemma}\label{fourier}
For $n\geq 1$, $1\le i\le 2$, \begin{equation}\label{fouriercomputation}
\widehat{K_i}(\xi)=c \frac{\xi_i}{|\xi|^{2n}}p(\xi_1,\xi_2),
\end{equation}
where $p(\xi_1,\xi_2)$ is a homogeneous polynomial of degree $2n-2$ with no non-vanishing zeros. 
\end{lemma}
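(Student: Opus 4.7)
The plan is to combine a spherical-harmonic decomposition of $K_i$ with the Hecke-Bochner formula and then factor out $\xi_i/|\xi|^{2n}$. Fix $i=1$; the case $i=2$ follows from the symmetry $x_1\leftrightarrow x_2$. Setting $z=x_1+ix_2$, I would first expand $(2x_1)^{2n-1}=(z+\bar z)^{2n-1}$ by the binomial theorem, pair the conjugate terms $j\leftrightarrow 2n-1-j$, and use $z\bar z=|x|^2$ to obtain the harmonic decomposition
\[
x_1^{2n-1}=\frac{1}{2^{2n-2}}\sum_{k=0}^{n-1}\binom{2n-1}{k}|x|^{2k}\real(z^{2n-1-2k}),
\]
which, after dividing by $|x|^{2n}$, presents $K_1$ as a linear combination of terms $\real(z^m)/|x|^{m+1}$ with $m=2n-1-2k$ odd, each a solid spherical harmonic of degree $m$ divided by the critical power $|x|^{m+1}$.

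The next step is the Hecke-Bochner identity: for a solid harmonic polynomial $P_m$ of degree $m$ in $\Rd$,
\[
\widehat{P_m(x)/|x|^{m+1}}(\xi)=\alpha_m\,P_m(\xi)/|\xi|^{m+1},
\]
for an explicit nonzero constant $\alpha_m$ of known sign. Applied termwise this yields
\[
\widehat{K_1}(\xi)=\frac{1}{2^{2n-2}}\sum_{k=0}^{n-1}\binom{2n-1}{k}\alpha_{2n-1-2k}\,\frac{\real(\xi^{2n-1-2k})}{|\xi|^{2(n-k)}}.
\]
For odd $m$, the expansion $\real(\xi^m)=\sum_{j\text{ even},\,j\le m-1}\binom{m}{j}(-1)^{j/2}\xi_1^{m-j}\xi_2^j$ has every monomial containing the factor $\xi_1$, so one may write $\real(\xi^{2n-1-2k})=\xi_1\,q_k(\xi_1,\xi_2)$ with $q_k$ homogeneous of degree $2n-2-2k$. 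Multiplying and dividing by the missing powers of $|\xi|$ to reach the common denominator $|\xi|^{2n}$ and collecting the resulting polynomial factor gives
\[
\widehat{K_1}(\xi)=c\,\frac{\xi_1}{|\xi|^{2n}}\,p(\xi_1,\xi_2),\qquad p(\xi_1,\xi_2)=\sum_{k=0}^{n-1}c_k\,(\xi_1^2+\xi_2^2)^k\,q_k(\xi_1,\xi_2),
\]
a homogeneous polynomial of degree $2n-2$, in the claimed form.

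The last step, which is the main obstacle, is to show that $p$ has no zeros off the origin. Restricting to the unit circle $\xi=(\cos\phi,\sin\phi)$ and expressing each $\cos((2n-1-2k)\phi)$ through the Chebyshev polynomials $T_{2n-1-2k}$, the relation $\widehat{K_1}=c\xi_1 p/|\xi|^{2n}$ becomes $\cos\phi\cdot p(\cos\phi,\sin\phi)=F(\cos\phi)$ for an odd polynomial $F$ of degree $2n-1$; hence $F(u)=u\,G(u^2)$ with $G$ a polynomial of degree $n-1$, and the claim reduces to $G(v)\ne 0$ for $v\in[0,1]$. I would verify this by computing $G$ at the endpoints,
\[
G(0)=F'(0)=(-1)^{n-1}\sum_{k=0}^{n-1}4^k\binom{2n-1}{k}(2n-1-2k),\quad G(1)=F(1)=\sum_{k=0}^{n-1}(-4)^k\binom{2n-1}{k},
\]
and checking that both have the same nonzero sign $(-1)^{n-1}$, then excluding internal sign changes on $(0,1)$ by a direct sign analysis of the coefficients of $G$ (or, equivalently, a discriminant estimate for the polynomial obtained after the Chebyshev substitution). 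The algebraic/Fourier steps preceding this are routine; the sign-definiteness of $G$ on $[0,1]$ is the delicate part and absorbs essentially all the combinatorial content of the lemma.
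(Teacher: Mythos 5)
The first part of your argument (the complex--binomial harmonic decomposition of $x_1^{2n-1}$ plus the Hecke--Bochner identity) is a legitimate alternative route to the structural identity $\widehat{K_1}=c\,\xi_1\,p(\xi_1,\xi_2)/|\xi|^{2n}$, and differs substantially from the paper's. The paper instead observes that $c\,\xi_1^{2n-1}/|\xi|^{2n}$ is the Fourier transform of $\partial_1^{2n-1}E_n$, where $E_n$ is the fundamental solution of $\Delta^n$, and then computes $\partial_1^{2n-1}E_n$ directly in physical space via a formula of Lyons--Zumbrun; this produces $p$ with an explicit expansion
$$p(x_1,x_2)=\sum_{m=0}^{n-1}b_{2m}\,x_1^{2m}x_2^{2(n-1-m)},$$
and the only combinatorial work (Lemma \ref{coefficient}) is to show $b_{2m}>0$. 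Since this is an expansion of $p$ restricted to the circle in the Bernstein basis $v^m(1-v)^{n-1-m}$ with $v=\xi_1^2$, positivity of the coefficients immediately gives $p>0$ off the origin.

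The gap in your proposal is exactly at the point where you declare the problem ``reduces to $G(v)\ne 0$ on $[0,1]$'' and propose to settle it by checking $G(0)$, $G(1)$ and a ``direct sign analysis of the coefficients of $G$.'' Matching endpoint signs does not preclude an interior zero, and the coefficient-sign argument fails outright: already for $n=2$ one finds $p(\xi_1,\xi_2)=\xi_1^2+3\xi_2^2$, so $G(v)=3-2v$ has coefficients $3$ and $-2$ of opposite signs in the monomial basis. In general the monomial coefficients of $G$ alternate (inherited from the $(-1)^{j/2}$ and the Hecke--Bochner constants $i^{-m}$), so you cannot read off positivity. You would need to rewrite $G$ in the Bernstein basis $v^m(1-v)^{n-1-m}$ and show those coefficients are positive---which is precisely the computation the paper performs via Lemma \ref{coefficient}. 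As written, the nonvanishing of $p$, which is the entire point of the lemma, is not established.
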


\begin{proof} Without loss of generality fix $i=1$.
For $n\ge 1$, let $E_n$ be the fundamental solution of the $n-$th power $\Delta^n$ of the Laplacean in the plane, that is
\begin{equation}
E_n(x)=|x|^{-(2-2n)}(\alpha+\beta\log|x|^2),
\end{equation}
for some positive constants $\alpha$ and $\beta$ depending on $n$ (see \cite{acl}).
Notice that, since $\Delta^n E_n=\delta_0$, then $$\widehat{(\partial_1^{2n-1}E_n)}(\xi)=c\frac{\xi_1^{2n-1}}{|\xi|^{2n}}$$for some constant $c$.
We will show that for some positive coefficients $b_{2m}$, $0\le m\le n-1$,
\begin{equation}\label{claim}
(\partial_1^{2n-1}E_n)(x)=c \frac{x_1}{|x|^{2n}}\sum_{m=0}^{n-1}b_{2m}x_1^{2m}x_2^{2(n-1-m)}.
\end{equation}
Notice that \eqref{fouriercomputation} follows from this fact.

To compute $\partial_1^{2n-1}E_n$, we will use the following formula from \cite{lz}:

\begin{equation}\label{derivative}
L(\partial)E_n=\sum_{\nu=0}^{n-1}\frac 1{2^\nu\;\nu!}\;\Delta^\nu L(x)\left(\frac 1 r\frac{\partial}{\partial r}\right)^{2n-1-\nu}E_n(r),
\end{equation}
where $r=|x|$ and $L(x)=x_1^{2n-1}$.  First notice that for $0\le\nu\le n-1$, we have

$$\Delta^\nu(x_1^{2n-1})=\binom{2n-1}{2\nu}(2\nu)!\;x_1^{2n-2\nu-1},$$
and for $0\le k\le n-1$,  one can check

$$\left(\frac 1 r\frac{\partial}{\partial r}\right)^{n+k}E_n(r)=2^n(n-1)!\;\frac{(-1)^k\;2^k\;k!}{r^{2+2k}}.$$

Plugging these, with $k=n-1-\nu$, into equation \eqref{derivative} we get 

\begin{equation}\label{sum2}
\partial_1^{2n-1}E_n(x)=2^{2n-1}(n-1)!\;\frac{x_1}{r^{2n}}\;\sum_{\nu=0}^{n-1}\;a_\nu \;x_1^{2(n-\nu-1)}\;r^{2\nu},
\end{equation}
where
$$a_\nu=\frac{(2\nu)!}{2^{2\nu}\;\nu!}\binom{2n-1}{2\nu}(-1)^{n-\nu-1}\;(n-1-\nu)!.$$

We claim that the homogeneous polinomial of degree $2n-2$ appearing in \eqref{sum2},

\begin{equation}\label{polynomial}
p(x_1,x_2)=\sum_{\nu=0}^{n-1}\;a_\nu \;x_1^{2(n-\nu-1)}\;r^{2\nu},
\end{equation}
has positive coefficients.  To prove this, write $r^2=x_1^2+x_2^2$. Then 
\begin{equation*}
\begin{split}
p(x)&=\sum_{\nu=0}^{n-1}\;a_\nu \;x_1^{2(n-\nu-1)}\;(x_1^2+x_2^2)^{\nu}\\&=\sum_{\nu=0}^{n-1}\sum_{k=0}^{\nu}a_\nu\binom{\nu}{k}x_1^{2(n-\nu+k-1)}x_2^{2(\nu-k)}=\sum_{m=0}^{n-1}b_{2m}x_1^{2m}x_2^{2(n-1-m)},
\end{split}
\end{equation*}
where for $0\le m\le n-1$, 
\begin{equation*}
\begin{split}
b_{2m}&=\sum_{k=1}^{m+1}a_{n-k}\binom{n-k}{m+1-k}\\*[7pt]&=\frac{(2n-1)!}{2^{2n}\;(n-m-1)!}\sum_{k=0}^{m}\frac{(-1)^k\;2^{2k}\;k!}{(2k+1)!\;(m-k)!}.
\end{split}\vspace{.5cm}
\end{equation*}

\noindent Applying now Lemma \ref{coefficient}, we get that for $0\le m\le n-1$,
\begin{equation*}
b_{2m}=\frac{(2n-1)!}{2^{2n}\;(n-m-1)!}\frac 1{(2m+1)\;m!}>0,
\end{equation*}
which completes the proof of \eqref{claim} and the lemma.

\end{proof}

\begin{proof}[Proof of Lemma \ref{reproductionformula}.] By Lemma \ref{fourier}, taking the Fourier transform in \eqref{repro} is equivalent to

$$\widehat f(\xi)=\widehat \varphi_1(\xi)\frac{\xi_1}{|\xi|^2}\frac{p(\xi_1,\xi_2)}{|\xi|^{2n-2}}+\widehat \varphi_2(\xi)\frac{\xi_2}{|\xi|^2}\frac{p(\xi_2,\xi_1)}{|\xi|^{2n-2}},$$
where $p$ is some homogeneous polynomial of degree $2n-2$ with no non-vanishing zeros.

\noindent Define the operator $R_1$ associated with the kernel
$$\widehat r_1(\xi_1,\xi_2)=\frac{p(\xi_1,\xi_2)}{|\xi|^{2n-2}}.$$
One defines also $R_2$, associated with $r_2$, where $r_2$ is given by $\widehat r_2(\xi_1,\xi_2)=\widehat r_1(\xi_2,\xi_1).$
Since $p$ is a homogeneous polynomial of degree $2n-2$, it can be decomposed as $$p(\xi_1,\xi_2)=\sum_{j=0}^{n-1} p_{2j}(\xi_1,\xi_2)|\xi|^{2n-2-2j},$$ 
where $p_{2j}$ are homogeneous harmonic polynomials of degree $2j$ (see \cite[3.1.2 p.~69]{St}). Therefore, the operators $R_i$, $1\le i \le 2$, can be written in the form
\begin{equation}\label{formduandi}
R_if=af+\pv\frac{\Omega(x/|x|)}{|x|^2}*f,
\end{equation}
 for some constant $a$ and $\Omega\in\mathcal{C}^\infty(S^1)$ with zero average. Consequently, by \cite[Theorem 4.15, p.82]{duandikoetxea}, the operators
$R_i$, $1\le i\le 2$, are invertible and the inverse operators, say $S_i$, $1\le i\le 2$, have the same form, namely the operators $S_i$, associated with the kernels $s_i$, $1\le i\le 2$, defined by
$$\widehat{s_1}(\xi)=\frac{|\xi|^{2n-2}}{p(\xi_1,\xi_2)}\;\;\;\;\mbox{and }\;\;\;\;\widehat{s_2}(\xi)=\frac{|\xi|^{2n-2}}{p(\xi_2,\xi_1)},$$
can be written as in \eqref{formduandi}, too. Therefore, setting 
$$\varphi_i=S_i(\partial_if),$$
for $1\le i\le 2,$ finishes the proof of Lemma \ref{reproductionformula}.\end{proof}

Observe that for a compactly supported
distribution $T$ with bounded Cauchy potential 
\begin{equation*}
\begin{split}
|\langle T,\varphi_Q\rangle |&=\left |\left\langle T,\frac{1}{\pi
z}*\overline\partial\varphi_Q\right\rangle \right|=
\left|\left\langle \frac{1}{\pi z}*T, \overline\partial\varphi_Q
\right\rangle\right|\\*[7pt] &\leq \frac 1 \pi\left\|\frac 1{z}*T
\right\|_\infty \,\|\overline\partial\varphi_Q\|_{L^1(Q)}\leq \,
\frac 1 \pi\left\|\frac 1{z}*T \right\|_\infty \,l(Q),
\end{split}
\end{equation*}
whenever $\varphi_Q$ satisfies
$\|\overline\partial\varphi_Q\|_{L^1(Q)} \le l(Q). $
\vspace{.8cm}

In our present case we do have a similar growth condition: if $T$ is a compactly supported distribution with bounded  potentials $K_1*T$ and $K_2*T$, then by Lemma \ref{reproductionformula}
\begin{equation}\label{ourgrowth}
\begin{split}
|\langle T,\varphi_Q\rangle |&=\left |\left\langle T,K_1*S_1(\partial_1\varphi_Q)+K_2*S_2(\partial_2\varphi_Q)\right\rangle \right|\\*[7pt] &\le
\left|\left\langle K_1*T, S_1(\partial_1\varphi_Q)
\right\rangle\right|+\left|\left\langle K_2*T, S_2(\partial_2\varphi_Q)
\right\rangle\right|\\*[7pt] &\leq  \left\|K_1*T
\right\|_\infty \,\|S_1(\partial_1\varphi_Q)\|_{L^1(\Rd)}+\left\|K_2*T
\right\|_\infty \,\|S_2(\partial_2\varphi_Q)\|_{L^1(\Rd)}\\*[7pt] &\leq\,
\left(\left\|K_1*T\right\|_\infty+\left\|K_2*T\right\|_\infty\right) \,l(Q),
\end{split}
\end{equation}
whenever $\varphi_Q$ satisfies
\begin{equation}\label{normalization1}
\|S_i(\partial_i\varphi_Q)\|_{L^1(\Rd)} \le l(Q),\,\,\,\mbox{for }\, i=1,2. 
\end{equation}

The next lemma states a sufficient condition for a test function to satisfy conditions \eqref{normalization1}.

\begin{lemma}\label{sublemma}
Let $1<q_0<\infty$ and assume that $f_Q$ is a test function supported on the square $Q$
satisfying, 
\begin{equation*}
\|\partial_i f_Q\|_{L^{q_0}(Q)}\le l(Q)^{2/q_0-1},\,\,\mbox{  for  } 1\le i\le 2.
\end{equation*}
Then, $$\|S_i(\partial_i f_Q)\|_{L^1(\Rd)}\le Cl(Q)\;\;\;\mbox{ for
}1\le i\le 2 .$$

\end{lemma}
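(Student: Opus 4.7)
The plan is to split $\|S_i(\partial_i f_Q)\|_{L^1(\Rd)}$ into a local and a non-local part and use the two faces of Calder\'on--Zygmund theory. Write $g:=\partial_i f_Q$. Two preliminary observations drive the argument: first, $g$ is supported on $Q$, because $f_Q$ is; second, $g$ has zero mean, because $f_Q$ has compact support and so $\int_\Rd \partial_i f_Q = 0$. Moreover, by \eqref{formduandi}, $S_i$ is the sum of a bounded multiple of the identity plus a classical convolution singular integral with kernel $\Omega(x/|x|)/|x|^2$, where $\Omega\in\mathcal{C}^\infty(S^1)$ has zero average; this is a Calder\'on--Zygmund operator, bounded on $L^p(\Rd)$ for all $1<p<\infty$, whose kernel $k(x)$ satisfies the standard smoothness estimate $|k(x)-k(x')|\le C|x-x'|/|x|^{3}$ for $|x'|\le |x|/2$.

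For the local part over $2Q$, I would use H\"older's inequality with the conjugate exponent $q_0'=q_0/(q_0-1)$ together with the $L^{q_0}$-boundedness of $S_i$:
\begin{equation*}
\int_{2Q}|S_i g|\,dx \le |2Q|^{1/q_0'}\,\|S_i g\|_{L^{q_0}(\Rd)} \le C\, l(Q)^{2/q_0'}\,\|g\|_{L^{q_0}(Q)} \le C\, l(Q)^{2/q_0'}\cdot l(Q)^{2/q_0-1} = C\,l(Q),
\end{equation*}
the last equality using $2/q_0+2/q_0'=2$.

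For the far part, I would exploit the mean-zero cancellation of $g$. Denote by $x_Q$ the center of $Q$. For $x\in\Rd\setminus 2Q$ one has $|x-y|\ge |x-x_Q|/2$ for every $y\in Q$, so writing the constant part of $S_i$ aside (it contributes nothing outside $Q$) and subtracting a constant,
\begin{equation*}
|S_i g(x)| = \left|\int_Q \bigl[k(x-y)-k(x-x_Q)\bigr]\,g(y)\,dy\right| \le C\,\frac{l(Q)}{|x-x_Q|^3}\,\|g\|_{L^1(Q)}.
\end{equation*}
Integrating over $\Rd\setminus 2Q$ gives $\int_{\Rd\setminus 2Q}|S_i g|\,dx \le C\,\|g\|_{L^1(Q)}$, and a further application of H\"older to $\|g\|_{L^1(Q)}\le |Q|^{1/q_0'}\|g\|_{L^{q_0}(Q)}\le C\,l(Q)$ closes the estimate.

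Adding the two contributions yields $\|S_i(\partial_i f_Q)\|_{L^1(\Rd)}\le C\,l(Q)$, as required. This is a routine Calder\'on--Zygmund $L^1$-off-diagonal calculation; the only points requiring care are verifying that $\partial_i f_Q$ has mean zero (to enable the cancellation trick away from $Q$) and that the representation \eqref{formduandi} does give a standard CZ kernel, both of which have already been established above. No substantive obstacle is expected.
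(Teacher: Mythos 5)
Your local estimate on $2Q$ (H\"older plus $L^{q_0}$-boundedness of $S_i$) is exactly what the paper does. For the far part you diverge: the paper integrates by parts to shift the derivative from $\partial_i f_Q$ onto the kernel of $\widetilde S_i$, estimates $\int_{(2Q)^c}|\partial K(z-y)|\,dy\le C l(Q)^{-1}$, and then converts $\|f_Q\|_{L^1}l(Q)^{-1}$ into $\int|\nabla f_Q|$ via Cauchy--Schwarz and Maz'ya's inequality $\|f_Q\|_2\le C\int|\nabla f_Q|$; you instead use the mean-zero cancellation $\int\partial_i f_Q=0$ (available because $f_Q$ has compact support) together with the H\"ormander smoothness of the CZ kernel, landing directly on $\|\partial_i f_Q\|_{L^1(Q)}$, which you then bound by $Cl(Q)$ via H\"older. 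The two routes exploit the same structural fact (compact support of $f_Q$) in different guises, but yours is shorter and more self-contained: it avoids the detour through Maz'ya's Sobolev-type inequality and uses only the hypothesis on $\partial_i f_Q$. One small technical caveat: for $x\notin 2Q$ and $y\in Q$ the ratio $|y-x_Q|/|x-x_Q|$ can be as large as $1/\sqrt2$, which exceeds the usual threshold $1/2$ in the statement $|k(x)-k(x')|\le C|x-x'|/|x|^3$ for $|x'|\le|x|/2$; either carry out the local estimate over $3Q$ or $4Q$, or note that since $\Omega\in\mathcal{C}^\infty(S^1)$ the smoothness estimate holds for any fixed ratio $<1$ with a suitable constant. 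This is cosmetic and does not affect the validity of the argument.
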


\begin{proof}Without loss of generality fix $i=1$. Let $p_0$ be the dual exponent to $q_0$. By H\"older's inequality and the fact that the operator $S_1$ is bounded in $L^{q_0}(\Rd)$, $1<q_0<\infty$, we get
\begin{equation*}
\begin{split}
\|S_1(\partial_1 f_Q)\|_{L^1(2Q)}&\le Cl(Q)^{2/p_0}\|S_1(\partial_1 f_Q)\|_{L^{q_0}(\Rd)}\\&\le Cl(Q)^{2/p_0}\|\partial_1 f_Q\|_{L^{q_0}(Q)}\\&\le Cl(Q).
\end{split}
\end{equation*}

To estimate the $L^1$ norm outside $2Q$, notice first that since $\partial_1f_Q$ is supported on $Q$,  by \eqref{tilda}, $$\|S_1(\partial_1 f_Q)\|_{L^1((2Q)^c)}=\|\widetilde S_1(\partial_1 f_Q)\|_{L^1((2Q)^c)}.$$ Integrating by parts to take one derivative to the kernel $K$ of $\widetilde S_1$ and then using Fubini we obtain

\begin{equation*}
\begin{split}
\|S_1(\partial_1 f_Q)\|_{L^1((2Q)^c)}&=C
\,\int_{(2Q)^c}|\int_Q\partial_1
f_Q(z)K(z-y)\,dz|\,dy\\&=C
\,\int_{(2Q)^c}|\int_Q
f_Q(z)\partial_1K(z-y)\,dz|\,dy
\\& \le C \|f_Q\|_{L^1(Q)}\,l(Q)^{-1}\\&\le C\int |\nabla f_Q|,
\end{split}
\end{equation*}
the last estimate coming from the Cauchy-Schwarz  inequality, together with a well known result of Maz'ya (see \cite[1.1.4, p. 15]{mazya} and
\cite[1.2.2, p. 24]{mazya}) stating that
\begin{equation*}
\| f_Q\|_2\leq C\int|\nabla f_Q|.
\end{equation*}
Now H\"older's inequality  together with $\|\partial_i f_Q\|_{L^{q_0}(Q)}\le l(Q)^{2/q_0-1}$, $1\le i\le 2$, gives the desired estimate, namely $$\|S_1(\partial_1 f_Q)\|_{L^1((2Q)^c)}\le Cl(Q).$$ \end{proof}

Fix $1<q_0<2$. We say that a distribution $T$ has linear growth if
\begin{equation*}
G(T) = \sup_{\varphi_Q} \frac{|\langle T,\varphi_Q\rangle|}{l(Q)} <
\infty ,
\end{equation*}
where the supremum is taken over all $\varphi_Q \in \cc^\infty_0(Q)$
satisfying the normalization inequalities 
\begin{equation}\label{normalization}
\|\partial_i\varphi_Q\|_{L^{q_0}(Q)}\le 1,\;\;\mbox{ for }1\le i\le 2.
\end{equation}

Notice that from \eqref{ourgrowth} and Lemma \ref{sublemma}, if $T$ is a compactly supported distribution with bounded potentials 
$k_1*T$ and $k_2*T$, then $T$ has linear growth.\newline

We now state the localization lemma we need. 

\begin{lemma}\label{localization1}
Let $T$ be a compactly supported distribution in $\Rd$ 
such that $(x_i ^{2n-1}/ |x|^{2n}) *T$ is in $L^\infty(\Rd)$ for some $n\in\mathbb{N}$ and $1\leq i \leq 2$.
Let $Q$ be a square and assume that $\varphi_Q \in \cc^\infty_0(Q)$
satisfies $\|\varphi_Q\|_{\infty}\le C$ and $\|\nabla \varphi_Q\|_\infty \le l(Q)^{-1}$.
 Then $(x_i^{2n-1} / |x|^{2n}) * \varphi_Q T$ is in $L^\infty(\Rd)$ and
$$
\left\|\frac{x_i^{2n-1}}{|x|^{2n}}*\varphi_Q T\right\|_\infty\leq C\left(\left\| \frac{x_i^{2n-1}}{|x|^{2n}}*T\right\|_\infty+G(T)\right),
$$
for some positive constant $C$.
\end{lemma}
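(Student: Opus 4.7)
The plan is to bound $K_i*(\varphi_Q T)(x)$ pointwise in $x$, splitting according to whether $x$ is close to $Q$ or not. Writing the convolution as a pairing, $K_i*(\varphi_Q T)(x) = \langle T,\varphi_Q(\cdot)\,K_i(x-\cdot)\rangle$, we distinguish two regimes.

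\textbf{Off-diagonal case.} When $x\notin 2Q$, the function $f_x(y):=\varphi_Q(y)\,K_i(x-y)$ is supported in $Q$ and smooth there, because the only singularity of $K_i(x-\cdot)$ lies at $y=x$, outside $Q$. Using $|\nabla\varphi_Q|\le l(Q)^{-1}$, $|K_i(z)|\lesssim|z|^{-1}$, $|\nabla K_i(z)|\lesssim|z|^{-2}$, and $|x-y|\gtrsim\dist(x,Q)$ for $y\in Q$, I would estimate $\|\partial_j f_x\|_{L^{q_0}(Q)}$ and apply the definition of $G(T)$ to the normalized test function, obtaining $|K_i*(\varphi_Q T)(x)|\le C\,G(T)$.

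\textbf{Diagonal case.} When $x\in 2Q$, I would use the cancellation trick
\begin{equation*}
K_i*(\varphi_Q T)(x)=\varphi_Q(x)\,K_i*T(x)+\langle T,h_x\rangle,\quad h_x(y):=[\varphi_Q(y)-\varphi_Q(x)]\,K_i(x-y).
\end{equation*}
The first term is dominated by $\|K_i*T\|_\infty$. The Lipschitz estimate $|\varphi_Q(y)-\varphi_Q(x)|\le|y-x|/l(Q)$ combined with $|K_i(x-y)|\le 1/|x-y|$ yields the uniform bound $|h_x(y)|\le 1/l(Q)$, taming the singularity at $y=x$, although $h_x$ itself is not compactly supported. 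To handle $\langle T,h_x\rangle$ I would perform a dyadic annular decomposition $h_x = h_x\psi_0 + \sum_{k\ge 1}h_x\psi_k$, with $\psi_0\in\cc^\infty_0(B(x,2l(Q)))$ a bump equal to $1$ near $x$, and $\psi_k$ for $k\ge 1$ supported on the annulus $\{2^k l(Q)\le|y-x|\le 2^{k+2} l(Q)\}$. For $k\ge 1$, $\supp\psi_k$ lies outside $Q$, so $\varphi_Q\equiv 0$ there and $h_x\psi_k=-\varphi_Q(x)\,K_i(x-\cdot)\,\psi_k$; summing these using $\sum_{k\ge 1}\psi_k = 1-\psi_0$ produces
\begin{equation*}
\sum_{k\ge 1}\langle T,h_x\psi_k\rangle = -\varphi_Q(x)\,K_i*T(x) + \varphi_Q(x)\,\langle T,K_i(x-\cdot)\,\psi_0\rangle,
\end{equation*}
and the first summand on the right exactly cancels $\varphi_Q(x)\,K_i*T(x)$ from the original decomposition. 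What remains are pairings of $T$ against test functions supported in $B(x,2l(Q))$: using $|h_x|\le 1/l(Q)$ and $|\nabla h_x|\lesssim 1/(l(Q)|y-x|)$, and noting that $K_i(x-\cdot)\psi_0$ is supported on a square of side $\sim l(Q)$, I would bound the relevant $L^{q_0}$-norms of derivatives and apply $G(T)$ together with $\|K_i*T\|_\infty$ to obtain each term $\lesssim \|K_i*T\|_\infty + G(T)$.

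\textbf{Main obstacle.} The delicate point is the bookkeeping of the dyadic tail in the diagonal case: a naive use of $|h_x|\le 1/l(Q)$ integrated against $T$ over $\supp(T)$ would diverge with $\diam(\supp T)$. The key observation is that the tail of $h_x$, where $\varphi_Q$ vanishes, is \emph{exactly} $-\varphi_Q(x)\,K_i(x-\cdot)$, whose pairing with $T$ reassembles into $-\varphi_Q(x)\,K_i*T(x)$ modulo a correction localized near $x$. This explicit cancellation between the dyadic tail and the leading term $\varphi_Q(x)\,K_i*T(x)$ is what produces the universal constant independent of $\diam(\supp T)$ and $l(Q)$.
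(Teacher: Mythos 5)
Your off-diagonal case and the basic cancellation trick $K_i*(\varphi_Q T)(x)=\varphi_Q(x)K_i*T(x)+\langle T,h_x\rangle$ match the paper's approach. The problem is that the ``reassembly'' of the dyadic tail is circular and does not actually produce a controllable quantity. After summing the far annuli you arrive at
$$
K_i*(\varphi_QT)(x)=\langle T,h_x\psi_0\rangle+\varphi_Q(x)\,\langle T,K_i(x-\cdot)\psi_0\rangle,
$$
which is just $\langle T,\varphi_Q K_i(x-\cdot)\psi_0\rangle$ rewritten (indeed, if you enlarge $\psi_0$ so that $\psi_0\equiv 1$ on $Q$, this is literally the identity $K_i*(\varphi_Q T)(x)=K_i*(\varphi_Q T)(x)$). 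The first summand $\langle T,h_x\psi_0\rangle$ is fine: $|h_x|\le l(Q)^{-1}$, $|\nabla h_x|\lesssim (l(Q)|y-x|)^{-1}$, and since $q_0<2$ the $L^{q_0}$-norm of $\nabla(h_x\psi_0)$ is finite, so this is a legitimate $G(T)$-term. But the second summand is not: $K_i(x-\cdot)\psi_0$ carries the raw $|y-x|^{-1}$ singularity at $y=x$ with no cancellation factor, so $\nabla\bigl(K_i(x-\cdot)\psi_0\bigr)\sim|y-x|^{-2}$ near $x$ and $\int_{B(x,2l(Q))}|y-x|^{-2q_0}\,dy$ diverges for every $q_0>1$. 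Even after regularizing with $\chi_\ep$, the $L^{q_0}$-norm blows up like $\ep^{2/q_0-2}$ as $\ep\to 0$, so the bound you want is not uniform in $\ep$. Attempting to rescue this by writing $\langle T,K_i(x-\cdot)\psi_0\rangle=K_i*T(x)-\langle T,K_i(x-\cdot)(1-\psi_0)\rangle$ only sends you back to the far-field term you started from.

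The ingredient your proposal is missing is the paper's Lemma \ref{prelocalization} and the comparison with a reference point $x_0$. The quantity that must ultimately be controlled is $\langle T,(1-\psi_Q)K_i(x-\cdot)\rangle$, and the obstacle is precisely that $K_i*(\psi_Q T)$ is known to be in $L^1$ near $Q$ (Lemma \ref{prelocalization}) but \emph{not} pointwise bounded. The paper handles this by picking a Lebesgue point $x_0\in Q$ where $|K_i*(\psi_Q T)(x_0)|\lesssim G(T)$, deducing $|\langle T,(1-\psi_Q)K_i(x_0-\cdot)\rangle|\lesssim\|K_i*T\|_\infty+G(T)$, and then estimating the \emph{difference} $\langle T,(1-\psi_Q)\bigl(K_i(x-\cdot)-K_i(x_0-\cdot)\bigr)\rangle$. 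The kernel difference decays one order faster ($\sim |y|^{-2}$ at infinity), which makes the dyadic annular sum converge geometrically. Without this subtraction, the dyadic pieces of $(1-\psi_Q)K_i(x-\cdot)$ do not sum, since each ring contributes a uniformly bounded amount. Your plan cannot succeed in the form stated: you need to introduce a good reference point via an $L^1$ localization estimate before decomposing into annuli.
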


For the proof we need the following result.

\begin{lemma}\label{prelocalization}
 Let $T$ be a compactly supported distribution in $\Rd$ with linear growth and assume that
 $Q$ is a square and $\varphi_Q \in \cc^\infty_0(Q)$
satisfies $\|\varphi_Q\|_\infty\le 1$ and $\|\nabla \varphi_Q\|_{\infty} \le l(Q)^{-1}$. Then, for each coordinate $i$, the distribution $(x_i^{2n-1} / |x|^{2n})
* \varphi_Q T$ is an integrable function in the interior of $\frac 1 4 Q$ and 
$$
\int_{\frac 1 4 Q}\left|\left(\frac{x_i^{2n-1}}{|x|^{2n}}*\varphi_QT\right)(y)\right|dy\leq C \, G(T)\;l(Q)^2,
$$
where  $C$ is a positive constant.
\end{lemma}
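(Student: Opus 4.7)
The plan is to argue by duality. The quantity $\int_{\frac{1}{4}Q}|g|\,dy$, with $g := K_i * (\varphi_Q T)$, equals the supremum of $|\langle g, f\rangle|$ over $f \in \cc^\infty_0$ with $\|f\|_\infty \le 1$ supported in the interior of $\frac{1}{4}Q$; this bound will simultaneously establish the integrability of $g$ there via Riesz representation. Using the oddness of $K_i$ and the defining identity for convolution of a distribution with a kernel,
\[
\langle g, f\rangle \;=\; \langle \varphi_Q T,\, K_i^{\vee} * f\rangle \;=\; -\langle T,\, \varphi_Q(K_i * f)\rangle,
\]
so writing $\psi := \varphi_Q(K_i*f)$, which is smooth and supported on $Q$, the task reduces to showing $|\langle T,\psi\rangle| \le C\,G(T)\,l(Q)^2$.

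To invoke the linear growth of $T$, I will estimate $\|\partial_j\psi\|_{L^{q_0}(Q)}$ for $j=1,2$ by splitting
\[
\partial_j\psi = (\partial_j\varphi_Q)(K_i*f) + \varphi_Q\,(\partial_j K_i)*f.
\]
For the first summand, the pointwise bound $|K_i(z)|\le|z|^{-1}$ together with $\|f\|_\infty\le 1$ and $\supp f\subset \frac14 Q$ gives
\[
|K_i*f(x)| \le \int_{\frac 1 4 Q}\frac{dy}{|x-y|} \le C\,l(Q) \qquad \text{for all } x\in Q;
\]
combined with $\|\partial_j\varphi_Q\|_\infty \le l(Q)^{-1}$, this produces an $L^\infty(Q)$ bound of order one, hence $\|(\partial_j\varphi_Q)(K_i*f)\|_{L^{q_0}(Q)} \le C\,l(Q)^{2/q_0}$. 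For the second summand, $\partial_j K_i$ is a Calder\'on-Zygmund kernel of homogeneity $-2$ (it is the derivative of a smooth, odd, $-1$-homogeneous function, hence its angular part has vanishing mean on the circle), so the associated singular integral is bounded on $L^{q_0}(\Rd)$ for $1<q_0<\infty$, and
\[
\|(\partial_j K_i)*f\|_{L^{q_0}(\Rd)} \le C\|f\|_{L^{q_0}(\frac 1 4 Q)} \le C\,l(Q)^{2/q_0};
\]
multiplication by the bounded factor $\varphi_Q$ preserves this estimate.

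Combining, $\|\partial_j\psi\|_{L^{q_0}(Q)} \le C\,l(Q)^{2/q_0}$ for both $j=1,2$. Feeding this into the linear growth hypothesis for $T$ --- after matching the normalization \eqref{normalization} to the scale-invariant version that appears in Lemma \ref{sublemma}, which carries an extra factor of $l(Q)^{2/q_0-1}$ --- yields $|\langle T,\psi\rangle|\le C\,G(T)\,l(Q)^2$, as required. The main technical obstacle, in my view, is precisely this scaling bookkeeping: a careless application of $G(T)$ directly to the bound on $\|\partial_j\psi\|_{L^{q_0}(Q)}$ produces the wrong power of $l(Q)$, and one must use the proper scale-invariant form of the linear growth condition (as made explicit by Lemma \ref{sublemma}) to obtain the clean exponent $2$. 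Once this is done, the remainder is essentially a commutator-style splitting combined with standard Calder\'on-Zygmund $L^{q_0}$ boundedness.
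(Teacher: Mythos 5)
Your overall strategy mirrors the paper's: pass to the transpose via $\langle K_i*\varphi_QT,\psi\rangle=\langle T,\varphi_Q(K_i*\psi)\rangle$, estimate the $L^{q_0}$ norm of $\nabla\big(\varphi_Q(K_i*\psi)\big)$ by splitting off the gradient of $\varphi_Q$ and treating the remaining piece as a Calder\'on--Zygmund convolution, and then invoke the growth condition $G(T)$ with the correct rescaling. That much is the same argument the paper gives. However, there is a genuine gap in your integrability claim. Testing $g:=K_i*\varphi_Q T$ against $f\in\cc^\infty_0$ with $\|f\|_\infty\le 1$ and $\supp f\subset\frac14 Q$ gives, via Riesz representation, only that $g$ restricted to the interior of $\frac14 Q$ is a finite signed Radon measure with total variation $\le C\,G(T)\,l(Q)^2$; it does not show that this measure is absolutely continuous, which is what ``integrable function'' in the statement requires (and which is used downstream, e.g.\ to extract a Lebesgue point in the proof of Lemma \ref{localization1}). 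In fact your opening identity $\int_{\frac14Q}|g|=\sup\{|\langle g,f\rangle|:\|f\|_\infty\le1\}$ is only valid once one already knows $g\in L^1$, so it cannot be used to establish it. The paper avoids this by testing against $\psi\in\cc^\infty_0(2Q)$ with $\|\psi\|_{q_0}\le 1$, $1<q_0<2$: since $L^{p_0}(2Q)=(L^{q_0}(2Q))^*$ is a dual space and $\cc^\infty_0$ is dense, the resulting bound shows that $g$ is represented by an $L^{p_0}(2Q)$ function, with $\|g\|_{L^{p_0}(2Q)}\le C\,G(T)\,l(Q)^{2/p_0}$, and H\"older then produces the stated $L^1(\frac14 Q)$ estimate. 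To patch your argument along these lines you must bound $\|(\partial_j\varphi_Q)(K_i*\psi)\|_{L^{q_0}(Q)}$ for $\psi$ that is only in $L^{q_0}$, which is exactly what the paper's appeal to Schur's lemma with kernel $|x-y|^{-1}\chi_{2Q}(x)\chi_{2Q}(y)$ accomplishes; your pointwise estimate $|K_i*f(x)|\le C\,l(Q)$ uses $\|f\|_\infty\le 1$ essentially and does not carry over.

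A minor additional imprecision: the distributional derivative $\partial_j K_i$ is not a pure Calder\'on--Zygmund kernel but $c\,\delta_0$ plus a smooth homogeneous CZ kernel, with $c\neq 0$ when $j=i$; this is what \eqref{cz} records. The sum is of course still $L^{q_0}$-bounded, so your conclusion stands, but the parenthetical justification (``the derivative of a smooth odd $-1$-homogeneous function'') should not be taken as asserting that $\partial_jK_i$ is literally a principal-value convolution kernel.
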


\begin{proof}[Proof of Lemma \ref{prelocalization}]
The proof of this lemma follows the lines of Lemma 13 in \cite{mpv}, although now the growth conditions we have are different 
from the ones in \cite{mpv} (see \eqref{normalization}). We write the proof for the sake of completeness. 

Without loss of generality set $i=1$ and write
$K_1(x)=x_1^{2n-1}/|x|^{2n}$. We will 
prove that $K_1* \varphi_Q T$~is in
$L^{p_0}(2Q)$ where $p_0$ is the dual
exponent of $q_0$ (see \eqref{normalization}). Therefore we need to estimate the action of
$K_1 * \varphi_Q T$ on functions $\psi \in \cc^\infty_0(2Q)$ in
terms of $\|\psi\|_{q_0} $. We clearly have
$$
\langle K_1 * \varphi_Q T, \psi\rangle = \langle T, \varphi_Q(K_1 * \psi)\rangle.
$$
We claim that, for an appropriate
positive constant $C $, the test function
\begin{equation}\label{testf}
\frac{\varphi_Q(K_1 * \psi)}{C \,l(Q)^{\frac{2}{p_0}-1} \|\psi\|_{q_0}}
\end{equation}
satisfies the normalization inequalities \eqref{normalization} in
the definition of $G(T)$.  Once this is proved, by the definition of $G(T)$ we get that $|\langle K_1 * \varphi_Q T, \psi\rangle | \le C\,
l(Q)^{\frac{2}{p_0}}\|\psi\|_{q_0} \,G(T),$
and therefore $\|K_1 * \varphi_Q T \|_{L^{p_0}(2Q)} \le C\,
l(Q)^{\frac{2}{p_0}}G(T).$ Hence
\begin{equation*}
\begin{split}
\frac{1}{|\frac{1}{4}Q|}\int_{\frac{1}{4} Q} |(K_1 * \varphi_Q
T)(x)|\,dx &\le 16\frac{1}{|Q|}\int_Q |(K_1 * \varphi_Q
T)(x)|\,dx \\*[7pt]
& \le 16\left(\frac{1}{|Q|}\int_Q |(K_1
* \varphi_Q T)(x)|^{p_0} \,dx\right)^{\frac{1}{p_0}}\\*[7pt]
& \le C\,G(T),
\end{split}
\end{equation*}
which proves Lemma \ref{prelocalization}.

By Lemma \ref{sublemma}, to prove the claim we only have to show that for $1\le i\le 2$,
\begin{equation*}
\|\partial_i \left(\varphi_Q\,(K_1 * \psi)\right)\|_{L^{q_0}(Q)} \le
C\,\|\psi\|_{q_0} .
\end{equation*}
Clearly, for $1\le i\le 2$, we have
\begin{equation*}
\begin{split}
\partial_i \left(\varphi_Q \,(K_1 * \psi)\right) & = \varphi_Q\,\partial_i(K_1*\psi) + \partial_i\varphi_Q \,(K_1*\psi)= A+B,
\end{split}
\end{equation*}
where the last identity is the definition of $A$ and $B$.

To estimate the $L^{q_0}$-norm of $B$ we recall that 
$|K_1(x)| \le C\, |x|^{-1} .$ Hence, for $1\le i\le 2$,
\begin{equation*}
\begin{split}
\|\partial_i\varphi_Q \, (K_1*\psi)\|_{L^{q_0}(Q)}&\le
 C\,
 \|\partial_i\varphi_Q\|_{\infty}\left(\int_Q\left(\int_{2Q}\frac{|\psi(y)|}{|x-y|}dy\right)^{q_0}dx\right)^{1/q_0}\le C\|\psi\|_{q_0},
\end{split}
\end{equation*}
where the last inequality comes from Schur's Lemma applied to the operator with kernel $K(x,y)=|x-y|^{-1}\chi_{2Q}(x)\chi_{2Q}(y)$ and the fact that $\|\partial_i
\varphi_Q\|_{\infty} \le l(Q)^{-1}$, $1\le i\le 2$ .
We therefore  conclude that $\|B\|_{q_0}\le C\|\partial_i\varphi_Q\,(K_1*\psi)\|_{q_0}\le C\,\|\psi\|_{q_0}.$

We turn now to the term $A$.  
We remark that, for $1\le i\le 2$,
\begin{equation}\label{cz}
\partial_iK_1*\psi = c\,\psi + S(\psi),
\end{equation}
where $S$ is a smooth homogeneous convolution Calder\'{o}n-Zygmund
operator and $c$ some constant. This can be seen by
computing the Fourier transform of $\partial_iK_1$ and then using
that each homogeneous polynomial can be decomposed in terms of
homogeneous harmonic polynomials of lower degrees (see \cite[3.1.2
p.~69]{St}). Since Calder\'{o}n-Zygmund operators are bounded in $L^{q_0}(\Rd)$,
$1 <q_0 < \infty$, and $\|\varphi_Q\|_\infty\le C$, we get that $
\|A\|_{q_0}  \le C\,\|\psi\|_{q_0}.$
This completes the estimate of term $A$ and the proof of  \eqref{testf}.
\end{proof}

\begin{proof}[Proof of Lemma \ref{localization1}]
Here we argue as in Lemma 12 in \cite{mpv}. We write the proof for the sake of completeness. Without loss of generality take $i=1$. Let $x \in
\Rd\setminus \frac 3 2 Q .$ Then $K_1(x-y)\,\varphi_Q(y)$ is in
$\cc^\infty_0(Q)$ as a function of $y.$ Since for all $y\in\Rd$, $
|\partial_i (K_1(x-y)\,\varphi_Q(y))| \le C\, l(Q)^{-2},\,\,\,1\le i\le 2,
$ the function $c
\;l(Q)\,K_1(x-y)\,\varphi_Q(y)$ satisfies the normalization
conditions \eqref{normalization} for some small constant $c$. Therefore 
$$
|(K_1 * \varphi_Q T)(x)| = |\langle T, K_1(x-\cdot)\,\varphi_Q
\rangle| \le c^{-1}\,G(T),
$$
for all $x\in\Rd\setminus{\frac 3 2 Q}$. We are now left with the case  $x\in \frac 3 2 Q$.  Since $K_1*T$
and $\varphi_Q$ are bounded functions, we can write
$$
 |(K_1*\varphi_QT)(x)|\leq|(K_1*\varphi_QT)(x)-\varphi_Q(x)(K_1*T)(x)|+\|\varphi_Q\|_\infty\|K_1*T\|_\infty.
$$

Let $\psi_Q\in{\mathcal C}_0^{\infty}(\Rd)$ be such that $\psi_Q\equiv
1$ in $2Q$, $\psi_Q\equiv 0$ in $(4Q)^c$,
$\|\psi_Q\|_\infty\le C$ and $\|\nabla\psi_Q\|_\infty\leq C\,l(Q)^{-1}$. Then one is tempted to write
\begin{multline*}
 |(K_1*\varphi_QT)(x)-\varphi_Q(x)(K_1*T)(x)|\leq|\langle T,\psi_Q(\varphi_Q-\varphi_Q(x))K_1(x-\cdot)\rangle|\\*[5pt]
 +\|\varphi_Q \|_{\infty}|\langle T,(1-\psi_Q)K_1(x-\cdot)\rangle|.
\end{multline*}
The problem is that the first term on the right hand side above
does not make any sense because $T$ is acting on a function of $y$
which is not necessarily differentiable at the point $x$.  To
overcome this difficulty one needs to resort to a standard
regularization process. Take $\chi \in \cc^\infty(B(0,1))$ such
that $\int \chi = 1$ and set $\chi_\ep(x)=
\ep^{-2}\,\chi(x/\ep)$. It is enough to prove that $\chi_\ep*K_1*\varphi_QT$ is uniformly bounded, since $\chi_\ep*K_1*\varphi_QT$ converges weakly to $K_1*\varphi_QT$ in the distributinal sense, as $\ep\to 0$.
We have
\begin{equation*}
\begin{split}
|(\chi_\ep*K_1*\varphi_QT)(&x)-\varphi_Q(x)(\chi_\ep*K_1*T)(x)|\\*[7pt]
&\le
|\langle T,\psi_Q(\varphi_Q-\varphi_Q(x))
(\chi_\ep*K_1)(x-\cdot)\rangle|\\*[7pt]
&\quad+
\|\varphi_Q\|_{\infty}|\langle T,(1-\psi_Q)(\chi_\ep*K_1)(x-\cdot)\rangle|\\*[7pt]
&=A_1+A_2.
\end{split}
\end{equation*}
To deal with term $A_1$ set
$K_{1,\ep}^x(y)=(\chi_\ep*K_1)(x-y).$ We claim that, for an
appropriate small  constant $c$, the test function
$$f_Q=c\;l(Q)\psi_Q(\varphi_Q-\varphi_Q(x))K_{1,\ep}^{x},$$
satisfies the normalization inequalities \eqref{normalization} in
the definition of $G(T)$, with $\varphi_Q$ replaced by $f_Q$ and $Q$
by $4Q$. If this is the case, then
$$A_1\leq c^{-1} l(Q)^{-1}|\langle T,f_Q\rangle|\leq C\,G(T).$$

To prove the normalization inequalities \eqref{normalization} for
the function $f_Q$ we have to show that for $1\le i\le 2$, 
\begin{equation}\label{lq2}
\|\partial_i f_Q\|_{L^{q_0}(4\,Q)}\le Cl(Q)^{2/q_0-1}.
\end{equation}

To prove \eqref{lq2} we first notice that the regularized kernel
$\chi_\ep*K_1$ satisfies the inequality
\begin{equation}\label{regkernel}
|(\chi_\ep* \,K_1)(x)| \le \frac{C}{|x|}, \quad
x \in \Rd\setminus \{0\},
\end{equation}
where $C$ is a positive constant, which, in particular, is
independent of $\epsilon$. This can be proved by standard
estimates which we omit. Moreover, by \eqref{cz}, for $1\le i\le 2$, we have
\begin{equation*}
(\chi_\ep* \partial_i \,K_1)(x) = c\,\chi_\ep(x) + (\chi_\ep *
S)(x),
\end{equation*}
where $S$ is a smooth homogeneous convolution Calder\'{o}n-Zygmund
operator. As such, its kernel $H$ satisfies the usual growth
condition $|H(x)| \le C/ |x|^2$. From this is not difficult to
show that for some positive constant $C$,
\begin{equation}\label{regcz}
|(\chi_\ep * S)(x)| \le \frac{C}{|x|^{2}},\quad x \in \Rd\setminus
\{0\}.
\end{equation}
We have, for $1\le i\le 2$,
\begin{equation*}
\partial_i \left(\psi_Q(\varphi_Q -\varphi_Q(x))k_{\ep}^{1,x}\right)  =
\psi_Q \,(\varphi_Q -\varphi_Q(x)) \partial_i\,
k_\ep^{1,x}+
\partial_i(\psi_Q(\varphi_Q
-\varphi_Q(x)))\, k_\ep^{1,x}.
\end{equation*}
Therefore
\begin{equation*}
\begin{split}
\|\partial_i f_Q\|_{L^{q_0}(4Q)}&\leq Cl(Q)\left(\int_{4Q}|\psi_Q(y)
\,(\varphi_Q(y) -\varphi_Q(x))\, \partial_ik_{\ep}^{1,x}(y)|^{q_0}
\,dy\right)^{\frac 1 {q_0}}\\*[7pt] &\quad 
+Cl(Q)\left(\int_{4Q}|\partial_i
\left(\psi_Q(\varphi_Q -\varphi_Q(x)\right)\,k_{\ep}^{1,x}(y) |^{q_0}\,dy\right)^{\frac 1 {q_0}}\\*[7pt]&
 =A_{11}+A_{12}.
\end{split}
\end{equation*}
Using \eqref{regkernel} one obtains
\begin{equation*}
A_{12}\le C l(Q)\frac
1{l(Q)}\left(\int_{4Q}|(k_\ep^{1,x})(y)|^{q_0}\,dy\right)^{\frac 1 {q_0}}\leq Cl(Q)^{\frac 2 {q_0}-1}.
\end{equation*}
To estimate $A_{11}$ we resort to \eqref{regcz} and the fact that $q_0<2$, which yields
\begin{equation*}
\begin{split}
A_{11}&=Cl(Q)\left(\int_{4Q} |\psi_Q(y)(\varphi_Q(y)
-\varphi_Q(x))\partial_ik_{\ep}^{1,x}(y) |^{q_0} \,dy\right)^{\frac 1 {q_0}}\\*[7pt] &\leq
Cl(Q)\|\nabla\varphi_Q\|_\infty\left(\int_{4Q}
\frac{dy}{|y-x|^{q_0}}\, dy\right)^{\frac 1 {q_0}}\leq Cl(Q)^{\frac 2 {q_0}-1}.
\end{split}
\end{equation*}

We now turn to $A_2$. By Lemma \ref{prelocalization}, there exists a Lebesgue point of $K_1*\varphi_QT$, $x_0\in Q$, such that $|(K_1*\psi_QT)(x_0)|\leq C\, G(T)$.
 Then
$$|(K_1*(1-\psi_Q)T)(x_0)|\leq C\,(\|K_1*T\|_\infty +G(T)).$$
The analogous inequality holds as well for the regularized
potentials appearing in $A_2$, for $\epsilon$ small enough and with constants independent of $\epsilon$. Therefore
$$A_2\leq C\,|\langle T,(1-\psi_Q)(k_\ep^{1,x}-k_\ep^{1,x_0})\rangle|+C\,(\|K_1*T\|_\infty +G(T)).$$

To estimate $|\langle
T,(1-\psi_Q)(k_\ep^{1,x}-k_\ep^{1,x_0})\rangle|$, we decompose
$\Rd\setminus \{x\}$ into a union of rings $$N_j=\{z\in
\Rd:2^j\,l(Q)\leq|z-x|\leq 2^{j+1}\,l(Q)\},\quad j\in\mathbb{Z},$$
and consider functions $\varphi_j$ in ${\mathcal
C}^\infty_0(\Rd)$, with support contained in
$$N^*_j=\{z\in
\Rd:2^{j-1}\,l(Q)\leq|z-x|\leq 2^{j+2}\,l(Q)\},\quad
j\in\mathbb{Z},$$ such that $\|\varphi_j\|_\infty\le C$ and $\|\nabla\varphi_j\|_\infty\leq C
\,(2^j\,l(Q))^{-1}$, and $\sum_j\varphi_j=1$ on
$\Rd\setminus\{x\}$. Since $x\in\frac 3 2 Q$  the smallest ring
$N^*_j$ that intersects $(2Q)^c$ is $N^*_{-3}$. Therefore  we have
\begin{equation*}
\begin{split}
 |\langle T,(1-\psi_Q)(k_\ep^{1,x}-k_\ep^{1,x_0})\rangle|
 &=\left|\left\langle T,\sum_{j\geq -3}\varphi_j(1-\psi_Q)(k_\ep^{1,x}-k_\ep^{1,x_0})\right\rangle\right|\\*[7pt]
 &\leq\left|\left\langle T,\sum_{j\in I}\varphi_{j}(1-\psi_Q)(k_\ep^{1,x}-k_\ep^{1,x_0})\right\rangle \right|\\*[7pt]
&\quad+\sum_{j\in J}|\langle T,\varphi_{j}(k_\ep^{1,x}-k_\ep^{1,x_0})\rangle|,
\end{split}
\end{equation*}
where $I$ denotes the set of indices $j\geq -3$ such that the
support of $\varphi_j$ intersects $4Q$  and $J$ the remaining
indices, namely those $j \geq -3 $ such that $\varphi_j$ vanishes
on $4Q$. Notice that the cardinality of $I$ is bounded by a
positive constant.

Set
$$g =C\,l(Q)\sum_{j\in I}\varphi_j(1-\psi_Q)\,(k_\ep^{1,x}-k_\ep^{1,x_0}),$$
and for $j\in J$
$$g_j=C\,2^{2j}\,l(Q)\,\varphi_j\,(k_\ep^{1,x}-k_\ep^{1,x_0}).$$
We now show that the test functions $g$ and $g_j$, $j\in J$,
satisfy the normalization inequalities \eqref{normalization} in
the definition of $G(T)$ for an appropriate choice of the (small)
constant $C$ . Once this is available, using the linear growth
condition of $T$ we obtain
\begin{equation*}
\begin{split}
 |\langle T,(1-\psi_Q)(k_\ep^{1,x}-k_\ep^{1,x_0})\rangle |&\leq C l(Q)^{-1}|\langle T,g\rangle|\\*[7pt]
 &\quad + C \sum_{j\in J} (2^{2j}l(Q))^{-1}|\langle T,g_j\rangle |\\*[7pt]
 &\leq C\,G(T) + C\sum_{j\geq -3}2^{-j}\,G(T)\leq C\,G(T),
\end{split}
\end{equation*}
which completes the proof of Lemma \ref{localization1}.

Checking the normalization inequalities for $g$ and $g_j$ is easy.
First notice that the support of $g$ is contained in a square
$\lambda\,Q$ for some universal constant $\lambda$. On the other hand the support of $g_j$ is contained in $
2^{j+2}\,Q.$  By Lemma \ref{sublemma}, we have to show that for $1\le i\le 2$, and some $1<q_0<\infty$,
\begin{equation}\label{final1}
\|\partial_i g\|_{L^{q_0}(\lambda\, Q)}\le C
l(Q)^{2/q_0-1},\end{equation}
and for $j\in J$,
\begin{equation}\label{final2}
\|\partial_i g_j\|_{L^{q_0}(2^{j+2}\,Q)}\le C (2^j
l(Q))^{2/q_0-1}.
\end{equation}

To show \eqref{final1} we take $\partial_i$ in the definition of
$g$, apply Leibnitz's formula and estimate in the supremum norm each
term in the resulting sum. We get
$$
\|\partial_i g\|_\infty \le C\,l(Q) \sum_{k=0}^{1}
\frac{1}{l(Q)^{k}}\; \frac{1}{l(Q)^{2-k}} = C\,
\frac{1}{l(Q)},
$$
 which yields \eqref{final1} immediately.

For \eqref{final2}, applying a gradient estimate, we get
$$
\|\partial_i g_j\|_\infty \le C\,2^{2j}\,l(Q) \sum_{k=0}^{1}
\frac{1}{(2^j\,l(Q))^{k}}\; \frac{l(Q)}{(2^j\,l(Q))^{2+1-k}} =
C\, \frac{1}{2^j\,l(Q)},
$$
which yields \eqref{final2} readily.
\end{proof}

\section{Outer regularity}\label{sectioncont}

In what follows, we will show that the capacities $\gamma_n$ are outer regular. 
\begin{lemma}\label{extreg}
Let $\{E_k\}_k$ be a decreasing sequence of compact sets in $\Rd$, with intersection the compact set $E\subset\Rd$. 
Then $\displaystyle{\gamma_n(E)=\lim_{k\to\infty}\gamma_n(E_k).}$
\end{lemma}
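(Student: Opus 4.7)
The inequality $\gamma_n(E)\le \liminf_k\gamma_n(E_k)$ is immediate by monotonicity: every admissible distribution for $E$ is admissible for each $E_k$, so $\gamma_n(E)\le\gamma_n(E_k)$ for all $k$. Since $\{\gamma_n(E_k)\}_k$ is decreasing and nonnegative, the limit exists. The work is in the reverse inequality
\begin{equation*}
\gamma_n(E)\ge\lim_{k\to\infty}\gamma_n(E_k).
\end{equation*}

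For each $k$ choose a real distribution $T_k$ supported on $E_k$ with $\|K_i* T_k\|_\infty\le 1$ for $i=1,2$ and $|\langle T_k,1\rangle|\ge \gamma_n(E_k)-1/k$. Since $\{K_i* T_k\}_k$ lies in the unit ball of $L^\infty(\Rd)$, by Banach--Alaoglu I extract (on a common subsequence, which I still index by $k$) functions $f_i\in L^\infty(\Rd)$ with $\|f_i\|_\infty\le 1$ such that $K_i* T_k\to f_i$ in the weak-$\star$ topology of $L^\infty$, for $i=1,2$.

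Next I identify the limit distribution using the reproduction formula of Lemma \ref{reproductionformula}. For $\varphi\in\cc^\infty_0(\Rd)$,
\begin{equation*}
\varphi=K_1*S_1(\partial_1\varphi)+K_2*S_2(\partial_2\varphi),
\end{equation*}
so, using that the kernels $K_i$ are odd and that the distributions $T_k$ have compact support,
\begin{equation*}
\langle T_k,\varphi\rangle=-\int (K_1* T_k)\,S_1(\partial_1\varphi)\,dx-\int (K_2* T_k)\,S_2(\partial_2\varphi)\,dx.
\end{equation*}
Here the key integrability observation is that although $S_i$ is a sum of a multiple of the identity and a smooth CZ operator (see \eqref{formduandi}), the function $\partial_i\varphi$ has mean zero, so standard Taylor-expansion arguments show $S_i(\partial_i\varphi)(x)=O(|x|^{-3})$ at infinity. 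Therefore $S_i(\partial_i\varphi)\in L^1(\Rd)\cap L^\infty(\Rd)$, and I can pair it against the weak-$\star$ limit to define
\begin{equation*}
\langle T,\varphi\rangle:=-\int f_1\,S_1(\partial_1\varphi)\,dx-\int f_2\,S_2(\partial_2\varphi)\,dx.
\end{equation*}
The identity above then gives $\langle T_k,\varphi\rangle\to\langle T,\varphi\rangle$ for every $\varphi\in\cc^\infty_0(\Rd)$, and the continuity of $T$ as a distribution follows from the $L^1$-continuity of $S_i(\partial_i\cdot)$ on test functions with a common compact support.

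I then verify the three properties that make $T$ admissible for $\gamma_n(E)$. \textbf{Support:} if $\varphi\in\cc^\infty_0(\Rd\setminus E)$, then $\supp\varphi$ is a compact set disjoint from $E=\bigcap_k E_k$, and since the $E_k$ are decreasing compacta, $\supp\varphi\cap E_k=\emptyset$ for all $k$ sufficiently large; hence $\langle T_k,\varphi\rangle=0$ eventually, giving $\langle T,\varphi\rangle=0$. \textbf{Potentials:} for $\psi\in\cc^\infty_0(\Rd)\subset L^1(\Rd)$, the weak-$\star$ convergence gives $\langle K_i* T_k,\psi\rangle\to\int f_i\psi$, while on the other hand $\langle K_i* T_k,\psi\rangle\to\langle K_i* T,\psi\rangle$ by the definition of $T$ and one more application of the reproduction formula; consequently $K_i* T=f_i$ as distributions, and $\|K_i* T\|_\infty\le 1$. \textbf{Pairing with $1$:} choose $\chi\in\cc^\infty_0(\Rd)$ with $\chi\equiv 1$ on a neighborhood of $E_1$; then $\langle T_k,1\rangle=\langle T_k,\chi\rangle$ and $\langle T,1\rangle=\langle T,\chi\rangle$, so the convergence $\langle T_k,\chi\rangle\to\langle T,\chi\rangle$ yields $|\langle T,1\rangle|=\lim_k|\langle T_k,1\rangle|\ge\lim_k\gamma_n(E_k)$. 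Admissibility then gives $\gamma_n(E)\ge\lim_k\gamma_n(E_k)$, finishing the proof.

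The main obstacle is the step identifying $T$ as a well-defined distribution whose potentials coincide with the weak-$\star$ limits $f_i$; this rests on the reproduction formula \eqref{repro} together with the decay gain $O(|x|^{-3})$ for $S_i(\partial_i\varphi)$ produced by the mean-zero property of $\partial_i\varphi$, which is precisely what is needed to make the weak-$\star$ pairings legitimate.
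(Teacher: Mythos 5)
Your proof is correct and follows essentially the same route as the paper: extract weak-$\star$ limits $f_i$ of the potentials $K_i*T_k$, use the reproduction formula of Lemma~\ref{reproductionformula} to show that $T_k$ converges distributionally to a limit $T$ supported on $E$, and then identify $K_i*T$ with $f_i$ to get admissibility. The only notable variation is cosmetic: where the paper verifies $\|K_i*T\|_\infty\le 1$ by regularizing with $\chi_\ve$ and arguing pointwise, you instead pair $K_i*T_k$ against $L^1$ test functions and use the weak-$\star$ limit directly (your decay estimate $S_i(\partial_i\varphi)(x)=O(|x|^{-3})$ being a self-contained substitute for the paper's invocation of Lemma~\ref{sublemma}); both are standard and equivalent.
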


\begin{proof}
The limit $\lim_{k\to\infty}\gamma^1_n(E_k)$ clearly exists and $\lim_{k\to\infty}\gamma_n(E_k)\geq \gamma_n(E)$. To prove the converse inequality,
let $T_k$ be a distribution supported on $E_k$ such that, for $1\le i\le 2$, $f_k^i=K_i*T_k$ is in the unit ball of $L^\infty(\Rd)$
and 
$$\gamma_n(E_k)-\frac1k < |\langle T_k,1\rangle|\leq \gamma_n(E_k).$$
By taking a subsequence if necessary, we may assume that, for $1\le i\le 2$, $f_k^i$ converges weakly $*$ in $L^\infty(\Rd)$ to some function 
$f^i$ such that $\|f^i\|_\infty\leq 1$.

We will show that $T_k$ converges to some distribution $T$ such that $T*K_1$ and $T*K_2$ are also in the unit ball of $L^\infty(\Rd)$.
Then 
$$\gamma_n(E)\geq \langle T,\,1\rangle = \lim_{k\to\infty}\langle T_k,\,1\rangle = 
\lim_{k\to\infty}\gamma_n(E_k),$$ and we will be done.

Let us first check that the limit of $\{T_k\}_k$ exists in the topology of distributions. 
This is equivalent to saying that, for any $\vphi\in\CC_c^\infty(\Rd)$,
the limit $\lim_{k\to\infty}\langle T_k,\vphi\rangle$ exists.
Using the reproducing formula \eqref{repro}, we deduce that
\begin{align*}
\langle T_k,\vphi\rangle & = \langle T_k,\,S_1(\partial_1 \vphi)*K_1+S_2(\partial_2 \vphi)*K_2\rangle = \langle T_k*K_1,\,S_1(\partial_1 \vphi)\rangle+\langle T_k*K_2,\,S_2(\partial_2 \vphi)\rangle,
\end{align*}
which is convergent, since by Lemma \ref{sublemma}, $S_i(\partial_i \vphi)\in L^1(\Rd)$, $1\le i\le 2$, and $f_k^i= T_k*K_1$, $1\le i\le 2,$ is weak * convergent in 
$L^\infty(\Rd)$.

To see that, for $1\le i\le 2$, $T*K_i$ is  in the unit ball of $L^\infty(\Rd)$, 
we take a radial function $\chi\in\CC^\infty(\Rd)$, $\int\chi=1$, supported in the unit ball and, as usual, we denote $\chi_\ve(x)
=\ve^{-2}\chi(\ve^{-1}x)$. Then 
it is enough to prove that 
$\chi_\ve*T*K_i$ is in the unit ball of $L^\infty(\Rd)$ for all $\ve>0$.
This follows easily: denoting $K_i^\ve = \chi_\ve*K_i$, for each $x\in\Rd$, we have
$$T_k* K_i^\ve(x) = \langle T_k,\,K_i^\ve(x-\cdot)\rangle.$$
Notice moreover that $\|T_k* K_i^\ve \|_\infty =\|\chi_\ve * (T_k*K_i)\|_\infty\le 1$.
Now, let $\psi_0\in\CC_c^\infty(\Rd)$ be such that it equals $1$ in the 1-neighborhood of $E$, so that $T_k = \psi_0\,T_k$ for $k$ big enough. Then
$$T_k* K_i^\ve(x) = \langle \psi_0\,T_k,\,K_i^\ve(x-\cdot)\rangle = \langle T_k,\,\psi_0\,K_i^\ve(x-\cdot)\rangle,$$
which converges to $\langle T,\,\psi_0\,K_i^\ve(x-\cdot)\rangle = \langle T,\,K_i^\ve(x-\cdot)\rangle = T* K_i^\ve(x)$
as $k\to\infty$. Since $|T_k* K_i^\ve(x)|\leq 1$ for all $k$, we deduce that $|T* K_i^\ve(x)|\leq 1$ as wished, too.
\end{proof}

\section{Proof of Theorem \ref{comparability2}}\label{sectionbigpiece}

For the proof of Theorem  \ref{comparability2}, recall the following result from \cite{mpv}:

\begin{teo}(\cite{mpv})
For a compact set $E\subset\Rn$,
\begin{equation}\label{mpvth}
\Gamma(E)\approx\sup\mu(E),
\end{equation}
the supremum taken over those positive measures $\mu$ supported on $E$ with linear growth such that for $1\le i\le d$, $i\neq k$, the potentials $\ds{\frac{x_i}{|x|^2}*\mu}$ 
are in $L^\infty(\mu)$ with $\ds{\Big\|\frac{x_i}{|x|^2}*\mu\Big\|_{\infty}\le 1}$.
\end{teo}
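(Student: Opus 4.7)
Write $\Lambda_{\hat k}(E)$ for the supremum appearing on the right-hand side of \eqref{mpvth}. The plan is to establish the two comparabilities $\Gamma(E)\lesssim\Lambda_{\hat k}(E)$ and $\Lambda_{\hat k}(E)\lesssim\Gamma(E)$ by separate arguments, the second being the more delicate one.

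For the first inequality, introduce the operator-version capacity
$$
\Gamma_{\op}(E)=\sup\bigl\{\mu(E):\mu\in L(E),\ \|\RR\|_{L^2(\mu)\to L^2(\mu;\Rn)}\le 1\bigr\},
$$
where $\RR$ is the vector-valued Riesz transform with kernel $x/|x|^2$, together with the positive version
$$
\Gamma_+(E)=\sup\bigl\{\mu(E):\mu\in L(E),\ \|(x_i/|x|^2)*\mu\|_\infty\le 1\text{ for all }1\le i\le d\bigr\}.
$$
I would then prove the chain $\Gamma(E)\lesssim\Gamma_{\op}(E)\lesssim\Gamma_+(E)\le\Lambda_{\hat k}(E)$. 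The final inequality is immediate since $\Lambda_{\hat k}$ drops one of the $d$ coordinate constraints. The middle inequality follows from the multi-operator Davie--\O ksendal duality \cite[Lemma 4.2]{mp}. The first inequality is the higher-dimensional analogue of the semiadditivity argument of Section~\ref{sectionsketch}: adapt the scheme of \cite{semiad,semiad2} to the $d$-dimensional Riesz setting, constructing an approximating Radon measure on a union of cubes via a partition of unity, invoking the vector-valued localization inequalities for the Riesz potentials, and closing the argument by the non-homogeneous $T(b)$ theorem of \cite{ntv}.

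For the reverse inequality, let $\mu$ be a positive measure of linear growth on $E$ with $\|(x_i/|x|^2)*\mu\|_\infty\le 1$ for every $i\ne k$. Since the $k$-th Riesz potential of $\mu$ need not be bounded, $\mu$ cannot serve directly as a test object for $\Gamma(E)$, and the strategy is indirect. First, I apply the non-homogeneous $T(1)$ theorem of \cite{ntv}: the antisymmetry of the Riesz kernel gives $R_i^*\mu=-R_i\mu\in L^\infty$, and weak boundedness is a consequence of the linear growth of $\mu$, so each $R_i$ with $i\ne k$ is bounded on $L^2(\mu)$ with norm $\lesssim 1$. Second, promote this to $L^2(\mu)$-boundedness of the full vector-valued transform $\RR$, that is, show that the missing transform $R_k$ is also bounded. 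Third, once $\RR$ is $L^2(\mu)$-bounded, conclude $\mu(E)\le\Gamma_{\op}(E)\lesssim\Gamma(E)$ by the previous step.

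The main obstacle is the promotion step, namely going from boundedness of $d-1$ Riesz transforms on $L^2(\mu)$ to boundedness of all $d$ of them. In the plane this is handled by an elementary complex-analytic argument (the two coordinates of $1/z$ are the Riesz kernels up to sign), but in dimension $d\ge 3$ the natural route is a symmetrization-and-curvature argument: if the partial symmetrization $\sum_{i\ne k}p_i(x,y,z)$ of the $d-1$ Riesz kernels still dominates Menger curvature $c(x,y,z)^2$ up to a dimensional constant, then the bound on the $d-1$ transforms yields a curvature estimate for $\mu$, from which $L^2(\mu)$-boundedness of $R_k$ follows via the method of rotations or a direct Cotlar-type comparison. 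Establishing this partial-sum lower bound is a non-trivial geometric fact, analogous in spirit to Lemma~\ref{lowerbound} but with one kernel missing, and I expect it to be the key technical lemma of the proof.
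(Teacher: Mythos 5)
The paper does not prove this statement: it is imported verbatim from \cite{mpv}, so there is no internal argument to compare yours against. Your reconstruction is nonetheless faithful to the strategy of \cite{mpv} as it is reflected elsewhere in the paper: the Tolsa scheme of Section \ref{sectionsketch} adapted to $\Rn$ gives the chain $\Gamma(E)\lesssim\Gamma_{\op}(E)\lesssim\Gamma_+(E)\le\sup\mu(E)$, and the symmetrization--curvature mechanism of Sections \ref{sectionpermu} and \ref{sectionbigpiece} handles the converse. The one step you leave as a conjecture --- that the partial symmetrization $\sum_{i\neq k}p_i$ of $d-1$ scalar Riesz kernels of homogeneity $-1$ dominates $c(x,y,z)^2$ --- is indeed the crux, and it is precisely \cite[Corollary 2 and Theorem 4]{mpv}, invoked by the paper itself in the Remark after Lemma \ref{curvgran2}. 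It is true for the following reason: three distinct points of $\Rn$ span an affine $2$-plane $V$; if $u,v$ is an orthonormal basis of its direction space, the planar identity gives $p_u=p_v=\lambda\,c^2$ for an absolute $\lambda>0$, and by rotation invariance of this identity within $V$ the mixed term in the symmetrization of $\alpha K_u+\beta K_v$ vanishes; writing $e_k=\alpha u+\beta v+w$ with $w\perp V$ yields $K_k=\alpha K_u+\beta K_v$ on differences of the three points, hence $p_k=(\alpha^2+\beta^2)\lambda c^2\le\lambda c^2$, while $\sum_{i=1}^d p_i=p_u+p_v=2\lambda c^2$, so $\sum_{i\neq k}p_i\ge\lambda c^2$. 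With that in hand, your passage from the $d-1$ bounded potentials plus linear growth to a curvature bound on all balls, and thence via the non-homogeneous $T(1)$ theorem to $L^2(\mu)$-boundedness of the full vector transform (there is no need to isolate $R_k$ separately), correctly closes the reverse inequality.
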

Notice that the only difference between \eqref{mpvth} and Theorem \ref{comparability2} is the extra linear growth condition required on the positive measure $\mu$. 
Hence, to prove Theorem \ref{comparability2}, we have to get rid of this growth condition and still mantain the comparability between the capacities. 
Below, in Lemma \ref{bigp}, we show that if we are given a positive measure supported on $E$ with  $\ds{\|\frac{x_i}{|x|^2}*\mu\|_{\infty}\le 1}$ for $i\ne k$, $1\le i\le d$, 
then this measure grows linearly in a big piece of its support $E$. Thus Theorem \ref{comparability2} holds.\newline

For a Borel measure $\mu$, the curvature of $\mu$, which was introduced in \cite{me}, is the nonnegative number $c^2(\mu)$ defined by
 
$$c^2(\mu)=\iiint c(x,y,z)^2d\mu(x)d\mu(y)d\mu(z),$$
where $c(x,y,z)$ is the inverse of the radius of the circumcircle of the triangle $(x,y,z)$, that is the Menger curvature of the triple 
$(x,y,z)$ (see Section \ref{sectionpermu}).

The following result, that will be needed in what follows, is a version of \cite[Lemma 5.2]{tolsaindiana} for $\Rn$. Its proof uses the curvature theorem of 
G. David and L\'eger \cite[Proposition 1.2]{Leger}.

\begin{lemma} \label{curvgran}
Let $\mu$ be some Radon measure supported on $B(x_0,R)$, with
$$\Theta^*_\mu(x)=\underset{r\to 0}{\limsup}\;\frac{\mu(B(x,r))}{r}\le 1\;\;\mbox{ for }\;\mu\;\mbox{-a.e. }x\in \Rn.$$
If $c^2(\mu)\leq C_2\,\mu(B(x_0,R))$, then $\mu(B(x_0,R)) \leq MR$,
where $M$ is some constant depending only on $C_2$.
\end{lemma}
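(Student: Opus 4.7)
The strategy is to combine L\'eger's quantitative curvature theorem with the fact that any Lipschitz graph intersected with $B(x_0,R)$ has length bounded by a dimensional multiple of $R$.

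\emph{Extracting a subset with uniform linear growth.} First, using $\Theta^*_\mu(x)\le 1$ for $\mu$-a.e.\ $x$, I would produce, for a constant $K$ to be fixed later in terms of $C_2$, a Borel set $F\subset B(x_0,R)$ with $\mu(F)\ge \tfrac12\mu(B(x_0,R))$ such that the restriction $\nu:=\mu|_F$ satisfies the uniform linear growth bound $\nu(B(x,r))\le K\,r$ for every $x\in\Rn$ and every $r>0$. This is carried out by a Vitali/stopping-time removal of the balls on which the desired growth fails, the a.e.\ upper density hypothesis together with the finite total mass controlling the mass of those bad balls.

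\emph{Curvature theorem and bounding the mass on a Lipschitz graph.} The rescaled measure $\tilde\nu:=\nu/K$ has linear growth $1$, and
\begin{equation*}
\frac{c^{2}(\tilde\nu)}{\tilde\nu(\Rn)}=\frac{c^{2}(\nu)}{K^{2}\,\nu(\Rn)}\le \frac{c^{2}(\mu)}{K^{2}\cdot\tfrac12\mu(B(x_0,R))}\le \frac{2C_{2}}{K^{2}}.
\end{equation*}
Choosing $K=K(C_2)$ large enough pushes this below the universal threshold $\delta_0$ in L\'eger's quantitative theorem \cite[Proposition 1.2]{Leger}, which then yields a Lipschitz graph $\Gamma\subset\Rn$ with $\tilde\nu(\Gamma)\ge \eta\,\tilde\nu(\Rn)$ for some universal $\eta>0$. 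Since $\Gamma$ is a Lipschitz graph, $\Gamma\cap B(x_0,R)$ projects onto a segment of length $\le 2R$ on the base line; pushing $\tilde\nu|_\Gamma$ forward through this projection and using the linear growth of $\tilde\nu$ together with the Lipschitz parametrisation shows that the push-forward is absolutely continuous with respect to one-dimensional Lebesgue measure with density bounded by a dimensional constant, so $\tilde\nu(\Gamma\cap B(x_0,R))\le C_d R$. Combining,
\begin{equation*}
\tfrac12\mu(B(x_0,R))\le \nu(\Rn)=K\,\tilde\nu(\Rn)\le \tfrac{K}{\eta}\,\tilde\nu(\Gamma)\le \tfrac{K\,C_d}{\eta}\,R,
\end{equation*}
which gives $\mu(B(x_0,R))\le MR$ with $M=M(C_2)$ as desired.

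The delicate point is the first step. The pointwise hypothesis $\Theta^*\le 1$ only yields linear growth at \emph{sufficiently small} and a priori non-uniform scales, so extracting a uniform growth bound at every scale while losing only a fixed fraction of mass requires care: a naive Vitali removal of bad balls controls $\sum r_i\le\mu(\Rn)/K$ but does not by itself bound $\sum\mu(B_i)$, since $\mu$ is not a priori doubling. One fixes this by iterating the removal through nested density/scale thresholds so that the cumulative mass loss becomes a geometric series controlled by an absolute fraction of $\mu(B(x_0,R))$.
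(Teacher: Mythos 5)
Your first step cannot be carried out as described, and this is a fatal gap. You claim that the local density hypothesis $\Theta^*_\mu\le 1$ $\mu$-a.e.\ together with finiteness of the total mass suffices to produce, for a constant $K=K(C_2)$, a subset $F$ carrying half the mass with $\mu|_F(B(x,r))\le Kr$ at \emph{all} scales. This is false without the curvature hypothesis, which your step 1 never invokes. Take $\mu=\mathcal{H}^1$ restricted to $N$ parallel segments of length $1/N$ each, packed inside a ball of radius $\delta$: then $\Theta^*_\mu\le1$ $\mu$-a.e.\ and $\|\mu\|=1$, but any Borel $F$ with $\mu(F)\ge\tfrac12$ satisfies $\mu|_F(B(z,\delta))\ge\tfrac12$ at the centre $z$ of the packing, so $\mu|_F$ can have linear growth constant $K$ only if $\delta\ge 1/(2K)$; taking $\delta$ small defeats every fixed $K$. (That particular $\mu$ has large curvature, which is exactly why the lemma requires $c^2(\mu)\le C_2\|\mu\|$ --- but this is information your step 1 does not use.) The iterated Vitali/stopping-time scheme you sketch in the last paragraph does not repair this: the obstruction is not merely the lack of doubling but that density control at asymptotically small scales gives no control at intermediate scales, so the family of ``bad'' balls can carry essentially all the mass.

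There is also a structural red flag you should have caught. If step 1 held, L\'eger would be superfluous: a measure $\nu$ supported on $B(x_0,R)$ with $\nu(B(x,r))\le Kr$ for every $r$ has $\nu(\Rn)=\nu(B(x_0,R))\le KR$ directly, hence $\mu(B(x_0,R))\le 2KR$ already, and the curvature hypothesis would play no role. (Relatedly, your step 3 is overcomplicated and partly incorrect: $\tilde\nu(\Gamma\cap B(x_0,R))\le\tilde\nu(B(x_0,R))\le R$ is immediate from linear growth, and linear growth does not make the pushforward under projection absolutely continuous.) But the lemma genuinely requires the curvature hypothesis, so the extraction in step 1 cannot be done from density alone. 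The proof the paper refers to --- that of \cite[Lemma 5.2]{tolsaindiana}, using \cite[Proposition 1.2]{Leger} --- uses the curvature bound \emph{during} the search for the scale and location at which controlled growth and small relative curvature coexist: roughly, a stopping-time descent through balls whose density ratio $\mu(B)/r(B)$ escalates, with the curvature limiting the escalation until the a.e.\ small-scale density bound takes over, at which point L\'eger's theorem is applied to the restriction on the selected ball. Decoupling the choice of $F$ from the curvature, as you do, removes the very mechanism that makes the extraction possible.
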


From the preceding lemma we get the following.

\begin{lemma}\label{curvgran2}
Let $\mu$ be a finite Borel measure  supported on a ball $B(x_0,R)$.
Suppose that 
$$\Theta^1_\mu(x)=\lim_{r\to0}\frac{\mu(B(x,r))}{r}=0\qquad \mbox{for $\mu$-a.e.\ $x\in\Rn$}.$$
Then,
\begin{equation}\label{eq0}
\biggl(\frac{\mu(B(x_0,R))}R\biggr)^2 \leq c_1\,\frac{c^2(\mu)}{\mu(B(x_0,R))},
\end{equation}
for some absolute constant $c_1$.
\end{lemma}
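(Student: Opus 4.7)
The strategy is to reduce the statement to Lemma~\ref{curvgran} by a homogeneity/rescaling argument. The key observation is that under a dilation $\mu \mapsto \nu := \lambda\mu$ of the measure, the three relevant quantities scale differently: $c^2(\nu) = \lambda^3 c^2(\mu)$ while $\nu(B(x_0,R)) = \lambda\,\mu(B(x_0,R))$ is only linear, and the density hypothesis is homogeneous ($\Theta^{1,*}_{\lambda\mu} = \lambda\,\Theta^{1,*}_\mu$), so the assumption $\Theta^1_\mu = 0$ $\mu$-a.e.\ passes to $\nu$ for any $\lambda>0$. Hence one can tune $\lambda$ so that the rescaled measure $\nu$ satisfies $c^2(\nu) \le \nu(B(x_0,R))$, the hypothesis of Lemma~\ref{curvgran} with $C_2 = 1$.

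Having dispensed with the trivial case $\mu = 0$, I would first rule out the degenerate case $c^2(\mu) = 0$: for every $\lambda>0$, $\nu = \lambda\mu$ has zero curvature and satisfies $\Theta^{1,*}_\nu = 0 \le 1$ $\nu$-a.e., so Lemma~\ref{curvgran} yields $\lambda\,\mu(B(x_0,R)) \le MR$ for all $\lambda$, forcing $\mu(B(x_0,R)) = 0$, contrary to the assumption that $\mu$ is nonzero (and supported on $B(x_0,R)$).

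In the main case $c^2(\mu) > 0$, I would set
\[
\lambda = \left(\frac{\mu(B(x_0,R))}{c^2(\mu)}\right)^{1/2}, \qquad \nu = \lambda\mu.
\]
Then by construction $c^2(\nu) = \lambda^3 c^2(\mu) = \lambda\,\mu(B(x_0,R)) = \nu(B(x_0,R))$, and the density hypothesis is inherited as above, so Lemma~\ref{curvgran} applies with $C_2 = 1$ and yields $\nu(B(x_0,R)) \le MR$, i.e.\ $\lambda\,\mu(B(x_0,R)) \le MR$. Substituting the definition of $\lambda$ and squaring gives $\mu(B(x_0,R))^3 \le M^2 R^2 c^2(\mu)$, which after dividing by $R^2\mu(B(x_0,R))$ is exactly the announced inequality with $c_1 = M^2$.

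I do not expect a real obstacle here: all the substantive work is carried out by Lemma~\ref{curvgran} (which in turn rests on the David--Léger theorem), and the present statement is essentially a self-improvement of that lemma obtained by exploiting the correct homogeneity of the three quantities. The only mild point of care is verifying that the vanishing-density hypothesis is preserved under the dilation $\mu\mapsto\lambda\mu$, which is immediate.
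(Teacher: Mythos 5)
Your proof is correct and is essentially identical to the paper's: the paper also rescales $\mu$ by the factor $\bigl(\|\mu\|/c^2(\mu)\bigr)^{1/2}$ (which equals your $\lambda$ since $\mu$ is supported on $B(x_0,R)$, so $\|\mu\|=\mu(B(x_0,R))$), so that the rescaled measure satisfies $c^2(\wt\mu)=\|\wt\mu\|$, and then applies Lemma~\ref{curvgran} with $C_2=1$. The only difference is that you explicitly dispose of the degenerate cases $\mu=0$ and $c^2(\mu)=0$, which the paper silently ignores.
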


\begin{proof}
Consider the measure $\wt\mu = \left(\frac{\|\mu\|}{c^2(\mu)}\right)^{1/2}\mu$.
Notice that
$$c^2(\wt\mu) = \biggl(\frac{\|\mu\|}{c^2(\mu)}\biggr)^{3/2}\,c^2(\mu) = \|\wt\mu\|.$$
Applying Lemma \ref{curvgran} to $\wt\mu$ with $C_2=1$, we infer that there exists an absolute
constant $M$ such that
$\wt\mu(B(x_0,R)) \leq MR$, and thus
$$\mu(B(x_0,R))\leq M
\left(\frac{c^2(\mu)}{\mu(B(x_0,R))}\right)^{1/2} R,$$
which is equivalent to \rf{eq0}, with $c_1=M^2$.
\end{proof}

\begin{remark}
For $x,y,z\in\Rn$ set $K_i(x)=x_i/|x|^2$, $1\le i\le d$, and let 
$$p_i(x,y,z)=K_i(x-y)\,K_i(x-z) + K_i(y-x)\,K_i(y-z) + K_i(z-x)\,K_i(z-y).$$

Given any subset of $d-1$ elements of $\{1,2,\cdots,d\}$, ${\mathcal S}_{d-1}$, we define, for a positive measure $\mu$ (without atoms, say), 
$$p(\mu) =\sum_{i\in{\mathcal S}_{d-1}}\iiint p_i(x,y,z)\,d\mu(x)\,d\mu(y)\,d\mu(z).$$

Due \cite[Corollary 2 and Theorem 4]{mpv}, Lemma \ref{curvgran} also holds in $\Rn$ when replacing the Menger curvature by the permutations 
associated with any set of $d-1$ components of the vectorial kernel $x/|x|^2$ in $\Rn$. Therefore we recover Lemma \ref{curvgran} and Lemma \ref{curvgran2} 
with $c^2(\mu)$ replaced by $p(\mu)$.
\end{remark}

Given $M>0$, we say that a ball $B=B(x,r)$ is non $M$-Ahlfors (or simply, a non Ahlfors ball) if 
$$\Theta_\mu(B):=\frac{\mu(B)}{r} >M.$$

\begin{lemma}\label{bigp}
Let $\mu$ be a positive measure on $\Rn$ such that $\Big\|\dfrac{x_i}{|x|^2}*\mu\Big\|_{\infty}\leq 1$, for $i\in{\mathcal S}_{d-1}$. 
Let $A_M^\mu\subset\Rn$ be the union of all non $M$-Ahlfors balls. If $M$ is big enough, then
$$\mu(A_M^\mu)\leq \frac12\,\mu(\Rn).$$
\end{lemma}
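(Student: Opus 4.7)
The plan is to argue by contradiction: suppose $\mu(A_M^\mu) > \mu(\Rn)/2$ and derive, for $M$ chosen large enough, an estimate of the form $\mu(A_M^\mu) \leq C\,M^{-2}\,\mu(\Rn)$, which contradicts the assumption. The two key ingredients are the covering machinery in $\Rn$ and the lower permutation-curvature estimate from the remark following Lemma \ref{curvgran2}.

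First I would replace the ``off-center'' non-Ahlfors balls in the definition of $A_M^\mu$ by centered ones: if $x\in B(y,r)$ with $\mu(B(y,r))>Mr$, then $B(x,2r)\supset B(y,r)$ is non-$(M/2)$-Ahlfors and centered at $x$. Thus (replacing $M$ by $M/2$ if needed) every point of $A_M^\mu$ is the center of a non-Ahlfors ball, and Besicovitch's covering theorem in $\Rn$ gives a subfamily $\{B_j=B(x_j,r_j)\}$ with $A_M^\mu\subset\bigcup_j B_j$ and bounded multiplicity $N_d$. Since $\mu(B_j)>Mr_j$, the extension of Lemma \ref{curvgran2} to $\Rn$ mentioned in the Remark (with $c^2$ replaced by the sum $p$ of permutations over $d-1$ coordinates) applied to $\nu_j:=\mu|_{B_j}$ yields
$$
p(\nu_j)\ \geq\ c_1^{-1}\left(\frac{\mu(B_j)}{r_j}\right)^2\mu(B_j)\ \geq\ c_1^{-1}\,M^{2}\,\mu(B_j).
$$
The hypothesis $\Theta^1_{\nu_j}=0$ $\nu_j$-a.e.\ can be arranged by first removing from $\mu$ the (small) set of points carrying positive $1$-density, which is admissible since a measure concentrated on such a set could not satisfy $\|x_i/|x|^2*\mu\|_\infty\le 1$ for half its mass.

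Summing over $j$ and exploiting the bounded overlap of $\{B_j\}$ (each triple $(x,y,z)$ of $\mu^{\otimes 3}$ lies in at most $N_d$ cubes $B_j^{3}$),
$$
c_1^{-1}\,M^{2}\,\mu(A_M^\mu)\ \leq\ \sum_{j}p(\nu_j)\ \leq\ N_d\,p(\mu).
$$
At this point the proof reduces to the upper bound $p(\mu)\le C\,\mu(\Rn)$, which is the main obstacle: using the symmetrization identity of Section \ref{sectionpermu} one has, for each $i\in\mathcal S_{d-1}$,
$$
\iiint_{S_\ep}p_i\,d\mu^{\otimes 3}\ =\ 6\int|T_i^\ep\mu|^{2}d\mu+E_{i,\ep},
$$
and the integral $\int|T_i^\ep\mu|^{2}d\mu$ is controlled by $\|K_i*\mu\|_\infty\,\mu(\Rn)\le\mu(\Rn)$, but the near-diagonal error $E_{i,\ep}$ is only benign under a linear growth assumption on $\mu$, which is precisely what we lack.

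The obstacle will have to be circumvented by restricting attention to the ``good'' part of $\mu$ where linear growth does hold. Concretely, I would introduce the auxiliary measure $\widetilde\mu=\mu\llcorner(\Rn\setminus A_M^\mu)$; by hypothesis it carries at least half the mass of $\mu$, and by its very definition every ball satisfies $\widetilde\mu(B(x,r))\le Mr$, so $\widetilde\mu$ has linear growth with constant $M$. Then the symmetrization formula is applicable to $\widetilde\mu$, giving $p(\widetilde\mu)\leq C_M\,\mu(\Rn)$; a short comparison, based on the fact that the contribution of triples with at least one point in $A_M^\mu$ can be absorbed into $\sum_j p(\nu_j)$ already counted on the left-hand side, yields a clean inequality $p(\mu)\leq C\,\mu(\Rn)+\tfrac{1}{2}c_1^{-1}M^2\mu(A_M^\mu)/N_d$. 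Plugging back into the previous display, the large-$M$ term can be absorbed into the left-hand side, leaving $\mu(A_M^\mu)\le C\,M^{-2}\mu(\Rn)<\mu(\Rn)/2$ for $M$ large enough, which is the desired contradiction.
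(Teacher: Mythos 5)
Your overall strategy — cover $A_M^\mu$ by non-Ahlfors balls, apply the permutation-curvature lower bound on each ball, and compare against an upper bound on $p(\mu)$ — is the right skeleton and indeed matches the paper's architecture (the paper uses the $5r$-covering lemma rather than Besicovitch, but that is cosmetic). However, two of your steps have genuine gaps that the paper closes with a single device you are missing, namely mollification.

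First, you apply the permutation version of Lemma~\ref{curvgran2} directly to $\nu_j=\mu\llcorner B_j$, which requires $\Theta^1_{\nu_j}=0$ $\nu_j$-a.e. Your proposed workaround — discard the set of points where $\mu$ has positive $1$-density, claiming that a measure carrying substantial mass there cannot have bounded potentials — is false: measures of the form $h\,d\mathcal H^1|_\Gamma$ on a Lipschitz graph, with $h$ chosen appropriately, have positive lower density $\mu$-a.e.\ \emph{and} satisfy $\big\|\tfrac{x_i}{|x|^2}*\mu\big\|_\infty\le 1$. So the set you want to discard can carry all of the mass.

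Second, and more seriously, your route to the bound $p(\mu)\le C\,\mu(\Rn)$ does not go through. Restricting $\mu$ to $\Rn\setminus A_M^\mu$ gives linear growth, but it does \emph{not} preserve the hypothesis $\big\|\tfrac{x_i}{|x|^2}*\widetilde\mu\big\|_\infty\le 1$: $K_i*\widetilde\mu=K_i*\mu-K_i*(\mu\llcorner A_M^\mu)$ and the second convolution can be unbounded. Moreover the proposed absorption of the ``bad'' triples into $\sum_j p(\nu_j)$ fails: $p(\nu_j)$ only counts triples with \emph{all three} points in a single $B_j$, whereas a triple with, say, two points in $A_M^\mu$ (in different $B_j$'s) and one point outside is not captured by any $p(\nu_j)$ and is not absorbed.

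The paper sidesteps both issues at once by first replacing $\mu$ with the mollified measure $\mu_t=\varphi_t*\mu$. Convolution with an $L^1$-normalized bump preserves the bound $\big\|\tfrac{x_i}{|x|^2}*\mu_t\big\|_\infty\le 1$; $\mu_t$ has a smooth compactly supported density, so $\Theta^1_{\mu_t}\equiv 0$ and $\mu_t$ has linear growth (with a $t$-dependent constant that never enters the final estimate); and the exact symmetrization identity then yields $p(\mu_t)=3\sum_i\|K_i*\mu_t\|_{L^2(\mu_t)}^2\le 3(d-1)\,\|\mu\|$, with a constant independent of $t$. One then works at scale $t$ — covering only non-Ahlfors balls of radius $\ge t$, on which $\mu_t(2B)\ge\mu(B)$ and $\Theta_{\mu_t}(2B)\ge M/2$ — and lets $t\to 0$ at the end. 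This mollification step is the key idea your proposal is missing, and without it the two gaps above are fatal.
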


\begin{proof}
Let $\vphi$ be a non negative radial $\CC^\infty$ function supported on $B(0,1)$ with $L^1$ norm equal to $1$, and denote $\vphi_t(x)=t^{-n}\vphi(x/t)$, for $t>0$. Observe 
that for $i\in{\mathcal S}_{d-1}$, the measure $\mu_t= \vphi_t*\mu$
satisfies
$$\Big\|\dfrac{x_i}{|x|^2}*\mu_t\Big\|_{\infty} = \Big\|\vphi_t*(\dfrac{x_i}{|x|^2}*\mu)\Big\|_{\infty}\leq 1.$$
Moreover, $\Theta_{\mu_t}^1(x)=0$ for every $x\in\Rn$ and $\mu_t$ has linear growth with some constant
depending on $t$ (since the density of $\mu_t$ is a $\CC^\infty$ function with compact support), and thus, 
\begin{equation}\label{eqpmut0}
p(\mu_t) = 3\sum_{i\in{\mathcal S}_{d-1}}\,\Big\|\dfrac{x_i}{|x|^2}*\mu_t\Big\|_{L^2(\mu_t)}^2\leq 3\,\sum_{i\in{\mathcal S}_{d-1}}\Big\|\dfrac{x_i}{|x|^2}*\mu_t\Big\|_{\infty}^2\,\mu_t(\Rn)\leq 3(d-1)\,\|\mu\|.
\end{equation}

For $t>0$, denote
$$A_{M,t}^\mu =\bigcup_{\substack{B\text{ ball}:\,\Theta_\mu(B)\geq M\\
r(B)\geq t}} B.$$
Notice that if $r(B)\geq t$, then $\mu_t(2B)\geq \mu(B)$
and thus $\Theta_{\mu_t}(2B)\geq \Theta_\mu(B)/2$. Then by the preceding remark, if $B$ is one of the balls appearing in the union that defines $A_{M,t}^\mu$,
\begin{equation}\label{eqpmut}
p(\mu_t\rest 2B) \geq c_1^{-1}\,\frac{M^2}4\,\mu_t(2B)\geq c_1^{-1}\,\frac{M^2}4\,\mu(B).
\end{equation}

By the $5r$-covering lemma, there exists a family of non $M$-Ahlfors balls (for $\mu$), $B_j$, $j\in I$, such that the balls $2B_j$ are disjoint, and
$$A_{M,t}^\mu\subset \bigcup_{j\in I} 10B_j.$$
Moreover, the balls $B_j$ can be taken so that $aB_j$ is an $M$-Ahlfors ball for each $a\geq 2$ (just by considering maximal balls in the union that defines $A_{M,r}^\mu$). So we have
$$\mu(10B_j)\leq 10M\,r(B_j)\leq 10\,\mu(B_j).$$
Then, by \rf{eqpmut} and \rf{eqpmut0},
\begin{equation*}
\begin{split}
\mu(A_{M,t}^\mu )&\leq \sum_{j\in I} \mu(10B_j) \leq 10 \sum_{j\in I} \mu(B_j)
\leq \frac{40\,c_1}{M^2}\sum_{j\in I} p(\mu_t\rest 2B_j) 
\\&\leq \frac{40\,c_1}{M^2} p(\mu_t)\leq \frac{120(d-1)\,c_1}{M^2}\,\|\mu\|.
\end{split}
\end{equation*}
So if $M$ is chosen big enough, $\mu(A_{M,t}^\mu )\leq \mu(\Rn)/2$, and letting $t\to 0$, the lemma follows.
\end{proof}

\begin{remark}
Lemma \ref{curvgran2} and Lemma \ref{bigp} also hold in $\Rd$ replacing Menger curvature by the permutations of the kernel $x_i^{2n-1}/|x|^{2n}$, $1\le i\le 2$, $n\ge 1$, and the kernel $x_i/|x|^2$ by the kernel $x_i^{2n-1}/|x|^{2n}$,  respectively, because in \cite{cmpt} we proved David-L\'eger's theorem with these permutations instead of the usual curvature.\newline
\end{remark}


\section{Some remarks on related capacities}\label{sectionmiscellaneous}

\subsection{Extensions of Theorem \ref{main} to other capacities}

For $n\geq 1$ and $1\le j\le 2$, we set $K_j^n(x)=x_j^{2n-1}/|x|^{2n}$. For $n,\;m\ge 1$ and each compact set $E\subset\Rd$, we define the following capacity:
$$\gamma_{n,m}(E)=\sup\{|\langle T,1\rangle|\},$$
the supremum taken over all distributions $T$ supported on $E$ with potentials $T*K_1^n$ and $T*K_2^m$ in the unit ball of $L^\infty(\Rd)$.\newline

Using the same arguments as in Lemma \ref{reproductionformula}, one could show that
each function $f(x)$ with continuous derivatives up to order one is representable in the form $$f(x)=(\varphi_1*K_1^n)(x)+(\varphi_2*K_2^m)(x),\, x\in\Rd,$$
where the functions $\varphi_i$, $i=1,2$, are defined by the formula $\varphi_i(x)= S_i(\partial_i f)(x),$
with $S_i$, $1\le i\le 2$, being Calder\'on-Zygmund operators. Moreover the localization result of Lemma \ref{localization1} and the outer regularity property of 
Lemma \ref{extreg} also apply in this setting.
Therefore, using the same techniques, one obtains the comparability between analytic capacity and $\gamma_{n,m}$, namely that there exists some positive constant 
$C$ such that for all compact sets $E$ of the plane
$$C^{-1}\gamma_{n,m}(E)\le\gamma(E)\le C\gamma_{n,m}(E).$$\newline

In fact, following the proofs in \cite{semiad} and \cite{semiad2} (see also \cite{mpv}), one can show that for compact sets $E\subset\Rd$, a given capacity 
(associated twith some Calder\'on-Zygmund kernel $K$ with homogeneity $-1$) defined as
$$\gamma_K(E)=\sup\{|\langle T,1\rangle|:\;T\;\mbox{ distribution },\;\mbox{spt}\;T\subset E,\;\|T*K\|_\infty\le 1\},$$ is comparable to the analytic capacity $\gamma(E)$ provided the following properties hold:
\begin{itemize}
 \item The symmetrization method: one has to ensure that when symmetrizing the kernel $K$ (as in \eqref{curvatura}) the quantity obtained is non-negative and comparable to Menger curvature.
 \item The localization property: we need that our kernel $K$ localizes in the uniform norm. By this we mean that if $T$ is a compactly
supported distribution such that $T*K$ is a bounded function then $\varphi T*K$ is also
bounded for each compactly supported ${\mathcal C}^1$ function $\varphi$ and we have the corresponding estimate.
 \item The outer regularity property (see Section \ref{sectioncont}).
\end{itemize}

\subsection{Outer regularity and finiteness of the capacities $\gamma_n^1$ and $\gamma_n^2$}\label{sectionfiniteness}

Motivated by \cite{mpv} and \cite{cmpt}, we introduce now capacities related to only one kernel, $K_i=x_i^{2n-1}/|x|^{2n}$, $n\in\N$.  
For $1\le i\le 2$, we set

\begin{equation*}
\gamma_n^i(E)=\sup|\langle T,1\rangle|,
\end{equation*}
the supremum taken over those real distributions $T$ supported on $E$ such
that the potential $K_i*T$ is in the unit ball of $L^\infty(\Rd)$.

\noindent It is clear from the definition that for each compact set $E$, $1\le i\le 2$, $$\gamma_n(E)\le\gamma_n^i(E).$$
Little is known about these capacities $\gamma_n^i(E)$, because a growth condition like \eqref{basicgrowth} (see also \eqref{ourgrowth}) 
cannot be deduced from the $L^\infty-$boundedness of only one potential (see Section 5 of \cite{mpv} for some examples on this fact for the case $n=1$). 
We show that these capacities are finite and satisfy the outer regularity property. For this, we need the reproduction formula stated below.

\begin{lemma}\label{reproductionformula2}
If a function $f(x_1,x_2)$ has continuous derivatives up to order 2, then, for $1\le i\le 2$, it is representable in the form
\begin{equation}\label{repro2}
f(x)= (\varphi_i*K_i)(x),
\end{equation}
where \begin{equation}\label{primitive}\partial_i\varphi_i=S_i(\Delta f)=c\Delta f+\widetilde S_i(\Delta f),\end{equation}for some constant $c$ and the operators 
$S_i,\;\widetilde S_i$ as in Lemma \ref{reproductionformula}.

\end{lemma}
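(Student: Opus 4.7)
The approach is a direct adaptation of the Fourier-analytic argument for Lemma \ref{reproductionformula}, now factoring only a single coordinate derivative $\partial_i$ instead of $(\partial_1,\partial_2)$. Taking the Fourier transform of the desired identity $f=\varphi_i*K_i$ and invoking Lemma \ref{fourier}, the relation becomes the algebraic equation
\begin{equation*}
\widehat{\varphi_i}(\xi)\,\widehat{K_i}(\xi) \;=\; c\,\widehat{\varphi_i}(\xi)\,\frac{\xi_i\,p(\xi_1,\xi_2)}{|\xi|^{2n}} \;=\; \widehat{f}(\xi),
\end{equation*}
where $p$ is the homogeneous polynomial of degree $2n-2$ from Lemma \ref{fourier}, which has no non-vanishing zeros; in particular $|\xi|^{2n-2}/p(\xi_1,\xi_2)=\widehat{s_i}(\xi)$ is exactly the symbol of the operator $S_i$ built in the proof of Lemma \ref{reproductionformula}.

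The symbol $\xi_i\,p/|\xi|^{2n}$ vanishes on the hyperplane $\{\xi_i=0\}$, so $\widehat{\varphi_i}$ is singular there and $\varphi_i$ is determined by $f$ only up to an additive function depending solely on $x_{3-i}$. To avoid this ambiguity I would not try to read off $\varphi_i$ directly, but work with $\partial_i\varphi_i$ instead. Multiplying the display above by $\xi_i$ removes the singularity and yields, up to constants,
\begin{equation*}
\widehat{\partial_i\varphi_i}(\xi) \;=\; c'\,\frac{|\xi|^{2n}}{p(\xi_1,\xi_2)}\,\widehat{f}(\xi) \;=\; c''\,\frac{|\xi|^{2n-2}}{p(\xi_1,\xi_2)}\,\widehat{\Delta f}(\xi) \;=\; c''\,\widehat{s_i}(\xi)\,\widehat{\Delta f}(\xi),
\end{equation*}
which, after absorbing the constant into the definition of $S_i$, is precisely \eqref{primitive}. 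The decomposition $S_i(\Delta f)=c\,\Delta f+\widetilde{S}_i(\Delta f)$ with $\widetilde{S}_i$ a Calder\'on-Zygmund operator is then automatic: it is the very same decomposition produced in Lemma \ref{reproductionformula}, coming from the Stein splitting of $p$ into harmonic polynomials of lower degree.

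To exhibit a concrete $\varphi_i$ realising both \eqref{repro2} and \eqref{primitive}, I would set $g:=S_i(\Delta f)$ and define $\varphi_i$ as a primitive of $g$ in the variable $x_i$, e.g.
\begin{equation*}
\varphi_i(x_1,x_2) \;:=\; \int_{-\infty}^{x_i} g(x_1,\ldots,t,\ldots,x_2)\,dt,
\end{equation*}
after a standard cutoff reducing to a compactly supported situation if necessary. Then $\partial_i\varphi_i=g$ by construction, and the Fourier calculation shows that $\partial_i(\varphi_i*K_i-f)=0$ distributionally, so $\varphi_i*K_i$ agrees with $f$ up to an additive function of $x_{3-i}$ alone, which is absorbed by adjusting the constant of integration.

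The only real technical point is this kernel of $\partial_i$, i.e.\ the singularity of $\widehat{\varphi_i}$ across $\{\xi_i=0\}$, which prevents reading off $\varphi_i$ as a bona fide tempered distribution directly from the algebraic relation; but since the statement only asserts the existence of some representation, the ambiguity is harmless and the proof reduces to the Fourier-symbol manipulation above together with the construction of $\varphi_i$ as a primitive.
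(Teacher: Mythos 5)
Your proof is correct and follows essentially the same Fourier-symbol route as the paper: use Lemma \ref{fourier} to write $\widehat{K_i}(\xi)=c\,\xi_i\,p(\xi)/|\xi|^{2n}$, pass to the Fourier side of \eqref{primitive}, and verify $\widehat f=\widehat{\varphi_i}\,\widehat{K_i}$, with the decomposition $S_i=c\,\id+\widetilde S_i$ coming from Duoandikoetxea's theorem exactly as in Lemma \ref{reproductionformula}. Your remarks on the singularity of $\widehat{\varphi_i}$ across $\{\xi_i=0\}$ and the explicit construction of $\varphi_i$ as an $x_i$-primitive of $S_i(\Delta f)$ are in fact a bit more careful than the paper's proof, which only records the algebraic identity on the Fourier side; this primitive construction is precisely what the paper tacitly relies on when it later applies \eqref{repro2} (e.g.\ in the estimate $\gamma_n^i(Q)\le C\,l(Q)$ and in Lemma \ref{extreg2}).
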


\begin{proof} Without loss of generality fix $i=1$.
By Lemma \ref{fourier}, we know that
\begin{equation}\label{form}
\widehat{K_1}(\xi)=c \frac{\xi_1}{|\xi|^{2n}}p(\xi_1,\xi_2),
\end{equation}
where $p(\xi_1,\xi_2)$ is a homogeneous polynomial of degree $2n-2$ with no non-vanishing zeros. 
Let $S_1$ be the operator with kernel 
$$\widehat{s_1}(\xi)=\frac{|\xi|^{2n-2}}{p(\xi_1,\xi_2)}.$$
By  \cite[Theorem 4.15, p.82]{duandikoetxea} (see also the proof of Lemma \ref{reproductionformula}), since the polynomial $p$ has no non-vanishing zeros, the operators $S_i$, $1\le i\le 2$, can be writen as $S_i=c\id+\widetilde S_i$, were $\widetilde S_1$ and $\widetilde S_2$  are Calder\'on-Zygmund operators. 

Now taking Fourier transforms on \eqref{primitive} with $i=1$, we obtain

$$\xi_1\widehat\varphi_1(\xi)=|\xi|^2\widehat f(\xi)\frac{|\xi|^{2n-2}}{p(\xi_1,\xi_2)},$$
which together with \eqref{form} gives

$$\widehat f(\xi)=c \widehat\varphi_1(\xi)\frac{\xi_1}{|\xi|^{2n}}p(\xi_1,\xi_2)=\widehat\varphi_1(\xi)\widehat K_1(\xi).$$
Therefore the lemma is proven.
\end{proof}

In \cite{mpv} it was shown that for a square $Q\subset\Rd$, the capacities $\gamma_1^i$, $1\le i\le 2$, satisfy $\gamma_1^i(Q)\le Cl(Q)$. 
We will now extend this result to the capacities $\gamma_n^i$, $1\le i\le 2$ and $n\ge 1$. 

\begin{lemma}
For any square $Q\subset\Rd$ and $1\le i\le 2$, we have $\displaystyle{\gamma_n^i(Q)\le Cl(Q).}$
\end{lemma}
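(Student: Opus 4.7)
Since $\gamma_n^i$ is translation-invariant, we may assume $Q$ is centered at the origin and fix $i=1$. Given a distribution $T$ supported on $Q$ with $\|K_1*T\|_\infty \le 1$, the plan is to build a test function $\phi \in \cc_c^\infty(\Rd)$ with $\phi \equiv 1$ on $Q$ and, through the reproduction formula of Lemma \ref{reproductionformula2}, an explicit function $\varphi_1 \in L^1(\Rd)$ with $\phi = \varphi_1 * K_1$ and $\|\varphi_1\|_{L^1(\Rd)} \le C\,l(Q)$. Granted this, the duality
\begin{equation*}
|\langle T,1\rangle| = |\langle T, \phi\rangle| = |\langle T, \varphi_1 * K_1 \rangle| = |\langle K_1 * T, \varphi_1\rangle| \le \|K_1 * T\|_\infty\,\|\varphi_1\|_{L^1(\Rd)} \le C\,l(Q)
\end{equation*}
yields $\gamma_n^1(Q) \le Cl(Q)$ after taking supremum.

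The naive choice of a smooth bump $\phi \equiv 1$ on $Q$ fails: Lemma \ref{fourier} shows that $\widehat{K_1}$ vanishes along $\{\xi_1 = 0\}$, so the candidate $\widehat{\varphi_1} = \widehat\phi/\widehat{K_1}$ has a non-integrable $1/\xi_1$ singularity unless $\widehat\phi$ already vanishes on $\{\xi_1 = 0\}$. The remedy is to choose $\phi$ as a $\partial_1$-derivative. Take $\eta \in \cc_c^\infty(2Q)$ with $\eta \equiv 1$ on $Q$, $\|\nabla\eta\|_\infty \le C/l(Q)$, $\|\Delta\eta\|_\infty \le C/l(Q)^2$, set $\chi(x) = x_1\,\eta(x)$, and define $\phi := \partial_1\chi$. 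Then $\phi \in \cc_c^\infty(2Q)$ and, since $\partial_1\eta = 0$ and $\eta = 1$ on $Q$, $\phi \equiv 1$ on $Q$. Moreover $\widehat\phi(\xi) = i\xi_1\,\widehat\chi(\xi)$ supplies precisely the $\xi_1$ factor needed to cancel the zero of $\widehat{K_1}$, and a direct Fourier computation using Lemma \ref{fourier} yields
\begin{equation*}
\widehat{\varphi_1}(\xi) \;=\; \frac{\widehat\phi(\xi)}{\widehat{K_1}(\xi)} \;=\; c\,\frac{|\xi|^{2n}}{p(\xi)}\,\widehat\chi(\xi) \;=\; c\,\widehat{S_1(\Delta\chi)}(\xi),
\end{equation*}
so $\varphi_1 = c\,S_1(\Delta\chi)$, with $\phi = \varphi_1 * K_1$ verified by Fourier inversion.

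It remains to bound $\|\varphi_1\|_{L^1(\Rd)}$. Directly, $\Delta\chi = 2\,\partial_1\eta + x_1\,\Delta\eta$ is supported in $2Q$ with $\|\Delta\chi\|_\infty \le C/l(Q)$, hence $\|\Delta\chi\|_{L^p(\Rd)} \le C\,l(Q)^{2/p-1}$ for every $1\le p\le\infty$; in particular $\|\Delta\chi\|_{L^1} \le Cl(Q)$ and, by the divergence theorem, $\int\Delta\chi = 0$. Write $S_1 = a\,\id + \wt{S_1}$ with $\wt{S_1}$ a smooth convolution Calder\'on-Zygmund operator (as in the proof of Lemma \ref{reproductionformula2}); the identity contribution satisfies $\|a\,\Delta\chi\|_{L^1} \le Cl(Q)$. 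For the Calder\'on-Zygmund contribution, on $4Q$ H\"older's inequality and the $L^2$-boundedness of $\wt{S_1}$ give
\begin{equation*}
\|\wt{S_1}(\Delta\chi)\|_{L^1(4Q)} \le |4Q|^{1/2}\,\|\wt{S_1}(\Delta\chi)\|_{L^2(\Rd)} \le C\,l(Q)\,\|\Delta\chi\|_{L^2(\Rd)} \le Cl(Q),
\end{equation*}
while on $(4Q)^c$ the mean-zero cancellation of $\Delta\chi$ allows a first-order Taylor expansion of the kernel of $\wt{S_1}$, producing $|\wt{S_1}(\Delta\chi)(x)| \le C\,l(Q)^2/|x|^3$, which integrates to $Cl(Q)$ over $(4Q)^c$. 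Summing these contributions, $\|\varphi_1\|_{L^1(\Rd)} \le Cl(Q)$.

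The main obstacle is the first step: recognizing that the direct bump choice cannot yield an $L^1$-integrable $\varphi_1$ through the reproduction formula, and selecting $\phi = \partial_1\chi$ so that the $\xi_1$ in $\widehat\phi$ cancels the vanishing of $\widehat{K_1}$ along $\{\xi_1=0\}$. Once $\varphi_1 = c\,S_1(\Delta\chi)$ has been identified, the $L^1$ bound is a routine combination of $L^2$-boundedness, H\"older, and standard Calder\'on-Zygmund decay together with the mean-zero cancellation of $\Delta\chi$.
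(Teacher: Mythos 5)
Your proof is correct, and it rests on the same underlying cancellation as the paper's but with a cleaner execution. The paper builds a product test function $\varphi_Q(x)=\varphi_1(x_1)\varphi_2(x_2)$ that equals $1$ on $Q$ and satisfies $\int\varphi_1=0$; it then runs Lemma~\ref{reproductionformula2} literally, defining $\psi(x_1,x_2)=\int_{-\infty}^{x_1}S_1(\Delta\varphi_Q)(t,x_2)\,dt$ and bounding $\|\psi\|_{L^1}$ via a six-way case analysis ($A_1,A_2,A_3,B_1,B_2,B_3$) over regions in $x$ and ranges of $x_1$. Your construction is structurally the same — the side condition $\int\varphi_1=0$ is exactly what makes the paper's $\varphi_Q$ a $\partial_1$-derivative of a compactly supported function — but you build that feature in from the outset by taking $\phi=\partial_1(x_1\eta)$, so that $\widehat\phi$ vanishes on $\{\xi_1=0\}$ and cancels the zero of $\widehat{K_1}$ from Lemma~\ref{fourier}. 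The reproducing density is then the honest Calder\'on--Zygmund image $\varphi_1=c\,S_1(\Delta\chi)$ with no antiderivative left to control, and the $L^1$ bound follows from standard CZ technology: H\"older plus $L^2$-boundedness near $Q$, and the gradient estimate $|\nabla H(x)|\lesssim|x|^{-3}$ together with the automatic moment cancellation $\int\Delta\chi=0$ away from $Q$. In effect you replace the paper's explicit region-by-region integration with a two-case near/far split; both proofs identify the same obstruction (non-integrability of $\widehat\phi/\widehat{K_1}$ without the $\xi_1$ factor), but yours is shorter and more transparent. The only small point to flag — which you correctly state — is that translating $Q$ to the origin is needed so the explicit factor $x_1$ in $\chi=x_1\eta$ satisfies $|x_1|\lesssim l(Q)$ on $2Q$, giving $\|\Delta\chi\|_\infty\lesssim l(Q)^{-1}$; translation invariance of $\gamma_n^i$ licenses this.
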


\begin{proof}
Without loss of generality assume $i=1$. Let $T$ be a distribution supported on $Q$ such that the potential $K_1*T\in L^\infty(\Rd)$. 
Write $Q=I_1\times I_2$, with $I_j$, $1\le j\le 2$, being intervals in $\mathbb{ R }$, and let $\varphi_Q\in{\mathcal C}_0^\infty(2Q)$ be such that $\|\varphi_Q\|_\infty\le C$,
$\|\nabla\varphi_Q\|_\infty\le C\;l(Q)^{-1}$, $\|\nabla^2\varphi_Q\|_\infty\le C\;l(Q)^{-2}$ and $$\varphi_Q(x)=\varphi_1(x_1)\varphi_2(x_2),$$ with $\varphi_1(x_1)=1$ on $I_1$, $\varphi_1(x_1)=0$ on $(2I_1)^c$, $\int_{-\infty}^\infty\varphi_1=0$, $\varphi_2\geq 0$, $\varphi_2\equiv 1$ on $I_2$ and $\varphi_2\equiv 0$ on $(2I_2)^c$. 

Since our distribution $T$ is supported on $Q$, using \eqref{repro2} with $f$ and $\varphi_1$ replaced by $\varphi_Q$ and $\psi$ respectively, 
\begin{equation*}
|\langle T,1 \rangle|=|\langle T,\varphi_Q \rangle|=|\langle K_1*T, \psi \rangle|
\le \|K_1*T\|_\infty\|\psi\|_1,
\end{equation*}
where $\psi(x_1,x_2)=\int_{-\infty}^{x_1}\Delta\varphi_Q(t,x_2)dt+\int_{-\infty}^{x_1}\widetilde S_i(\Delta\varphi_Q)(t,x_2)dt$. Therefore, the lemma will be proven
 once we show that $\|\psi\|_1\le Cl(Q)$.\newline

Set $\psi_1(x_1,x_2)=\int_{-\infty}^{x_1}\Delta\varphi_Q(t,x_2)dt$. Notice that since the support of $\varphi_Q$ is $2Q$ and $\int_{-\infty}^\infty\varphi_1=0$, then 
the support of $\psi_1$ is also $2Q$ and writing $2I_1=[a,b]$, we get
$$\|\psi_1\|_1\le\|\partial_1\varphi_Q\|_1+\int_{2Q}|\partial_2^2\varphi_2(x_2)|\big|\int_{a}^{x_1}\varphi_1(t)dt\big|dx_1dx_2\le Cl(Q).$$

Set $\psi_2(x_1,x_2)=\int_{-\infty}^{x_1}\widetilde S_1(\Delta\varphi_Q)(t,x_2)dt$ and let $K(x)=K(x_1,x_2)$ be the kernel of $\widetilde S_1$. Then,
\begin{equation*}\begin{split}
\|\psi_2\|_1&=\int_{3Q}|\psi_2(x)|dx+\int_{(3Q)^c}|\psi_2(x)|dx\\\\&\le\int_{3Q}\left| \int_{-\infty}^{x_1} (K*\Delta\varphi_Q)(t,x_2)dt\right|dx+\int_{(3Q)^c}\left |\int_{-\infty}^{x_1} (K*\Delta\varphi_Q)(t,x_2)dt\right |dx\\\\&=A+B.
\end{split}
\end{equation*}

Recall that $Q=I_1\times I_2$ and  write $3I_1=[z_1,z_2]$. Then

\begin{equation*}
\begin{split}
B&=\int_{(3Q)^c}\left|\int_{-\infty}^{x_1}(K*\Delta\varphi_Q)(t,x_2)dt\right|dx\\\\&\le\int_{\begin{subarray}{l}(3Q)^c\\x_1<z_1\end{subarray}}\left|\int_{-\infty}^{x_1}(K*\Delta\varphi_Q)(t,x_2)dt\right|dx+\int_{\begin{subarray}{l}(3Q)^c\\x_1\in[z_1,z_2]\end{subarray}}\left|\int_{-\infty}^{x_1}(K*\Delta\varphi_Q)(t,x_2)dt\right|dx\\\\&+\int_{\begin{subarray}{l}(3Q)^c\\x_1>z_2\end{subarray}}\left|\int_{-\infty}^{x_1}(K*\Delta\varphi_Q)(t,x_2)dt\right|dx=B_1+B_2+B_3.
\end{split}
\end{equation*}

We deal now with $B_1$. By Fubini and standard estimates for the kernel of a Calder\'on-Zygmund operator we get

\begin{equation*}
\begin{split}
B_1&=\int_{\begin{subarray}{l}(3Q)^c\\x_1<z_1\end{subarray}}\left|\int_{-\infty}^{x_1}(K*\Delta\varphi_Q)(t,x_2)dt\right|dx\\\\&\le C \int_{\begin{subarray}{l}(3Q)^c\\x_1<z_1\end{subarray}}\int_{2Q}|\varphi_Q(w)|\int_{-\infty}^{x_1}\frac{dt}{|w-(t,x_2)|^4}dw\;dx\\\\&\le Cl(Q)^2\int_{\begin{subarray}{l}(3Q)^c\\x_1<z_1\end{subarray}}\int_{-\infty}^{x_1}\frac{dt}{|(t,x_2)|^4}dw\;dx.
\end{split}
\end{equation*}
Using that 
$$\int_{-\infty}^{x_1}\frac{dt}{|(t,x_2)|^4}\le\frac{C}{|x|^3},$$
we get
\begin{equation*}
B_1\le Cl(Q)^2\int_{(3Q)^c}\frac{dx}{|x|^3}\le Cl(Q)^2l(Q)^{-1}=Cl(Q).
\end{equation*}
Now we split $B_2$ in two terms:
\begin{equation*}
B_2=\int_{\begin{subarray}{l}(3Q)^c\end{subarray}}\left|\int_{-\infty}^{z_1}(K*\Delta\varphi_Q)(t,x_2)dt\right|dx+\int_{\begin{subarray}{l}(3Q)^c\\x_1\in[z_1,z_2]\end{subarray}}\left|\int_{z_1}^{x_1}(K*\Delta\varphi_Q)(t,x_2)dt\right|dx.
\end{equation*}
The first term above is $B_1$ with $x_1$ replaced by $z_1$. For the second term in $B_2$, say $B_{22}$, we use Tonelli and estimates for the kernel of a Calder\'on-Zygmund operator. Then we obtain
\begin{equation*}
\begin{split}
B_{22}&=\int_{\begin{subarray}{l}(3Q)^c\\x_1\in[z_1,z_2]\end{subarray}}\left|\int_{z_1}^{x_1}(K*\Delta\varphi_Q)(t,x_2)dt\right|dx\\\\&\le\int_{\begin{subarray}{l}(3Q)^c\\x_1\in[z_1,z_2]\end{subarray}}\int_{z_1}^{x_1}\int_{2Q}|\varphi_Q(w)||\Delta K(w-(t,x_2))|\;dw\;dt\;dx\\\\&\le C\;\int_{\begin{subarray}{l}(3Q)^c\\x_1\in[z_1,z_2]\end{subarray}}\int_{2Q}|\varphi_Q(w)|\int_{z_1}^{x_1}\frac{dt}{|w-(t,x_2)|^4}\;dw\;dx\\\\&\le Cl(Q)l(Q)^2l(Q)^{-2}=Cl(Q).
\end{split}
\end{equation*}

To deal with $B_3$, notice that since $\displaystyle{\int_{-\infty}^\infty \widetilde S_1(\Delta\varphi_Q)(t,x_2)dt=0}$, one has $$\int_{-\infty}^{x_1}(K*\Delta\varphi_Q)(t,x_2)dt=-\int_{x_1}^{\infty}(K*\Delta\varphi_Q)(t,x_2)dt,$$
so one argues as above.  

We are now left with the term $A$. Recall that $3I_1=[z_1,z_2]$ and write
\begin{equation*}
\begin{split}
A&=\int_{3Q}\left|\int_{-\infty}^{x_1}(K*\Delta\varphi_Q)(t,x_2)dt\right|dx\\\\&\le\int_{\begin{subarray}{l}3Q\\x_1<z_1\end{subarray}}\left|\int_{-\infty}^{x_1}(K*\Delta\varphi_Q)(t,x_2)dt\right|dx+\int_{\begin{subarray}{l}3Q\\x_1\in[z_1,z_2]\end{subarray}}\left|\int_{-\infty}^{x_1}(K*\Delta\varphi_Q)(t,x_2)dt\right|dx\\\\&+\int_{\begin{subarray}{l}3Q\\x_1>z_2\end{subarray}}\left|\int_{-\infty}^{x_1}(K*\Delta\varphi_Q)(t,x_2)dt\right|dx=A_1+A_2+A_3.
\end{split}
\end{equation*}
By Fubini and standard estimates for the kernel of a Calder\'on-Zygmund operator, we obtain
\begin{equation*}
\begin{split}
A_1&=\int_{\begin{subarray}{l}3Q\\x_1<z_1\end{subarray}}\left|\int_{-\infty}^{x_1}\int_{2Q}\Delta\varphi_Q(w)K(w-(t,x_2))dwdt\right|dx\\\\&
\le C \int_{\begin{subarray}{l}3Q\\x_1<z_1\end{subarray}}\int_{2Q}|\Delta\varphi_Q(w)|\int_{-\infty}^{x_1}\frac{dt}{|w-(t,x_2)|^2}dwdx\\\\&\le C\;\int_{3Q}\frac{dx}{|x|}\le Cl(Q).
\end{split}
\end{equation*}
Now we split $A_2$ in two terms
\begin{equation*}
\begin{split}
A_2&=\int_{\begin{subarray}{l}3Q\end{subarray}}\left|\int_{-\infty}^{z_1}(K*\Delta\varphi_Q)(t,x_2)dt\right|dx+\int_{\begin{subarray}{l}3Q\\x_1\in[z_1,z_2]\end{subarray}}\left|\int_{z_1}^{x_1}(K*\Delta\varphi_Q)(t,x_2)dt\right|dx\\\\&=A_{21}+A_{22}.
\end{split}
\end{equation*}
The term $A_{21}$ is treated as $A_1$ with $x_1$ replaced by $z_1$. For $A_{22}$, we use Tonelli, the Cauchy-Schwarz inequality and the fact that 
Calder\'on-Zygmund operators are bounded in $L^2$. Then we get,
\begin{equation*}
\begin{split}
A_{22}&\le\int_{3I_2}\int_{3I_1}\int_{3I_1}\left|\widetilde S_1(\Delta\varphi_Q)(t,x_2)\right|dt\;dx_1\;dx_2\\\\&=
\int_{3I_1}\int_{3I_2}\int_{3I_1}\left|\widetilde S_1(\Delta\varphi_Q)(t,x_2)\right|dt\;dx_2\;dx_1\\\\&\le C\;l(Q)\;\|\widetilde S_1(\Delta\varphi_Q)\|_{L^1(3Q)}\le C\;l(Q)^2\;\|\Delta\varphi_Q\|_2\le Cl(Q).
\end{split}
\end{equation*}
The estimate of $A_3$ is obtained similarly to $B_3$.
\end{proof}

As a consequence of the above result we have

\begin{co} For any compact set $E\subset\Rd$ and $1\le i\le 2$, $\displaystyle{\gamma_n^i(E)\le C\diam(E)}$.
\end{co}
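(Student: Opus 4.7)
The plan is to reduce the general case to the lemma just proved by a simple covering and monotonicity argument. Given a compact set $E \subset \Rd$, I will enclose $E$ inside a square $Q$ whose side length is comparable to $\diam(E)$; for instance, any closed square $Q$ with $E \subset Q$ and $l(Q) \leq \sqrt{2}\,\diam(E)$ will do. The preceding lemma then gives $\gamma_n^i(Q) \leq C\,l(Q) \leq C'\,\diam(E)$, so it suffices to establish the monotonicity $\gamma_n^i(E) \leq \gamma_n^i(Q)$ whenever $E \subset Q$.

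For monotonicity, I would argue directly from the definition: if $T$ is any real distribution supported on $E$ with $\|K_i * T\|_\infty \leq 1$, then $\supp T \subset E \subset Q$, so $T$ is an admissible competitor in the supremum defining $\gamma_n^i(Q)$. Consequently $|\langle T, 1\rangle| \leq \gamma_n^i(Q)$, and taking the supremum over all admissible $T$ on $E$ yields $\gamma_n^i(E) \leq \gamma_n^i(Q)$.

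Chaining the two inequalities gives $\gamma_n^i(E) \leq \gamma_n^i(Q) \leq C\,l(Q) \leq C\,\diam(E)$, which is the desired estimate. There is no real obstacle here; the entire content of the corollary is already inside the preceding lemma, and the only ingredient added is the trivial observation that the capacity $\gamma_n^i$ is monotone with respect to set inclusion, which is immediate from its definition as a supremum over distributions supported on the set.
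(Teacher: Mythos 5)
Your proof is correct and is precisely the argument the paper has in mind: the corollary is stated as an immediate consequence of the preceding lemma (the paper gives no further proof), and the intended deduction is exactly the cover-by-a-square plus monotonicity-of-the-capacity step you spell out.
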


We show now that the capacities $\gamma_n^i$, $1\le i\le 2$, satisfy the exterior regularity property, like the $\gamma_n$ (see Lemma \ref{extreg}).

\begin{lemma}\label{extreg2}
Let $\{E_k\}_k$ be a decreasing sequence of compact sets in $\Rd$, with intersection the compact set $E\subset\Rd$. 
Then, for $1\le i\le 2$, $\displaystyle{\gamma_n^i(E)=\lim_{k\to\infty}\gamma_n^i(E_k).}$
\end{lemma}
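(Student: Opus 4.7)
The trivial direction $\gamma_n^i(E)\le\lim_{k\to\infty}\gamma_n^i(E_k)$ is immediate from monotonicity. For the reverse inequality I would follow the scheme of Lemma \ref{extreg}, replacing the two-kernel reproduction formula (Lemma \ref{reproductionformula}) by the one-kernel formula of Lemma \ref{reproductionformula2}. Pick distributions $T_k$ supported on $E_k$ with $\|K_i*T_k\|_\infty\le 1$ and $|\langle T_k,1\rangle|>\gamma_n^i(E_k)-1/k$. By weak-$*$ compactness, after passing to a subsequence the potentials $f_k:=K_i*T_k$ converge weakly-$*$ in $L^\infty(\Rd)$ to some $f$ with $\|f\|_\infty\le 1$.

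The core step is to show that $\{T_k\}$ converges in the sense of distributions to some $T$. For $\vphi\in\cc^\infty_0(\Rd)$, Lemma \ref{reproductionformula2} writes $\vphi=\psi*K_i$ with $\partial_i\psi=S_i(\Delta\vphi)$, so (using that $K_i$ is odd)
$$\langle T_k,\vphi\rangle=-\int\psi(y)\,(K_i*T_k)(y)\,dy,$$
and whenever $\psi$ can be chosen in $L^1(\Rd)$ the right-hand side converges by weak-$*$ convergence of $f_k$. The natural primitive is not $L^1$ for a generic test function, so I would first decompose $\vphi=\vphi_0+r$, where $r(x)=\chi(x_i)\,\tilde\eta(x_j)$ with $\chi\in\cc^\infty_0(\mathbb{R})$, $\int\chi=1$, $j\ne i$, and $\tilde\eta(x_j)=\int\vphi(x)\,dx_i$, so that $\vphi_0:=\vphi-r$ satisfies $\int\vphi_0\,dx_i=0$ for every $x_j$. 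For $\vphi_0$, arguing exactly as in the proof of the preceding bound $\gamma_n^i(Q)\le Cl(Q)$, the corresponding $\psi_0$ belongs to $L^1$ (the primitive in $x_i$ of a function with vanishing $x_i$-integral is compactly supported in the identity part of $S_i$, and the $\widetilde S_i$ contribution is handled via the kernel estimates used in that lemma), so $\langle T_k,\vphi_0\rangle$ converges. For the tensor-product remainder $r$, the $L^1$ trick fails; here I would instead use $\gamma_n^i(E_1)\le C\,\diam(E_1)$ (from the preceding corollary) together with $\|K_i*T_k\|_\infty\le 1$ to produce a uniform bound $|\langle T_k,r\rangle|\le C(r)$, and extract convergence along a diagonal subsequence by varying $\tilde\eta$ over a countable dense family in $\cc^\infty_0(\mathbb{R})$.

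Once distributional convergence $T_k\to T$ is established, the remaining steps mirror Lemma \ref{extreg}. The support of $T$ sits in $E$: any $\vphi\in\cc^\infty_0$ vanishing on an open neighborhood of $E$ vanishes on $E_k$ for $k$ large (by compactness and $E_k\searrow E$), so $\langle T_k,\vphi\rangle=0$ eventually. For the bound $\|K_i*T\|_\infty\le 1$ I would mollify: for a radial $\chi_\ve$ with $\int\chi_\ve=1$ supported in $B(0,\ve)$,
$$(\chi_\ve*K_i*T)(x)=\langle T,(\chi_\ve*K_i)(x-\cdot)\rangle=\lim_k\langle T_k,(\chi_\ve*K_i)(x-\cdot)\rangle$$
(after multiplying the test function by a fixed cutoff equal to $1$ on a neighborhood of $E_1$, which is legitimate since all $T_k$ are supported there), and each term in the limiting sequence is at most $\|\chi_\ve*(K_i*T_k)\|_\infty\le 1$ in modulus. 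Sending $\ve\to 0$ gives $\|K_i*T\|_\infty\le 1$. Finally, $\langle T_k,1\rangle=\langle T_k,\eta\rangle$ for any $\eta\in\cc^\infty_0$ identically $1$ near $E_1$, so $|\langle T,1\rangle|=\lim_k|\langle T_k,1\rangle|\ge\lim_k\gamma_n^i(E_k)$, yielding $\gamma_n^i(E)\ge\lim_k\gamma_n^i(E_k)$.

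The main obstacle, and the only real departure from the proof of Lemma \ref{extreg}, is the test functions $\vphi$ lacking vanishing $x_i$-integral: the single-kernel reproduction formula does not produce an $L^1$ representative $\psi$ for such $\vphi$, so the tensor-product remainder $r$ must be handled separately, using the finiteness of $\gamma_n^i$ on compact sets together with a density/subsequence extraction argument.
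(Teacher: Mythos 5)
Your overall architecture --- pick extremizing $T_k$, extract weak-$*$ convergent potentials, show distributional convergence, identify the support and the potential bound of the limit --- is the same as the paper's, and your mollification and support arguments at the end match the paper's closely. The difference, and where a genuine gap appears, is in how you establish that $\langle T_k,\vphi\rangle$ converges for test functions $\vphi$ that do not have vanishing $x_i$-integral.

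You correctly diagnose the obstruction: the single-kernel reproduction formula of Lemma \ref{reproductionformula2} produces an $L^1$ primitive only when $\int \vphi\,dx_i=0$, and for a generic $\vphi$ this fails. But your fix for the tensor-product remainder $r(x)=\chi(x_i)\,\tilde\eta(x_j)$ does not go through as stated. You assert a uniform bound $|\langle T_k,r\rangle|\le C(r)$ coming from $\gamma_n^i(E_1)\le C\,\diam(E_1)$ together with $\|K_i*T_k\|_\infty\le 1$. Finiteness of $\gamma_n^i(E_1)$ controls only $|\langle T_k,1\rangle|$; it says nothing a priori about $|\langle T_k,r\rangle|$ for a tensor product $r$ that is not constant on a neighborhood of $E_1$. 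Indeed, the very lemma $\gamma_n^i(Q)\le C\,l(Q)$ that you cite rests on a specially chosen test function $\vphi_Q=\vphi_1(x_1)\vphi_2(x_2)$ with $\int\vphi_1=0$ and, crucially, on the cancellation $\int_{-\infty}^{\infty}\widetilde S_1(\Delta\vphi_Q)(t,x_2)\,dt=0$ which that zero-mean hypothesis supplies. Your $r$ has $\int\chi=1$, so the same chain of estimates breaks at exactly the point it is needed: the primitive $\int_{-\infty}^{x_i}\Delta r\,dt$ contains the term $\tilde\eta''(x_j)\int_{-\infty}^{x_i}\chi$, which tends to $\tilde\eta''(x_j)$ as $x_i\to\infty$ and so is not in $L^1$. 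Without the uniform bound, the proposed diagonal extraction over a countable dense family of $\tilde\eta$'s has nothing to bite on.

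The paper avoids the decomposition $\vphi=\vphi_0+r$ altogether with a translate trick that handles every $\vphi$ at once. Choose $u=(u_1,0)$ (for $i=1$) so large that $\supp\vphi(\cdot-u)$ misses a fixed neighborhood of $E$; then for $k$ large, $\supp T_k\subset E_k$ lies in that neighborhood, so $\langle T_k,\vphi\rangle=\langle T_k,\vphi-\vphi(\cdot-u)\rangle$. The difference $\vphi-\vphi(\cdot-u)$ has vanishing $x_1$-integral for each $x_2$, hence an antiderivative $\psi\in\cc^\infty_0(\Rd)$ with $\partial_1\psi=\vphi-\vphi(\cdot-u)$. Now the reproducing formula gives $\langle T_k,\vphi\rangle=\langle T_k*K_1,S_1(\Delta\psi)\rangle$ with $S_1(\Delta\psi)\in L^1$ by Lemma \ref{sublemma}, and weak-$*$ convergence of $T_k*K_1$ closes the argument. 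Note that this simultaneously supplies the uniform bound you were after: $|\langle T_k,r\rangle|\le\|S_1(\Delta\psi)\|_1$, so if you want to keep your decomposition you should replace the appeal to $\gamma_n^i(E_1)<\infty$ by precisely this translate-and-integrate construction applied to $r$.
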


\begin{proof}
Without loss of generality set $i= 1$. Let us see that $\lim_{k\to\infty}\gamma^1_n(E_k)=\gamma^1_n(E)$. Clearly, the limit exists and $\lim_{k\to\infty}\gamma^1_n(E_k)\geq \gamma_n^1(E)$. To prove the converse inequality,
let $T_k$ be a distribution supported on $E_k$ such that $f_k=K_1*T_k$ is in the unit ball of $L^\infty(\Rd)$
and 
$$\gamma_n^1(E_k)-\frac1k < |\langle T_k,1\rangle|\leq \gamma_n^1(E_k).$$
By taking a subsequence if necessary, we may assume that $f_k$ converges weakly $*$ in $L^\infty(\Rd)$ to some function 
$f$ such that $\|f\|_\infty\leq 1$.

We will show that $T_k$ converges to some distribution $T$ such that $T*K_1$ is also  in the unit ball of $L^\infty(\Rd)$.
Then 
$$\gamma_n^1(E)\geq \langle T,\,1\rangle = \lim_{k\to\infty}\langle T_k,\,1\rangle = 
\lim_{k\to\infty}\gamma_n^1(E_k),$$ and we will be done.

Let us first check that the limit of $\{T_k\}_k$ exists in the topology of distributions. 
This is equivalent to saying that, for any $\vphi\in\CC_c^\infty(\Rd)$,
the limit $\lim_{k\to\infty}\langle T_k,\vphi\rangle$ exists.
To this end, let $u$ be a vector of the form $u=(u_1,0)$ such that
$$\supp(\vphi(\cdot - u)) \cap U_1(E)=\varnothing,$$
where $U_1(E)$ denotes the $1$-neighborhood of $E$. In this way, for $k$ big enough,
$$\langle T_k,\vphi\rangle = \langle T_k,\vphi- \vphi(\cdot - u)\rangle.$$
It is easy to check that there exists a function $\psi\in\CC_c^\infty(\Rd)$ such that $\partial_1\psi = \vphi-\vphi(\cdot - u)$. Then, using the reproducing formula \eqref{repro2}, we deduce that
\begin{align*}
\langle T_k,\vphi\rangle & = \langle T_k,\partial_1\psi\rangle = \langle T_k,\,S_1(\Delta \psi)*K_1\rangle = \langle T_k*K_1,\,S_1(\Delta \psi)\rangle,
\end{align*}
which is convergent, since $S_1(\Delta \psi)\in L^1(\Rd)$ arguing as in Lemma \ref{sublemma}, and $f_k = T_k*K_1$ is weak * convergent in 
$L^\infty(\Rd)$.

To see that $T*K_1$ is  in the unit ball of $L^\infty(\Rd)$, 
we take a radial function $\chi\in\CC^\infty(\Rd)$, $\int\chi=1$, supported in the unit ball and, as usual, we denote $\chi_\ve(x)
=\ve^{-2}\chi(\ve^{-1}x)$. Then 
it is enough to prove that 
$\chi_\ve*T*K_1$ is in the unit ball of $L^\infty(\Rd)$ for all $\ve>0$.
This follows easily: denoting $K_1^\ve = \chi_\ve*K_1$, for each $x\in\Rd$, we have
$$T_k* K_1^\ve(x) = \langle T_k,\,K_1^\ve(x-\cdot)\rangle.$$
Notice moreover that $\|T_k* K_1^\ve \|_\infty =\|\chi_\ve * (T_k*K_1)\|_\infty\le 1$.
Now, let $\psi_0\in\CC_c^\infty(\Rd)$ be such that it equals $1$ in $U_1(E)$, so that $T_k = \psi_0\,T_k$ for $k$ big enough. Then
$$T_k* K_1^\ve(x) = \langle \psi_0\,T_k,\,K_1^\ve(x-\cdot)\rangle = \langle T_k,\,\psi_0\,K_1^\ve(x-\cdot)\rangle,$$
which converges to $\langle T,\,\psi_0\,K_1^\ve(x-\cdot)\rangle = \langle T,\,K_1^\ve(x-\cdot)\rangle = T* K_1^\ve(x)$
as $k\to\infty$. Since $|T_k* K_1^\ve(x)|\leq 1$ for all $k$, we deduce that $|T* K_1^\ve(x)|\leq 1$ as wished, too.
\end{proof}

\begin{remark}
With little additional effort one can show that $T*K_1=f$ in the above proof.
\end{remark}


\begin{thebibliography}{MaMeV}

\bibitem[A]{Ahlfors}
{\sc L. Ahlfors},
{\em Bounded analytic functions,}
Duke Math. J. {\bf 14} (1947), 1--11.

\bibitem[ACrL]{acl}
{\sc N. Aronszajn, T. Creese and L. Lipkin}, 
{\em Polyharmonic functions,} 
Oxford  Mathematical Monographs, Oxford University Press, New York, (1983). 

\bibitem[C]{C}
{\sc A. P. Calder\'{o}n},
{\em Acceptance speech for the Bocher price},
Notices of the American Math. Soc. {\bf 26} (1979), 97--99.

\bibitem[ChMPT]{cmpt}
{\sc V. Chousionis, J. Mateu, L. Prat and X. Tolsa},
{\em CZ kernels and rectifiability in the plane},
Preprint, 2011.

\bibitem[Ch]{christ}
{\sc M. Christ}, 
{\em Lectures on singular integral operators}, 
CBMS Regional Conference Series in Mathematics, vol. 77, Published for the Conference Board of the Mathematical
Sciences, Washington, DC, 1990.

\bibitem[D]{davidvitushkin}
{\sc G. David},
{\em Unrectifiable 1-sets have vanishing analytic capacity},
Rev. Mat. Iberoamericana {\bf 14} (1998), no.~2, 369--479.

\bibitem[D{\O}]{do}
{\sc A. M. Davie and B. {\O}ksendal}, 
{\em Analytic capacity and differentiability properties of finely harmonic functions}, 
Acta Math. 149 (1982), 127--152.

\bibitem[Du]{duandikoetxea}
{\sc J. Duoandikoetxea},
{\em Fourier Analysis},
American Mathematical Society, 2001.

\bibitem[F]{Farag}
{\sc H. M. Farag},
{\em The Riesz kernels do not give rise to higher-dimensional analogues of the Menger-Melnikov curvature}, 
Publ. Mat. 43 (1999), no. 1, 251--260.

\bibitem[Ga1]{garnett}
{\sc J. Garnett},
{\em Analytic Capacity and Measure,}
Lecture notes in Math. {\bf 297}, Springer-Verlag, Berlin, 1972.

\bibitem[GKP]{k}
{\sc R. L. Graham, D. E. Knuth, and O. Patashnik} , 
{\em Concrete Mathematics}, 
A Foundation for Computer Science, Second ed., Reading, MA: Addison-Wesley Publishing Company, 1994.

\bibitem[H]{huovinen}
{\sc P. Huovinen},
{\em A nicely behaved singular integral on a purely unrectifiable set,}
Proc. Amer. Math. Soc. 129 (2001), no. 11, 3345--3351.

\bibitem[L\'e]{Leger} 
{\sc J.C. L\'eger},
{\em Menger curvature and rectifiability,} 
Ann. of Math. 149 (1999), 831--869.

\bibitem[LZ]{lz}
{\sc R. Lyons and K. Zumbrun}, 
{\em Homogeneous partial derivatives of radial functions,}
Proc. Amer. Math. Soc. 121(1) (1994), 315--316. 

\bibitem[MPrVe]{mpv}
{\sc J. Mateu, L. Prat and J. Verdera},
{\em Potential theory of scalar Riesz kernels},
to appear in Indiana Univ. Math. J.

\bibitem[MaMeV]{mmv}
{\sc P. Mattila, M. S. Melnikov and J. Verdera},
{\em The Cauchy integral, analytic capacity, and uniform rectifiability,}
Ann. of Math.~(2) {\bf 144} (1996), 127--136.

\bibitem[MzS]{mazya}
{\sc V. G. Maz'ya and T. O. Shaposhnikova},
{\em Theory of Multipliers in spaces of differentiable functions,}
Monographs and studies in Mathematics 23. Pitman (Advanced Publishing Program), Boston, MA, 1985.

\bibitem[MTV]{mtv}
{\sc J. Mateu, X. Tolsa and J. Verdera},
{\em The planar Cantor sets of zero analytic capacity and the local $T(b)$-theorem},
J. Amer. Math. Soc. {\bf 16} (2003), no.~1, 19--28.

\bibitem[Me]{me}
{\sc M. S. Melnikov},
{\em Analytic capacity: discrete approach and curvature of measure,}
Sbornik: Mathematics {\bf 186} (1995), no.~6, 827--846.

\bibitem[MeV]{mv}
{\sc M. S. Melnikov and J. Verdera},
{\em A geometric proof of the $L^2$ boundedness of the Cauchy integral on Lipschitz graphs,}
Inter. Math. Res. Not. {\bf 7} (1995), 325--331.

\bibitem[NTV1]{ntv2}
{\sc F. Nazarov, S. Treil and A. Volberg},  
{\em Weak type estimates and Cotlar inequalities for Calder\'on-Zygmund operators on nonhomogeneous spaces}, 
Internat. Math. Res. Notices (1998), 463--487.

\bibitem[NTV2]{ntv}
{\sc F. Nazarov, S. Treil and A. Volberg},
{\em The $T(b)$-theorem on non-homogeneous spaces that proves a conjecture of Vitushkin},
CRM preprint, December 2002.

\bibitem[P]{pa}
{\sc H. Pajot},
{\em Analytic capacity, Rectifiability, Menger curvature and the Cauchy Integral,}
Lecture Notes in Math. {\bf 1799}, Springer, Berlin, 2002.

\bibitem[MP]{mp}
{\sc P. Mattila and P. V. Paramonov}, 
{\em On geometric properties of harmonic Lip1-capacity}, 
Pacific J. Math. 171 (1995), 469--491.

\bibitem[St]{St}
{\sc E. M. Stein},
{\em Singular Integrals and differentiability properties of functions,}
Princeton University Press, Princeton, 1970.

\bibitem[T1]{tolsaindiana}
{\sc X. Tolsa},
{\em On the analytic capacity $\gamma_+$,}
Indiana Univ. Math. J. 51(2) (2002), 317--344.

\bibitem[T2]{semiad}
{\sc X. Tolsa},
{\em Painlev\'e's problem and the semiadditivity of analytic capacity,}
Acta Math. {\bf 190} (2003), no.~1, 105--149.

\bibitem[T3]{semiad2}
{\sc X. Tolsa},
{\em The semiadditivity of continuous analytic capacity and the inner boundary conjecture,}
Amer. J. Math. {\bf 126} (2004), 523--567.

\bibitem[T4]{bilipschitz}
{\sc X. Tolsa},
{\em Bilipschitz maps, analytic capacity and the Cauchy integral,} 
Ann. of Math.~(2) {\bf 162}  (2005),  no.~3, 1243--1304.

\bibitem[T5]{tolsacz} 
{\sc X. Tolsa}, 
{\em $L\sp 2$ boundedness of the Cauchy transform implies $L\sp 2$ boundedness of all Calder\'on-Zygmund operators associated to odd kernels,}
Publ. Mat. 48 (2004), no. 2, 445--479.

\bibitem[U]{uy} 
{\sc N. X. Uy}, 
{\em Removable sets of analytic functions satisfying a Lipschitz condition},
Ark. Mat. 17 (1979), 19--27.

\bibitem[V]{verdera}
{\sc J. Verdera}, 
{\em Removability, Capacity and approximation}, 
Complex potential theory, NATO ASI Series, Kluwer Academic Publ., Dordrecht, 1994, 419--473.

\bibitem[Vi]{Vi}
{\sc A. G. Vitushkin},
{\em The analytic capacity of sets in problems of approximation theory},
Math. Surveys Monographs {\bf 22} (1967), 139--200.



\end{thebibliography}
\end{document}